\numberwithin{equation}{section}
\DeclareSymbolFont{tipa}{T3}{cmr}{m}{n}
\DeclareMathAccent{\invbreve}{\mathalpha}{tipa}{16}
\def\sideremark#1{\ifvmode\leavevmode\fi\vadjust{\vbox to0pt{\vss 
    \hbox to 0pt{\hskip\hsize\hskip1em           
 \vbox{\hsize3.5cm\tiny\raggedright\pretolerance10000
 \noindent #1\hfill}\hss}\vbox to8pt{\vfil}\vss}}}%
\newcommand{\Cal}{\mathcal}
\newcommand{\re}{\operatorname{Re}}
\renewcommand{\i}{\mathrm{i}\mkern.25\thinmuskip}
\renewcommand{\d}{\mathrm{d}\mkern.25\thinmuskip}
\newcommand{\e}{\varepsilon}
\newcommand{\esslimsup}{\text{ess}\limsup}
\newcommand{\dist}{\operatorname{dist}}
\newcommand{\var}{\operatorname{var}}
\newcommand{\sH}{\mathscr{H}}
\newcommand{\E}{\mathrm{e}}
\newcommand\equidistantPoints[4]{%
    \draw[#1] (0, 0) rectangle (#2, #2);
    \pgfmathsetmacro{\scale}{#4}
    \pgfmathsetmacro{\step}{#2 / (#3 + 1)}
    \pgfmathsetmacro{\last}{#3-1}
    \foreach \i in {0,...,\last} {
      \foreach \j in {0,...,\last} {
        \fill[#1] (\i*\step+\step, \j*\step+\step) circle [radius=0.03/\scale];
      }
    }
}
\newtheorem{theorem}{Theorem}[section]
\newtheorem{lemma}[theorem]{Lemma}
\newtheorem{proposition}[theorem]{Proposition}
\newtheorem{corollary}[theorem]{Corollary}
\theoremstyle{definition}
\newtheorem{definition}[theorem]{Definition}
\newtheorem{remark}[theorem]{Remark}
\newtheorem{examp}[theorem]{Example}
\newcommand{\ov}[1]{\overline{#1}}
\newcommand{\un}[1]{\underline{#1}}
\renewcommand{\o}{\omega}
\newcommand{\pa}{\partial}
\newcommand{\I}{\mathbbm{i}}
\newcommand{\iy}{\infty}
\newcommand{\stq}{\subseteq}
\newcommand{\R}{\mathbb{R}}
\newcommand{\N}{\mathbb{N}}
\newcommand{\Z}{\mathbb{Z}}
\newcommand{\C}{\mathbb{C}}
\newcommand{\eS}{\mathbb{S}}
\newcommand{\dl}{\delta}
\newcommand{\reach}{\operatorname{reach}}
\newcommand{\frk}{\mathfrak}
\newcommand{\nn}{\nonumber}
\newcommand{\1}{\mathbbm{1}}
\newcommand{\z}{\zeta}
\newcommand{\eps}{\varepsilon}	 
\DeclareMathOperator{\diam}{diam}
\DeclareMathOperator{\Unp}{Unp}
\newcommand{\fb}{\beta}
\newcommand{\fbv}{\beta^{\var}}
\newcommand{\BC}{\mathcal{M}}
\newcommand{\SC}{\mathcal{S}}
\newcommand{\FC}{\mathcal{C}}
\newcommand{\uSC}{\overline{\mathcal{S}}}
\newcommand{\lSC}{\underline{\mathcal{S}}}
\newcommand{\uBC}{\overline{\mathcal{M}}}
\newcommand{\lBC}{\underline{\mathcal{M}}}
\newcommand{\uFC}{\overline{\mathcal{C}}}
\newcommand{\lFC}{\underline{\mathcal{C}}}
\newcommand{\ubex}{\overline{\mathfrak{m}}}
\newcommand{\lbex}{\underline{\mathfrak{m}}}
\newcommand{\bex}{\mathfrak{m}}
\newcommand{\ucex}{\overline{\mathfrak{c}}}
\newcommand{\lcex}{\underline{\mathfrak{c}}}
\newcommand{\cex}{\mathfrak{c}}
\newcommand{\usex}{\overline{\mathfrak{s}}}
\newcommand{\lsex}{\underline{\mathfrak{s}}}
\newcommand{\sex}{\mathfrak{s}}
\newcommand{\udimout}{\ov{\dim}^{\mathrm{out}}}
\newcommand{\ldimout}{\underline{\dim}^{\mathrm{out}}}
\newcommand{\dimout}{\dim^{\mathrm{out}}}
\newcommand{\udim}{\ov{\dim}}
\newcommand{\ldim}{\underline{\dim}}
\newcommand{\uS}{\overline{\mathcal{S}}}
\newcommand{\Minkout}[1]{\mathcal{M}^{{\rm out},#1}}
\newcommand{\uMinkout}[1]{\ov{\mathcal{M}}^{{\rm out},#1}}
\newcommand{\lMinkout}[1]{\underline{\mathcal{M}}^{{\rm out},#1}}
\title[Support measures in fractal geometry]{Support measures in fractal geometry}
\author[G.\ Radunovi\'c]{Goran Radunovi\'c}
\address{University of Zagreb, Faculty of Science, Department of Mathematics, Bijenička cesta 30, 10000 Zagreb, Croatia.}
\email{\href{mailto:goran.radunovic@math.hr}{goran.radunovic@math.hr}}
\author[S.\ Winter]{Steffen Winter}
\address{Karlsruhe Institute of Technology, Department of Mathematics, Englerstr. 2, 76131 Karlsruhe, Germany.}
\email{\href{mailto:steffen.winter@kit.edu}{steffen.winter@kit.edu}}
\date{\today}
\begin{document}

\subjclass[2020]{28A80, 28A75 (Primary); 28A12, 53A55, 53C65 (Secondary)}

\keywords{Minkowski dimension, Minkowski content, support measures, fractal curvature measures, Steiner formula, fractal zeta function, distance zeta function, complex dimensions, fractal string, fractal tube formula, basic content, support content, basic scaling exponent, support scaling exponent, outer box dimension}
\thanks{Goran Radunovi\' c is supported by the Croatian Science Foundation grant IP-2022-10-9820 and Horizon grant  101183111-DSYREKI-HORIZON-MSCA-2023-SE-01.
Steffen Winter acknowledges funding from DFG grant 433621248.}

\begin{abstract}
We introduce two novel families of geometric functionals---basic contents and support contents---for investigating the fractal properties of compact subsets in Euclidean space. These functionals are derived from the support measures arising in connection with the general Steiner formula due to Hug, Last, and Weil, and offer new tools for extracting geometric information beyond classical fractal dimensions. The basic contents are constructed from the support measures of the set itself, while the support contents arise from those of its parallel sets. Associated scaling exponents characterize the asymptotic behavior of these measures as the resolution parameter tends to zero. We establish a fundamental connection between the maximum of the basic scaling exponents and the outer Minkowski dimension. The proof relies on the novel notion of outer box dimension. 
Furthermore, we explore how support contents supply aggregated information on geometric features that is provided separately by the basic contents, and how they relate to fractal curvatures and complex dimensions. Our results provide a deeper understanding of which geometric aspects actually contribute to fractal invariants such as the (outer) Minkowski content. We provide some illustrative examples.

\end{abstract}

\maketitle

{\small \tableofcontents}

\section{Introduction}

In this paper we use the support measures of \cite{HugLasWeil} to study the fractal properties of general compact sets in $\R^d$. We will introduce two families of geometric quantities associated to compact sets and called \emph{basic contents} and \emph{support contents},
which we propose as tools to extract geometric information encoded in the general Steiner formula of \cite{HugLasWeil}, such as the fractal dimension (but also much more).

Fractal dimensions provide only very rough geometric information about a set. In the worst case, the assigned  value is an integer, as for the devil's staircase, i.e., the graph of the Cantor function, having all common fractal dimensions equal to 1. In such situations the dimension is not even able to distinguish fractals from `classical sets'. In any case, it is evident that a single numerical value is insufficient to capture all relevant geometric information. Accordingly, there is a need to identify additional quantities that convey essential geometric information about a given (fractal) set. Various functionals of this kind have been proposed and studied, with some proving to be particularly useful.
The  Minkowski content, proposed as a measure of `lacunarity' by Mandelbrot \cite{Ma94-lacun}, has found many applications see \cite{Ba13,DEMES13,DiIeva24}.
Beside Minkowski contents several (mostly non-rigorous) concepts have been suggested and used in the applied mathematics literature to extract further geometric information from fractal structures, see e.g.\ \cite{bruno2012shape,CaGa13,DEMES13},
showing the urgent need for further geometric descriptors.

Fractal curvature measures, based on singular curvature theory and defined via approximation by parallel sets (i.e., $\e$-neighborhoods) provide a rigorous framework for further geometric invariants \cite{Wi08,RZ19,spodarev2015estimation}.
The theory of complex dimensions \cite{FZF,lapidus2018fractal,lapidus2017distance,LF12} generalizes  Minkowski dimension and content in a different way. It encodes the geometry of a set by assigning to it a certain set of `dimensions' which are complex numbers defined as the poles of appropriate fractal zeta functions. This theory has proved useful in number theory \cite{LF12,LaPom,Lap08,LapHer21}  and e.g.\ for the detection of the formal and analytic class of dynamical systems \cite{MRR2022,klimes2021reading}, for which also some complexified generalizations of the Minkowski content play a role \cite{Res2013,Res2014}.
Another field of possible application of such geometric functionals is spectral theory. The spectral properties of operators heavily depend on the geometry of their domains. For classic domains (like smooth manifolds or domains with smooth boundary) curvature is known to play an essential role, see e.g.\ \cite{VAN_DEN_BERG-1994,MCKEAN-1967}. But what are relevant geometric quantities that capture this dependence for domains with fractal boundary? Counterexamples to the modified Weyl-Berry conjecture have made clear that it is not the Minkowski content alone, cf.~\cite{CaBr86,LaPo96}.

All the mentioned approaches have in common that they study, how a (fractal) set is embedded in the ambient space $\R^d$. The results in this paper are also along these lines and so, naturally, one should expect relations to notions such as Minkowski content, fractal curvatures and complex dimensions.
We introduce new geometric functionals to fractal geometry which are derived from the support measures of Hug, Last and Weil \cite{HugLasWeil}. Based on earlier work of Stach\'o \cite{Stacho79}, in \cite{HugLasWeil}, a general Steiner-type formula was obtained for arbitrary closed sets in $\R^d$. It expresses the (local) parallel volume of a closed set in terms of its support measures, which are signed measures living on the generalized normal bundle of the set and may be regarded as non-additive generalizations of curvature measures; see Section \ref{sec:pre} for details. 
Support measures encode a lot of geometric information, but unfortunately this information is not so easily accessible in the case of fractal sets, e.g.\ it is not clear how to read off the Minkowski dimension of the set. Although support measures have various applications, e.g.\ in stochastic geometry \cite{HugLasWeil,HugLast00,Vi25} or geometric measure theory \cite{HugSan2022}, up to now they have not seriously been used in fractal geometry.

Our aim is to utilize the support measures to study fractal properties of compact sets. We will introduce two families of geometric functionals, which we call \emph{basic contents} and \emph{support contents}, along with associated scaling exponents.
The first sequence is based on the support measures of the set itself, while the second one will be based on the support measures of its parallel sets. 

The general strategy is to take only geometric features into account up to some detail level $\e>0$ and then let $\e$ tend to $0$, revealing more and more of the geometric information about a given compact set $A\subset\R^d$. For the basic contents this is achieved by looking at those points in the generalized normal bundle $N(A)$ of $A$ which have local reach at least $\e$; see \eqref{eq:normal-bundle} and \eqref{eq:reach-def} for the definitions. 
For each $\e>0$, the restriction of the support measure $\mu_i(A;\cdot)$ (for $i=0,\ldots,d-1$) to this set is a signed measure. The scaling properties of the associated total variation measures as $\e\to 0^+$ lead to the basic scaling exponents $\bex_i(A)$ and basic contents $\BC^q_i(A)$, see Definition~\ref{def:reach-measure}. Hence this approach captures how the support measures of $A$ themselves scale. It turns out that in general one has to work with upper and lower exponents $\lbex_i$ and $\ubex_i$ just as in the case of the Minkowski dimension. 

Our main results about the basic scaling exponents are summarized in Theorem~\ref{thm:basic-exp}. In particular, we determine the possible range for each $\bex_i$ (namely, $i\leq\lbex_i(A)\leq\ubex_i(A)\leq d$, provided $\mu_i(A;\cdot)\not\equiv 0$). Moreover, we establish a general and nontrivial connection to the \emph{outer Minkowski dimension} (the definition of which is recalled in \eqref{eq:dimout-M-def}): for any compact set $A\subset\R^d$, the (upper) outer Minkowski dimension is given by the maximum of the (upper) basic scaling exponents, that is,
$$
\udimout_M A =\max_{i\in \{0,\ldots,d-1\}} \ubex_i(A).
$$
The outer Minkowski dimension turns out to be just the right notion here, allowing the most general formulation. The proof of this relation is rather involved. As an essential tool along the way, we introduce the notion of {\em outer box dimension}, which is novel and might be of independent interest; see equation \eqref{eq:dimout-box-def} for the definition. 
We establish that, just as its classical counterpart, the box dimension, it can equivalently be characterized in terms of packings (rather than coverings), cf.\ Lemma~\ref{prop:outer-box-vs-Mink}, and explore the connection to the outer Minkowski dimension. The upper dimensions coincide (just as their classical counterparts), but the relation between the lower dimensions is more subtle, see Proposition~\ref{prop:outer-box-vs-Mink}.
The analysis of basic exponents is completed by a discussion of their scaling properties, see Propostion~\ref{prop:scale-basic}.

The second family of geometric functionals, the \emph{support contents}, arise from looking at the $\e$-parallel sets $A_{\e}$ (i.e., $\e$-neighborhoods)  of the given compact set $A\subset\R^d$. The support measures of $A_\e$ are also well-defined for any $\e>0$ and connected to the support measures of $A$ via a Steiner-type formula, which we review in Section~\ref{sec:steiner} (see in particular \eqref{eq:mu-parallel-local}). The scaling behavior of the total variations of the support measures $\mu_i(A_\e;\cdot)$ as $\e\to 0^+$ is captured in the (lower and upper) support scaling exponents $\lsex_i(A)$ and $\usex_i(A)$, see Definition~\ref{def:support-content}. Once determined, these exponents tell us how to rescale the support measures (of the parallel sets) in order to obtain meaningful limits, called \emph{support contents}.
In Theorem~\ref{thm:support-exp}, some general relations between basic exponents and support scaling exponents are summarized. It turns out that support contents and the associated scaling exponents provide more aggregated information compared to the basic exponents. If no cancellations occur, then the exponent $\sex_i$ is given by the maximum of the exponents $\bex_j$, $j\leq i$. In particular, the exponent $\sex_{d-1}$ detects the maximal basic exponent, and therefore it coincides with the outer Minkowski dimension. If the corresponding contents exist, then the $i$-th support content of a set $A\subset\R^d$ may be represented  as a sum of its basic contents, see Theorem~\ref{thm:SC-BC-rel}. In particular, the outer Minkowski content of $A$ (which can be related to the $(d-1)$-st support content)  
is given by
\begin{align*}
    \Minkout{D}(A)=\frac{1}{d-D}\sum_{j=0}^{d-1}\omega_{d-j}\mathcal{M}_j^D(A),
\end{align*}
where $D$ is the outer Minkowski dimension of $A$ and $\omega_{d-j}$ is a positive constant defined in \eqref{eq:kappa-and-omega}.
These formulas help to understand which `geometric features' of a fractal set do actually contribute to its Minkowski dimension or content. For instance, depending on the relation between the basic exponents $\bex_0(A)$ and $\bex_1(A)$ of a subset $A\subset\R^2$, either the 0-dimensional or the 1-dimensional `features' of the set (or both) may determine the outer Minkowski dimension and content; see Remark~\ref{rem:frac-ex} and the examples in Section~\ref{sec:ex} for such interpretation of our results.

Support contents also allow to draw some connections to fractal curvatures, see Definition~\ref{def:fr-curv} and the subsequent discussion.  There are also deep connections with the theory of complex dimensions and fractal tube formulas \cite{LF12,FZF}, which are established in a follow-up paper \cite{RaWi2}. Here we only give a brief outlook, see page \pageref{page:zeta} in Section~\ref{sec:ex}.
The special case of subsets of $\R$ is addressed in Section \ref{sec:R1}. Originally, the general Steiner formula of \cite{HugLasWeil} is only stated for subsets of $\R^d$ for $d\geq 2$, but it can be extended to the case $d=1$; see Theorem \ref{thm:stein1}. It allows to completely characterize the differentiability of the parallel volume, see Corollary \ref{cor:diffR1}, and to establish a direct connection to the theory of fractal strings of \cite{LF12}. In essence, the geometric counting function of a fractal string may be viewed as the basic function of the associated set.

We remark that recently support measures have been generalized for sets in non-Euclidean spaces in \cite{HugSan2022}. The scaling exponents and contents discussed here naturally generalize to such setting, but we restrict to the Euclidean case here.
Specifically, the support measures $\mu_i(A;\cdot)$ can be restricted in their second argument to suitable subsets of the normal bundle, thereby enabling the investigation of local geometric properties of the set $A$, in the spirit of \cite{WiZa13, Wi19}. 

{\bf Outline.} The structure of the paper is as follows. In Section~\ref{sec:pre} we recall the definition and some properties of support measures. In Section \ref{sec:basic}, basic scaling exponents and basic contents are introduced and some properties are stated, including the main result Theorem~\ref{thm:basic-exp}. Sections \ref{sec:4} and \ref{sec:steiner} are devoted to its proof. In Section \ref{sec:4}, this proof is provided up to a general estimate stated in Proposition~\ref{prop:basic-est}, which is only proved in Section \ref{sec:steiner} after some further preparations. On the way, the outer box dimension is discussed in Section \ref{sec:4}, see p.~\pageref{page:dimout-box}, while in Section \ref{sec:steiner} also some auxiliary results for Section \ref{sec:support-funct} are provided. In the latter, support contents and support scaling exponents are introduced and connections to basic contents and basic scaling exponents are discussed, as well as to the outer Minkowski content and dimension, see in particular Theorems~\ref{thm:support-exp} and \ref{thm:SC-BC-rel}. At the end of this section, the connection to fractal curvatures is addressed.

Section \ref{sec:ex} is devoted to examples, demonstrating how 
basic (and support) scaling exponents are computed and interpreted. In particular it is shown that the basic exponents are able to distinguish different geometric behavior. 
We also give an outlook to the connection with the theory of fractal zeta functions, see p.~\pageref{page:zeta}.
Finally, in Section~\ref{sec:R1} subsets of $\R$ are discussed.

\section{Preliminaries}
\label{sec:pre}

Let $d\in\N$, $d\geq 2$ and let $I_d:=\{0,\ldots,d-1\}$ denote the standard set of indices that we will often use in this paper.
For any $k\in\N$, let
\begin{align}
  \label{eq:kappa-and-omega}
  \kappa_k&:=\frac{\pi^{k/2}}{\Gamma(k/2+1)}  &&\text{ and } & \o_k&:=k\kappa_k
\end{align}
denote the volume and the surface area, respectively, of the $k$-dimensional unit ball in $\R^k$. Here $\Gamma$ is Euler's gamma function.
We write $|\cdot|$ for the Euclidean norm and $\dist(x,A):=\inf\{|x-a|:a\in A\}$ for the (Euclidean) distance of a point $x\in\R^d$ to a set $A\subseteq\R^d$. We denote by $\eS^{d-1}:=\{y\in\R^d:|y|=1\}$ the unit sphere in $\R^d$.

For any compact set $A\stq\R^d$ and $\eps>0$, let
$$
A_\e:=\{x\in\R^d:\dist(x,A)\leq\e\}
$$
denote the (closed) $\e$-\emph{parallel set} or $\e$-\emph{neighborhood} of $A$.

We recall the general Steiner formula from Hug, Last and Weil \cite{HugLasWeil}, which allows, in particular, to express the parallel volume of any closed set $A\subset\R^d$. It is based on earlier work of Stach\'o \cite{Stacho79}. For this purpose let $A\subset\R^d$ be a closed set and let $\Unp(A)$ be the set of all points $y\in \R^d\setminus A$ which have a unique nearest point in $A$. Let $\pi_A(y)$ denote this nearest point. It is called the \emph{metric projection} of $y$ to $A$. Also, the mapping $\pi_A:\Unp(A)\to A$, $y\mapsto \pi_A(y)$ is called \emph{metric projection} onto $A$. The \emph{generalized normal bundle} of $A$ is the set
\begin{align}
  \label{eq:normal-bundle}
  N(A):=\left\{\left(\pi_A(y),\tfrac{y-\pi_A(y)}{\dist(y,A)}\right):y\in\Unp(A)\setminus A\right\}.
\end{align}
Clearly, $N(A)\subset \pa A\times \eS^{d-1}$, that is, any pair $(x,u)\in N(A)$ consists of a foot point $x\in \pa A$ and a unit vector $u\in\eS^{d-1}$ pointing `away' from $A$.
We also define the \emph{generalized metric projection} of $A$ as the mapping
$$
\Pi_A:\Unp(A)\setminus A\to N(A),\quad y\mapsto \left(\pi_A(y),\tfrac{y-\pi_A(y)}{\dist(y,A)}\right).
$$

 The \emph{reach function} (or \emph{local reach}) $\dl(A, \cdot):N(A)\to[0,\infty]$ of $A$ is defined by
 \begin{align} \label{eq:reach-def}
    \dl(A,x,u):=\sup\{t\geq 0: \pi_A(x+tu)=x\}.
 \end{align}
It may be extended to all of $\R^d \times \eS^{d-1} $ by setting $\dl(A, x, u) := 0$ for $(x, u) \notin N(A)$. Note that $\dl(A, \cdot) > 0$ on $N(A)$ and that $\dl(A,\cdot)$ is measurable, see \cite[Lemma 6.2]{HugLasWeil}. If $A$ is convex, then $\dl(A,\cdot)\equiv \infty$.  The \emph{reach} of $A$ is the number $\reach(A):=\inf\{\dl(A,x,u):(x,u)\in N(A)\}$ and $A$ is called a set of \emph{positive reach}, if $\reach(A)>0$.

For any signed measure $\mu$, we denote by $\mu^+$, $\mu^-$ and $|\mu|:=\mu^+ +\mu^-$, respectively, the positive, negative and total variation measure of $\mu$.

\begin{theorem}[General Steiner formula {\cite[Thm.~2.1]{HugLasWeil}}]
	\label{thm:generalSteiner}
  For any nonempty closed set $A\subset\R^d$, there exist signed measures $\mu_0(A;\cdot),\ldots,\mu_{d-1}(A;\cdot)$ on $N(A)$, called the \emph{support measures} of $A$, satisfying
  \begin{align}
   \int_{N(A)}\1\{x\in B\}\min\{r,\dl(A,x,u)\}^{d-i}|\mu_i|(A;\d(x,u))<\infty,
  \end{align}
  for $i=0,\ldots,d-1$, any compact set $B\subset\R^d$ and any $r>0$, such  that, for any measurable function $f:\R^d\to\R$ with compact support,
   \begin{align}
	\label{eq:generalSteiner}
\int_{\R^d\setminus A} f(x) \d x &
=\sum_{i=0}^{d-1}\o_{d-i}\int_0^{\infty}
\int_{N(A)}t^{d-i-1}\1{\{t<\dl(A,x,u)\}}
f(x+tu)\mu_i(A;\d(x,u))\d t.
\end{align}
\end{theorem}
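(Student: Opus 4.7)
The plan is to follow the classical tube-formula strategy, suitably extended to arbitrary closed sets. The starting observation is that the map
\begin{align*}
\Phi\colon\{(x,u,t)\in N(A)\times(0,\iy):t<\dl(A,x,u)\}\tend \Unp(A)\sts A,\quad (x,u,t)\mapsto x+tu,
\end{align*}
is a bijection, and that the \emph{exoskeleton} $\R^d\sts(A\cup\Unp(A))$ of points without a unique foot in $A$ has Lebesgue measure zero (a classical fact going back to Erd\H os and Motzkin). Combining these two observations, the left-hand side of \eqref{eq:generalSteiner} may be rewritten as an integral over the domain of $\Phi$, provided a suitable area/coarea formula is in place.

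To make this rigorous I would invoke the structural result, refining Stach\'o, that $N(A)$ is a countably $(d-1)$-rectifiable subset of $\R^d\times\eS^{d-1}$. This equips $N(A)$ with the natural measure $\sH^{d-1}$ together with an approximate tangent space at $\sH^{d-1}$-a.e.\ $(x,u)$. Adapting Z\"ahle's framework for sets of positive reach, one constructs at $\sH^{d-1}$-almost every $(x,u)\in N(A)$ generalized principal curvatures $k_1,\ldots,k_{d-1}\in\R\cup\{\iy\}$ together with an orthonormal system of principal directions. The local positivity of $\dl(A,\cdot)$ is the handle: on each sublevel set $\{\dl(A,\cdot)\geq\e\}$ the set $A$ behaves, after an inward offset, like a set of positive reach, to which Federer's and Z\"ahle's theory applies directly; letting $\e\to 0^+$ then extends the curvature data to $\sH^{d-1}$-almost all of $N(A)$.

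The heart of the argument is the Jacobian computation. Parametrizing by principal angles $\alpha_j(x,u)\in[0,\pi/2]$ with $\tan\alpha_j=k_j$, the $d$-dimensional Jacobian of $\Phi$ with respect to $\sH^{d-1}\otimes\sL^1$ factors as
\begin{align*}
J_\Phi(x,u,t)=\prod_{j=1}^{d-1}\bigl(\cos\alpha_j(x,u)+t\sin\alpha_j(x,u)\bigr),
\end{align*}
which, expanded as a polynomial of degree $d-1$ in $t$, rearranges into
\begin{align*}
J_\Phi(x,u,t)=\sum_{i=0}^{d-1}t^{d-1-i}h_i(x,u),\quad h_i(x,u):=\sum_{|S|=d-1-i}\prod_{j\in S}\sin\alpha_j(x,u)\prod_{j\notin S}\cos\alpha_j(x,u).
\end{align*}
Setting $\mu_i(A;B):=\o_{d-i}^{-1}\int_B h_i(x,u)\,\sH^{d-1}(\d(x,u))$ and substituting into the area formula $\int_{\Unp(A)\sts A}f(y)\,\d y=\int f\circ\Phi\cdot J_\Phi\,\d(\sH^{d-1}\otimes\sL^1)$ yields exactly \eqref{eq:generalSteiner}. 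The stated integrability condition then encodes the local $L^1$-bound on $h_i$ against the truncation $t<\min\{r,\dl(A,x,u)\}$, which follows from compactness of $\supp f$ and from finiteness of the $\sH^{d-1}$-mass of $\{(x,u)\in N(A):\dl(A,x,u)\geq\e,\,x\in B\}$ for each compact $B\st\R^d$.

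The principal obstacle is the regularity theory on $N(A)$: namely, establishing the existence of an approximate second fundamental form and the Jacobian identity $\sH^{d-1}$-a.e.\ on the normal bundle of a set with \emph{no} intrinsic regularity. Once this geometric-measure-theoretic backbone is in place (which is essentially the technical core of \cite{HugLasWeil}), the binomial-type expansion above, the definition of the signed measures $\mu_i(A;\cdot)$, and the formula \eqref{eq:generalSteiner} itself are a routine consequence.
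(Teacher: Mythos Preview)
The paper does not prove this theorem at all: it is stated in the preliminaries section with an explicit citation to \cite[Thm.~2.1]{HugLasWeil} and then used as a black box throughout. So there is no ``paper's own proof'' to compare against.

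That said, your sketch is a reasonable outline of the actual argument in \cite{HugLasWeil}: the bijectivity of $\Phi$ onto $\Unp(A)\setminus A$, the Lebesgue nullity of the exoskeleton, the countable $(d-1)$-rectifiability of $N(A)$, the construction of generalized principal curvatures via approximation by parallel sets (which do have positive reach), the Jacobian expansion in elementary symmetric functions, and the resulting integral representation of the $\mu_i$. You correctly identify that the genuine work lies in the regularity theory on $N(A)$ for an arbitrary closed set, and that once this is established the formula follows from the area formula and a polynomial expansion. One small correction: your index convention in the expansion has $h_i$ paired with $t^{d-1-i}$, which matches the paper's $H_{d-1-i}$ rather than $H_i$; this is harmless but worth keeping straight when comparing with \eqref{eq:support-formula}.
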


Choosing $f=\1_{A_\e}$ in \eqref{eq:generalSteiner}, one gets in particular an expression for the $\e$-parallel volume of $A$ for any $\e>0$, see also \cite[\S4.3]{HugLasWeil},
\begin{equation}
	\label{eq:v-par}
	V(A_\e\setminus A)=\sum_{i=0}^{d-1}\o_{d-i}\int_0^{\e}t^{d-i-1}\int_{N(A)}\1{\{t<\dl(A,x,u)\}}\mu_i(A;\d(x,u))\d t.
\end{equation}
Here and throughout $V(C):=\mathscr{L}^d(C)$ denotes the $d$-dimensional volume or Lebesgue measure of a set $C\subset\R^d$.

Recall from \cite[Corollary 2.5]{HugLasWeil} that the \emph{$i$-th support measure} of a closed set $A\subset\R^d$ is given explicitly by
\begin{equation}\label{eq:support-formula}
	\mu_i(A;\cdot)=\frac{1}{\omega_{d-i}}\int_{N(A)}\1{\{(x,u)\in\cdot\}}H_{d-1-i}(A,x,u)\sH^{d-1}(\d (x,u)),
\end{equation}
where $\sH^{d-1}$ is the $(d-1)$-dimensional Hausdorff measure on the (generalized) normal bundle $N(A)$.\footnote{We use the convention that the Hausdorff measure is appropriately normalized so that $\mathscr{H}^k([0,1]^k)=1$ for any integer $k$.}
Furthermore, for any $j\in I_d$, the elementary symmetric function $H_{j}$ of $A$ is defined (for $\sH^{d-1}$-almost all $(x,u)\in N(A)$) by
\begin{align} \label{eq:Hj}
    H_j(A,x,u):=\frac{\sum_{|I|=j}\prod_{l\in I}k_l(A,x,u)}{\prod_{i=1}^{d-1}\sqrt{1+k_i(A,x,u)^2}},
\end{align}
see e.g.\ \cite[Eq.\ (2.13)]{HugLasWeil}, where $k_1(A,x,u),\ldots,k_{d-1}(A,x,u)$ denote the {\em generalized principal curvatures} of $A$ at $(x,u)\in N(A)$.
They were initially defined for sets of positive reach in \cite{zahle86} (see also \cite[\S 4.4]{RZ19}) and then the definition was extended in \cite{HugLasWeil} to arbitrary nonempty closed subsets of $A\subset \R^d$ (for which they are well-defined for $\sH^{d-1}$-a.a.\ $(x,u)\in N(A)$).

We recall some properties of the support measures, see also \cite[\S 4.4]{HugLasWeil}. These measures are \emph{locally defined}, meaning that, whenever $A_1\cap U=A_2\cap U$ for some closed sets $A_1,A_2\subset\R^d$ and some open set $U\subset\R^d$, then
\begin{align} \label{eq:loc-defined}
  \mu_i(A_1;B)=\mu_i(A_2;B)
\end{align}
holds for all Borel sets $B\subset U\times \eS^{d-1}$ and any $i\in I_d$.
Moreover, support measures are \emph{motion covariant}: for any $i\in I_d$, any closed set $A\subset\R^d$ and any rigid motion $g\in\R^d$  with rotational part $\hat g\in SO_d$ and translational part  $b\in\R^d$ (such that $g(x)=\hat g(x)+b$, $x\in\R^d$),
\begin{align} \label{eq:motion-cov}
  \mu_i(g(A); g(B))=\mu_i(A;B)
\end{align}
holds for any Borel set $B\subset \R^d\times \eS^{d-1}$, where $g(B):=\{(g(x),\hat g(u)): (x,u)\in B\}$. Moreover, for any $j\in I_d$ the $j$-th support measure is \emph{homogeneous of degree $j$}: for any closed set $A\subset\R^d$ and any scaling factor $\lambda>0$,
 \begin{align} \label{eq:homogen}
  \mu_j(\lambda A; \lambda B)=\lambda^j \mu_j(A;B),
\end{align}
holds for any Borel set $B\subset \R^d\times \eS^{d-1}$, where $\lambda B:=\{(\lambda x,u): (x,u)\in B\}$.

 Let us briefly recall some connections with (generalized) curvature measures. For any compact set $A\subset\R^d$ with positive reach (including any compact convex set), curvature measures $C_0(A,\cdot),\ldots, C_{d-1}(A,\cdot)$ are well-defined by the local Steiner formula due to Federer \cite{Fed59} and they coincide with the corresponding support measures, i.e., $C_i(A,\cdot)=\mu_i(A;\cdot)$ for $i\in I_d$. 
 \label{page:curv}
 Thus, support measures are a generalization of Federer's curvature measures to arbitrary closed sets. Note that they are a \emph{non-additive} extension of these curvature measures. Other (additive) extensions exist, in particular to the class of UPR-sets. It consists of sets which can be represented as locally finite unions of sets with positive reach such that their finite intersections also have positive reach. A subclass is the extended convex ring ECR, which consists of all subsets of $\R^d$ that can locally be represented by finite unions of compact convex sets. Curvature measures can be extended additively to UPR-sets, meaning that these measures are well-defined for any such set and are additive:
\begin{align} \label{eq:curv:additive}
  C_i(A_1\cup A_2, \cdot)=C_i(A_1,\cdot)+C_i(A_2,\cdot)-C_i(A_1\cap A_2,\cdot)
\end{align}
holds, whenever $A_1,A_2,A_1\cup A_2,A_1\cap A_2$ are UPR-sets, see \cite[\S 5.2 and in particular Cor.~5.15]{RZ19}.

Also generalized curvature measures live on a subset of $\partial A\times \eS^{d-1}$ but in general their support may be larger than $N(A)$. If $A\subset\R^d$ has positive reach, then $\mu_i(A;\cdot)=C_i(A,\cdot)$ for any $i\in I_d$. The general relation between these measures is as follows. For any UPR-set $A\subset\R^d$ and any $i\in I_d$,
\begin{align} \label{eq:mu-C-rel}
  \mu_i(A;\cdot)=C_i(A;N(A)\cap\cdot).
\end{align}
For $A$ in the extended convex ring this is stated in ~\cite[eq.~(3.1)]{HugLasWeil}, see also \cite[Thm.~3.3]{HugLast00}. In general, it follows by comparing the integral representations of $\mu_i(A;\cdot)$ (see \eqref{eq:support-formula}) and $C_i(A,\cdot)$ (see e.g.~\cite[Thm.~5.18]{RZ19}) and noting that the index function appearing in the latter representation equals 1 for almost all $(x,n)\in N(A)$, cf.~\cite[Thm.~5.6]{RZ19}.
Hence, in general, curvature measures may contain strictly more geometric information about a set $A$ than support measures. This is partially compensated by the information encoded in the reach function appearing in formulas involving support measures.

Note that generalized curvature measures (whenever defined) share with the corresponding support measures the property of being locally defined, the motion covariance and the homogeneity, cf.~\eqref{eq:loc-defined}, \eqref{eq:motion-cov} and \eqref{eq:homogen}.
For any compact set $A\subset\R^d$ admitting curvature measures, the total masses
\begin{align}
  \label{eq:total-curv}
  C_i(A):=C_i(A,\R^d\times\eS^{d-1})
\end{align} of the curvature measures $C_i(A,\cdot)$, $i\in I_d$, are known as the \emph{total (Lipschitz-Killing) curvatures} of $A$ or, in convex geometry, as \emph{intrinsic volumes}. For any compact set $A\subset\R^d$ with positive reach, one has $C_i(A)=\mu_i(A;\R^d\times\eS^{d-1})$.

\section{Basic content and basic scaling exponents}
\label{sec:basic}

In this section, we introduce the notions of \emph{basic contents} and the associated \emph{basic scaling exponents} for an arbitrary compact set in $\R^d$. They are defined in terms of the \emph{basic functions}, which we discuss first. Then our focus will be on the properties of the basic exponents, which are summarized in Theorem~\ref{thm:basic-exp} and Proposition~\ref{prop:scale-basic}.
Throughout this section---and the remainder of the paper---$A$ will denote a nonempty compact subset of $\R^d$.

\begin{proposition}[Basic functions of a compact set] \label{def:basic}
   For any $i\in I_d$, we define the {\em $i$-th  basic function} of a nonempty compact set $A\subset\R^d$ by
\begin{equation}
	\label{eq:m_i}
	{\fb}_i(t):=\fb_i(A;t):=\int_{N(A)}\1{\{t<\dl(A,x,u)\}}\mu_i(A;\d(x,u)), \quad t>0,
\end{equation}
and we denote by $\fbv_i$ its total variation analog, i.e.,
\begin{equation}
	\label{eq:m_i-tot}
	\fbv_i(t):=\fbv_i(A;t):=\int_{N(A)}\1{\{t<\dl(A,x,u)\}}|\mu_i|(A;\d(x,u)), \quad t>0.
\end{equation}
	\label{lem:parallel-supp-finite}
	For each $i\in I_d$, 
$\fb_i$ and $\fbv_i$ are real-valued functions, i.e., $|\fbv_i(t)|<\infty$ and $|\fb_i(t)|<\infty$ for any  $t>0$.
	Furthermore, $\fbv_i$ is nonnegative, monotonically decreasing and right continuous.
\end{proposition}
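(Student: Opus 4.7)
The plan is to derive all four claims directly from the integrability assertion built into the general Steiner formula (Theorem~\ref{thm:generalSteiner}), combined with elementary set-inclusion monotonicity and the monotone convergence theorem. Since $A$ is compact, $N(A)\subset\partial A\times\eS^{d-1}\subset A\times\eS^{d-1}$, so taking $B:=A$ in Theorem~\ref{thm:generalSteiner} provides a compact set on which the cutoff $\1\{x\in B\}$ equals $1$ everywhere on $N(A)$. This observation lets the ambient compactness requirement drop out cleanly and is really the only nontrivial choice in the whole argument.

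For finiteness, I would fix $t>0$ and take $r:=t$ in Theorem~\ref{thm:generalSteiner}. On the set $\{(x,u)\in N(A):t<\dl(A,x,u)\}$ one has $\min\{t,\dl(A,x,u)\}^{d-i}=t^{d-i}$, so that
\[
t^{d-i}\,\fbv_i(t)
\leq \int_{N(A)}\1\{x\in A\}\min\{t,\dl(A,x,u)\}^{d-i}\,|\mu_i|(A;\d(x,u))<\infty.
\]
Dividing by $t^{d-i}>0$ gives $\fbv_i(t)<\infty$. Since $|\fb_i(t)|\leq\fbv_i(t)$ by the triangle inequality for signed integrals, the finiteness of $\fb_i(t)$ follows immediately.

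Nonnegativity of $\fbv_i$ is clear because $|\mu_i|(A;\cdot)$ is a (nonnegative) measure and the integrand is nonnegative. Monotonicity follows from the inclusion
\[
\{(x,u)\in N(A):\dl(A,x,u)>t\}\;\subseteq\;\{(x,u)\in N(A):\dl(A,x,u)>s\},\qquad 0<s\leq t,
\]
which after integration against $|\mu_i|(A;\cdot)$ yields $\fbv_i(t)\leq\fbv_i(s)$.

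Right continuity at any $t_0>0$ will follow from monotone convergence applied along any sequence $t_n\downarrow t_0$: the nonnegative measurable functions $\1\{t_n<\dl(A,\cdot)\}$ are then pointwise nondecreasing with limit $\1\{t_0<\dl(A,\cdot)\}$ (since at points with $\dl(A,\cdot)>t_0$ the indicator is eventually $1$, and at points with $\dl(A,\cdot)\leq t_0$ it is identically $0$ for every $t_n>t_0$). Passing the limit inside the integral yields $\fbv_i(t_n)\to\fbv_i(t_0)$, which together with the monotonicity established above gives genuine right continuity. I do not anticipate a substantial obstacle: once Theorem~\ref{thm:generalSteiner} is in hand, each of the four claims reduces to a one-line integrability, positivity, or convergence argument.
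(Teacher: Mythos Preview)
Your proposal is correct and follows essentially the same approach as the paper's own proof: both use the integrability condition from Theorem~\ref{thm:generalSteiner} with $B=A$ and $r=t$ to extract the factor $t^{d-i}$ and deduce finiteness, and both obtain right continuity by passing to the limit along a decreasing sequence $t_n\downarrow t_0$ (the paper phrases it as continuity from below of the measure $|\mu_i|(A;\cdot)$, which is the same as your monotone convergence argument). Your explicit observation that $N(A)\subset A\times\eS^{d-1}$ forces $\1\{x\in A\}\equiv 1$ on $N(A)$ is a nice clarification that the paper leaves implicit.
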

 \begin{proof}
   Let $i\in I_d$. The nonnegativity and monotonicity of $\fbv_i$ are immediate from its definition.
	The finiteness of $\fbv_i(t)$ for $t>0$ can be deduced from  \cite[Thm.\ 2.1, eq.\ (2.2)]{HugLasWeil}: Setting $r=t$ and $B=A$ in that theorem (which is possible since $A$ is assumed to be compact), one gets 	
    \begin{equation}
	    \label{eq:integrability}
		\int_{N(A)}\min\{t,\dl(A,x,u)\}^{d-i}|\mu_i|(A;\d (x,u))<\iy.
	\end{equation}
	This integral remains finite if the integrand is multiplied by $\1\{t<\dl(A,x,u)\}$.
 Observe that $\1{\{t<\dl(A,x,u)\}}\neq0$ implies $\min\{t,\dl(A,x,u)\}=t$ so that we obtain
	\begin{equation}
		t^{d-i}\int_{N(A)}\1{\{t<\dl(A,x,u)\}}|\mu_i|(A;\d (x,u))<\iy,
	\end{equation}
	from which the finiteness of $\fbv_i(t)$, and hence of $\fb_i(t)$ follows.

	To show continuity from the right we fix $t_0>0$ and choose any decreasing sequence $(t_n)_{n\geq 1}$ in $(t_0,+\infty)$ such that $t_n\to t_0^+$. For any $n\in\N_0$ let $B_n:=\{(x,u)\in N(A):\dl(A,x,u)>t_n\}$.
	Then $B_n\stq B_{n+1}$ and $B_0=\bigcup_{n\geq 1}B_n$. By the monotonicity of the positive measure $|\mu_i|(A;\cdot)$, it follows that $\fbv_i(t_n)=|\mu_i|(A;B_n)\to|\mu_i|(A;B_0)=\fbv_i(t_0)$ as $n\to\infty$.
\end{proof}
\begin{remark}\label{rem:pm-basic}
    Instead of $\fbv_i$ one can also consider the corresponding positive and negative variation analogs of $\fb_i$ and denote them by $\fb_i^{\pm}$, respectively.
    We will refer to them later when needed. Note that all of the definitions and most of the results involving $\fbv_i$ can be appropriately reformulated in terms of $\fb_i^{\pm}$. In particular, the properties of $\fbv_i$ in Proposition~\ref{def:basic} hold equally for $\fb_i^\pm$.
\end{remark}

\begin{remark}
	In general, the function $\fbv_i$ fails to be continuous from the left, since the set $B_t:=\{(x,u)\in N(A):\dl(A,x,u)=t\}$ might be of positive $|\mu_i|$-measure for some $t>0$.
	As an example, let $A\subset\R^2$ be the union of two parallel segments of length $\ell$ at distance $2$ from each other. Then $B_1$ consists of all pairs $(x,u)$ such that $x\in A$ and $u$ is the unit normal of $A$ at $x$ pointing inwards, i.e., towards the segment not containing the point $x$. It is easy to see that $\fbv_1(1)=\fb_1(1)=2\ell$, while $\fbv_1(t)=\fb_1(t)=4\ell$ for any $0<t<1$. Hence the left limit $\fbv_1(1^-)$ exists and equals $4\ell$.
\end{remark}

\begin{remark}
	By \cite[Prop.\ 4.1]{HugLasWeil}, the support measure $\mu_{d-1}$ is a non-negative $\sigma$-finite measure on $\R^d\times\eS^{d-1}$. This implies that the corresponding basic  function $\fb_{d-1}$ is always nonnegative and $\fbv_{d-1}=\fb_{d-1}$.
\end{remark}

\begin{remark}
    The integrals that we call here the basic functions of a compact set already appear implicitly in \cite[\S 4.3]{HugLasWeil}, where also their continuity from the right is shown. Moreover, it is shown there that  their continuity at a point $t_0$ implies the existence of the derivative of the function $t\mapsto V(A_t)$ at $t_0$; see \cite[Cor.\ 4.5]{HugLasWeil}.
    For random compact sets $A$, also the corresponding expectations of such functions appear in \cite{HugLasWeil} and in \cite{Last06}, where they are called \emph{mean curvature functions}.
\end{remark}

In this paper, we study, in particular, the limiting behaviour of the basic functions of a given set $A$ as $t\to 0^+$ in order to extract refined information on the fractal properties of $A$; similarly as is done in the classical definition of the Minkowski dimension via the parallel volume.
In general, one cannot expect the limit of $\fb_i(t)$ to exist when $t\to 0^+$. Therefore, one needs to introduce proper rescaling, and also possibly look at upper and lower limits.
Recall that, for $q\geq 0$, the \emph{$q$-dimensional upper outer Minkowski content} of a compact set $A\subset\R^d$ is defined by
\begin{equation}
        \label{eq:outer-Mink}
		\uMinkout{q}(A):=\limsup_{\eps\to 0^+}\eps^{q-d} V(A_\eps\setminus A),
	\end{equation}
and the \emph{lower outer Minkowski content} $\lMinkout{q}(A)$ similarly, by replacing $\limsup$ by $\liminf$. Moreover, the \emph{upper outer Minkowski dimension} is given by
\begin{equation} \label{eq:dimout-M-def}
    \udimout_M(A):=\inf\{q\geq 0: \uMinkout{q}(A)=0\}
\end{equation}
and a lower counterpart $\ldimout_M(A)$ similarly using $\lMinkout{q}(A)$.
Note that for compact sets $A\subset \R^d$ with $V(A)=0$, the outer Minkowski content and dimension coincide with the ordinary ones. However, for the questions studied here, the outer versions are just the right notion in general. The outer Minkowski content of dimension $d-1$ plays an important role in geometric measure theory as a notion of surface area, see \cite{AmCoVi08,Vi09}, and has recently been generalized to non-Euclidean settings \cite{LuVi16}.

The next definition mimics the one of the outer Minkowski content and dimension replacing the parallel volume by the just introduced basic functions.
\begin{definition}[Basic contents and exponents]
	\label{def:reach-measure}
	Let $i\in I_d$ and $q\in\R$.
	We define the {\em ($q$-dimensional) upper $i$-th basic content} of a compact set $A\subseteq\R^d$ by
	\begin{equation}
        \label{eq:upper-basic}
		\uBC_i^q(A):=\limsup_{t\to 0^+}t^{q-i}\fb_i(t),
	\end{equation}
	along with its total variation analog
	\begin{equation}
        \label{eq:upper-basic-var}
		\uBC_i^{\var,q}(A):=\limsup_{t\to 0^+}t^{q-i}\fbv_i(t),
	\end{equation}
	and also their {\em lower} counterparts, $\lBC_i^q(A)$ and $\lBC_i^{\var,q}(A)$, by replacing the upper limits in \eqref{eq:upper-basic} and \eqref{eq:upper-basic-var}, respectively, by lower limits.

	We also introduce the {\em upper $i$-th basic scaling exponent} $\ubex_{i}(A)$ of $A$ as
	\begin{equation}
		\label{eq:reach-exp}
		\ubex_i:=\ubex_{i}(A):=\inf\{q\in\R:\uBC_i^{\var,q}(A)=0\}=\sup\{q\in\R:\uBC_i^{\var,q}(A)=+\iy\},
	\end{equation}
	as well as its {\em lower} counterpart $\lbex_{i}=\lbex_i(A)$, using $\lBC_i^{\var,q}(A)$ in \eqref{eq:reach-exp} instead of $\uBC_i^{\var,q}(A)$.
	Furthermore, if $\ubex_i=\lbex_i$, then we denote the common value by $\bex_i$ and call it the {\em $i$-th basic scaling exponent} of $A$ and if, in addition, $\uBC_i^{\bex_i}(A)=\lBC_i^{\bex_i}(A)$, we denote the common value by $\BC_i^{\bex_i}(A)$ and call it the {\em ($\bex_i$-dimensional) $i$-th basic content of $A$}. 
\end{definition}

If a set $A\subset\R^d$ is sufficiently regular and non-fractal, then one can expect that, for all $i\in I_d$, the $i$-th basic scaling exponent $\bex_i(A)$ exists and equals $i$, except when the support measure $\mu_i(A;\cdot)$ vanishes. In the latter situation, one has $\fb_i=\fb_i^{\var}\equiv 0$ and so it is consistent to define the basic exponent $\bex_i(A)$ to be $-\infty$. Furthermore, when $\bex_i(A)=i$, it is reasonable to expect that the corresponding $i$-th basic content $\BC^{i}_i(A)$ exists and is equal to the total mass of the measure $\mu_i(A;\cdot)$. In contrast, for fractals we expect that at least one of the basic exponents will exhibit a more intricate behavior. Before addressing such cases, we begin with a more precise statement concerning a class of well-behaved non-fractal sets, namely convex sets.

\begin{proposition}[Basic exponents of convex sets] \label{prop:convex}
    Let $K$ be a nonempty compact convex set in $\R^d$ and let $k\in\N$ be the affine dimension of $K$.
    Then, for each $i\in\{0,\ldots,k\}$, $\bex_i(K)=i$. Moreover, $\BC^i_i(K)$ exists and equals the intrinsic volume $C_i(K)$, cf.~\eqref{eq:total-curv}. Furthermore, for each $i\in\{k+1,\ldots,d-1\}$, $\mu_i(K,\cdot)\equiv 0$ and so $\bex_i(K)=-\infty$. Hence $\BC^q_i(K)=0$ for any $q\in\R$. (In particular, $\BC^i_i(K)=0$, in correspondence with $C_i(K)=0$.)
\end{proposition}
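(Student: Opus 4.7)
The crucial starting point is that any nonempty compact convex $K$ has infinite reach, i.e.\ $\dl(K, x, u) = +\infty$ for every $(x, u) \in N(K)$. Plugging this into the definition \eqref{eq:m_i} of $\fb_i$, the indicator $\1\{t < \dl(K, x, u)\}$ equals $1$ identically on $N(K)$ for every $t > 0$, so $\fb_i(K; t) = \mu_i(K; N(K))$ is \emph{independent of $t$}. Since $K$ has positive reach, its support measures coincide with Federer's curvature measures (cf.\ p.~\pageref{page:curv}), so $\fb_i(K; t) = C_i(K, N(K))$. For convex $K$ these curvature measures are nonnegative (a standard convex-geometry fact), hence $\fbv_i = \fb_i$, and the common total mass equals the intrinsic volume $C_i(K)$.

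For $i \in \{0, \ldots, k\}$ I will invoke the classical fact that a compact convex set of affine dimension $k$ has $C_i(K) > 0$. Since $\fb_i(K; t) \equiv C_i(K) > 0$, the quantity $t^{q-i}\fb_i(t)$ converges to $0$, $C_i(K)$, or $+\infty$ according to whether $q > i$, $q = i$, or $q < i$, which immediately yields $\ubex_i(K) = \lbex_i(K) = i$ and $\BC_i^i(K) = C_i(K)$. For $i \in \{k+1, \ldots, d-1\}$ I will again use that $C_i(K) = 0$ once $i$ exceeds the affine dimension of $K$. As $\mu_i(K; \cdot)$ is a nonnegative measure with total mass $\mu_i(K; \R^d \times \eS^{d-1}) = C_i(K) = 0$, it must vanish identically; hence $\fb_i \equiv \fbv_i \equiv 0$, forcing $\uBC_i^{\var, q}(K) = 0$ for every $q \in \R$, so $\ubex_i(K) = \lbex_i(K) = -\infty$ and $\BC_i^q(K) = 0$ for every $q$.

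There is no serious obstacle: the argument hinges on the infinite reach of $K$, which collapses the $t$-dependence of the basic functions; after that, everything reduces to well-known statements about intrinsic volumes. The one point deserving some care is the vanishing of $\mu_i(K; \cdot)$ for $i > k$, which the nonnegativity-plus-total-mass-zero argument handles cleanly (alternatively, it could be read off directly from the generalized principal curvatures via \eqref{eq:Hj}, noting that a $k$-dimensional convex set contributes $d-1-k$ ``infinite'' principal curvatures at each boundary point and hence $H_j \equiv 0$ for $j < d-1-k$, but that route is more cumbersome).
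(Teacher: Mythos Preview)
Your proposal is correct and follows essentially the same approach as the paper's proof: both exploit that infinite reach collapses the $t$-dependence of $\fb_i$ to the constant $C_i(K)$, then use nonnegativity of the support measures together with the classical facts $C_i(K)>0$ for $i\le k$ and $C_i(K)=0$ for $i>k$ to read off the exponents and contents.
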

 \begin{proof}
 Recall that for the convex compact set $K\subset\R^d$, the reach is infinite, i.e., $\dl(K,\cdot)\equiv\infty$. Moreover, for any $i\in I_d$, $\mu_i(K;\cdot)$ is a nonnegative measure with total mass equal to $C_i(K)$, i.e., $\mu_i(K;N(K))=C_i(K)$. It is also well known that $C_i(K)=0$ for $i>k$. For the $i$-th basic function $\fb_i(K;\cdot)$ this implies
\begin{align*}
    \fb_i(K;t)=\int_{N(K)}\1{\{t<\dl(K,x,u)\}}\mu_i(K;\d(x,u))=\mu_i(K;N(K))=C_i(K).
\end{align*}
 Hence $\fb_i^{\var}(K;\cdot)=\fb_i(K;\cdot)\equiv C_i(K)>0$ for $i\leq k$ and $\fb_i(K;\cdot)\equiv 0$ for $i>k$, from which it is easy to deduce $\bex_i(K)=i$ and $\BC^i_i(K)=C_i(K)$ for $i\leq k$, and $\bex_i(K)=-\infty$ for $i>k$ as claimed.
\end{proof}

To help the reader to get familiar with these notions, we compute basic exponents and basic contents for a number of simple examples in Section~\ref{sec:ex} below.
Our main results about the basic scaling exponents are summarized in the following statement, which relates them in particular to the outer Minkowski dimension.
\begin{theorem}[Properties of Basic Exponents] \label{thm:basic-exp}
    Let $A\subset\R^d$ be a nonempty compact set.
 For each $i\in I_d$ one of the following is true:
    \begin{enumerate}
        \item[(a)]  $\mu_i(A;\cdot)\equiv 0$ and so by convention we let $\un{\frk{m}}_i(A)=\ov{\frk{m}}_i(A)=-\infty$;
        \item[(b)] $i\leq \lbex_i(A)\leq \ubex_i(A)$.
    \end{enumerate}
       It is always true that  $\mu_0(A;\cdot)\not\equiv 0$. Furthermore,
     \begin{equation}\label{eq:dim_max_m}
     \max_{i\in I_d} \ubex_i(A) =\udimout_M A \quad \text{ and } \quad  \max_{i\in I_d}\lbex_i(A)\leq \ldimout_M(A).
 \end{equation}
\end{theorem}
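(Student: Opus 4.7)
My plan is to proceed in four stages, first handling the per-index claims and then the key relation with the outer Minkowski dimension.

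\textbf{Stage 1 (dichotomy, $i\leq\lbex_i$, and $\mu_0\not\equiv 0$).} If $\mu_i(A;\cdot)\equiv 0$, the convention gives $\bex_i(A)=-\infty$. Otherwise, pick $t_0>0$ with $\fbv_i(A;t_0)>0$; by the monotonicity in Proposition~\ref{def:basic} one has $\fbv_i(A;t)\geq\fbv_i(A;t_0)>0$ on $(0,t_0]$, so $t^{q-i}\fbv_i(A;t)\to+\infty$ as $t\to 0^+$ for every $q<i$, yielding $\lbex_i(A)\geq i$. The bound $\lbex_i\leq\ubex_i$ is immediate from $\liminf\leq\limsup$. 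To show $\mu_0(A;\cdot)\not\equiv 0$, I would argue by contradiction. Combining the trivial sandwich $B(x_0,\e)\stq A_\e\stq B(x_0,\e+\diam A)$ for any $x_0\in A$ gives $V(A_\e\setminus A)=\kappa_d\e^d+O(\e^{d-1})$ as $\e\to\infty$. On the other hand, Theorem~\ref{thm:generalSteiner} (applied with $r=1$ and $B=A$) yields $\fbv_j(A;1)<\infty$ for each $j$, whence by monotonicity $\fbv_j(A;\cdot)\leq\fbv_j(A;1)$ on $[1,\infty)$. Consequently all terms with $j\geq 1$ in \eqref{eq:v-par} are $O(\e^{d-1})$ as $\e\to\infty$, so $\mu_0\equiv 0$ would force $V(A_\e\setminus A)=O(\e^{d-1})$, contradicting the $\kappa_d\e^d$ leading term.

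\textbf{Stage 2 (the inequality $\udimout_M(A)\leq\max_i\ubex_i(A)$).} Set $D:=\max_i\ubex_i(A)$. If $D\geq d$ the inequality is trivial since $\udimout_M(A)\leq d$. Otherwise fix $q\in(D,d)$. For each $i\in I_d$, $q>\ubex_i$ gives $\uBC_i^{\var,q}(A)=0$, so for every $\delta>0$ there is $\e_0>0$ with $\fbv_i(A;t)\leq\delta\,t^{i-q}$ on $(0,\e_0)$. Inserting $|\fb_i|\leq\fbv_i$ into \eqref{eq:v-par} yields
\begin{equation*}
V(A_\e\setminus A)\leq\sum_{i=0}^{d-1}\o_{d-i}\int_0^\e t^{d-i-1}\fbv_i(A;t)\,\d t\leq C\,\delta\,\e^{d-q}\qquad\text{for }\e\in(0,\e_0).
\end{equation*}
Hence $\uMinkout{q}(A)\leq C\delta$ for every $\delta>0$, so $\uMinkout{q}(A)=0$ and $\udimout_M(A)\leq q$; letting $q\downarrow D$ closes this half.

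\textbf{Stage 3 (the reverse inequality $\max_i\ubex_i(A)\leq\udimout_M(A)$).} This is the hard direction, where Proposition~\ref{prop:basic-est} and the outer box dimension enter. The heuristic comes from \eqref{eq:support-formula}: at $(x,u)\in N(A)$ with $\dl(A,x,u)>t$ the generalized principal curvatures $k_l(A,x,u)$ are bounded in absolute value by $t^{-1}$ (otherwise the focal point would lie at distance less than $t$ from $x$), so $|H_{d-1-i}(A,x,u)|\leq C\,t^{-(d-1-i)}$. Together with the integral representation of $|\mu_i|$, this suggests that $\fbv_i(A;t)$ should be controlled by $t^{-(d-1-i)}$ times the $\sH^{d-1}$-mass of $\{(x,u)\in N(A):\dl(A,x,u)>t\}$, which in turn is controlled by the number of balls of radius $t$ needed to cover the outer $t$-neighborhood of $A$. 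Proposition~\ref{prop:basic-est} is precisely the rigorous form of this packing bound; it yields $\ubex_i(A)\leq\udimout_B(A)$ for every $i$, where $\udimout_B(A)$ denotes the upper outer box dimension of \eqref{eq:dimout-box-def}. Lemma~\ref{prop:outer-box-vs-Mink} then identifies $\udimout_B(A)=\udimout_M(A)$, and taking the maximum over $i\in I_d$ concludes the equality in \eqref{eq:dim_max_m}.

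\textbf{Stage 4 (lower version and main obstacle).} The inequality $\max_i\lbex_i(A)\leq\ldimout_M(A)$ follows from the same packing estimate of Proposition~\ref{prop:basic-est} applied along a sequence $\e_n\to 0^+$ realizing the $\liminf$ defining $\ldimout_M(A)$. Only $\leq$ (and not equality) can be expected, for two reasons: the Steiner formula \eqref{eq:v-par} integrates the $\fb_i$, so pointwise lower bounds on $\fb_i$ do not transfer to lower bounds on $V(A_\e\setminus A)$; and the $\liminf$ in the definition of each $\lbex_i$ may be realized on different subsequences, incompatible with the one realizing $\ldimout_M$. The main obstacle throughout is clearly Proposition~\ref{prop:basic-est} itself: the curvature/packing heuristic above has to be made rigorous on the normal bundle, and this is precisely the motivation for introducing the outer box dimension and establishing both its packing characterization and its coincidence with the upper outer Minkowski dimension in Sections~\ref{sec:4}--\ref{sec:steiner}.
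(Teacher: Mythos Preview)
Your four-stage outline matches the paper's structure (Lemma~\ref{lem:lower-bound}, Proposition~\ref{prop:pos-curv}, Proposition~\ref{prop:mink-basic-ineq}, Proposition~\ref{prop:basic-exp-upper-bd}), and Stages~1, 2, and~4 are correct.

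Your argument in Stage~1 for $\mu_0(A;\cdot)\not\equiv 0$ via the large-$\e$ asymptotics of $V(A_\e\setminus A)$ is genuinely different from the paper's and more elementary. The paper instead constructs an explicit set $B\subset N(A)$ of positive $\sH^{d-1}$-measure (via the convex hull of $A$ and a Gauss-map comparison) on which $\dl\equiv\infty$ and all generalized principal curvatures are strictly positive, and then reads off $\mu_0(A;B)>0$ from the integral representation~\eqref{eq:support-formula}. Your route gives the non-vanishing more cheaply, but the paper's approach yields extra structure (namely $\mu_j(A;B)\geq 0$ for all $j$ on a set of infinite local reach) that is genuinely needed later in the proof of Theorem~\ref{thm:support-exp} to obtain $\lsex_i(A)\geq 0$.

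The heuristic you offer in Stage~3, however, is incorrect, even though you rightly flag Proposition~\ref{prop:basic-est} as the crux. The claim that $\dl(A,x,u)>t$ forces $|k_l(A,x,u)|\leq t^{-1}$ is false: the focal-point argument gives only the one-sided bound $k_l(A,x,u)>-t^{-1}$, while positive curvatures can be $+\infty$ (e.g.\ at vertices). In fact $|H_j|\leq\binom{d-1}{j}$ unconditionally, so the curvature density is not the obstacle; the real difficulty is controlling the $\sH^{d-1}$-mass of $\{(x,u)\in N(A):\dl(A,x,u)>t\}$, which can be infinite for fractal boundaries. The paper's route to Proposition~\ref{prop:basic-est} is entirely different from your heuristic: it passes to the parallel set $A_\e$ (whose support measures are finite signed measures), inverts the Steiner-type relation~\eqref{eq:mu-parallel} to express $\fbv_i$ in terms of $|\mu_j|(A_\e)$ (Proposition~\ref{prop:invert-rela}), and then bounds $|\mu_j|(A_\e)$ by reflecting to the closed complement $\widetilde{A_\e}$, which has positive reach once $\e>\sqrt{2}\,\diam A$ (Lemma~\ref{lem:normal-reflection} plus a known curvature bound for sets of positive reach). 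The covering decomposition (Lemma~\ref{lem:basic-est2-new}) together with this diameter-scale bound then yields Proposition~\ref{prop:basic-exp-upper-bd}, as you correctly anticipate.
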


\begin{remark}
An immediate consequence of Theorem~\ref{thm:basic-exp} is that, for all indices $i$ such that $i>\ldimout_M A$, assertion (a) holds (meaning that $\mu_i(A;\cdot)\equiv 0$ and so $\lbex_i(A)=\ubex_i(A)=-\infty$).
  In \cite[Proposition 2.4]{Last06}, it is shown that if $\sH^{k}(\partial A)=0$ for some nonempty closed set $A$, then $\mu_k(A;\cdot)\equiv 0$, which is slightly stronger. Indeed, the Hausdorff dimension of $\partial A$ may be strictly smaller than the (lower) outer Minkowski dimension of $A$ and so $\mu_i(A;\cdot)$ may vanish for further indices $i$ below $\ldimout_M A$, namely for all $i>\lfloor{\dim}_H A\rfloor$. 
   Note, however, that a general upper bound for the basic scaling exponents is only provided by the (upper/lower) outer Minkowski dimension and not by the Hausdorff dimension, as equation \eqref{eq:dim_max_m} and Remark~\ref{rem:bex-sharpness} clarify.
\end{remark}

\begin{remark} \label{rem:bex-sharpness}
  For all indices $i\in I_d$ except $i=0$, the measures $\mu_i(A;\cdot)$ do indeed vanish for some sets $A$, that is, assertion (a) is possible for each $i\neq 0$. Moreover, the first inequality in (b) is optimal for all indices $i\in I_d$ in the sense that there exist sets $A$ for which equality holds. Examples for both phenomena are provided by compact convex sets $K\subset\R^d$ as discussed in Proposition~\ref{prop:convex}. Indeed, if the affine dimension of $K$ is at least $i$, then $\bex_i(K)=i$, and if it is below $i$, then $\bex_i(K)=-\infty$.

  Furthermore, if $K$ has affine dimension $k\leq d-1$,
  we have $\dimout_M K=\dim_M K=k=\max\{\bex_i(K):i\in I_d\}$. Hence such sets $K$ are trivial examples for the fact that $\ldimout_M K$ is a \emph{sharp} upper bound for all the lower basic scaling exponents $\lbex_i$ as stated in \eqref{eq:dim_max_m} (and, similarly, that $\udimout_M K$ is a sharp upper bound for all $\ubex_i$). In view of the first equality in \eqref{eq:dim_max_m}, one may ask, whether a corresponding equality holds for the lower basic scaling exponents. We believe that this is not the case. In general, $\ldimout_M A$ is only an upper bound for the the maximum of the $\lbex_i(A)$, but there is no equality in general. Below we obtain as a corollary to another result that the lower S-dimension of $A$ is always a lower bound for $\max\{\lbex_i(A):i\in I_d\}$, see Corollary~\ref{cor:lower-bd-max-m}.
\end{remark}

The introduction of the basic scaling exponents also enables us to improve the integrability result of \cite[eq.\ (2.2)]{HugLasWeil} which will be crucial in our follow-up paper \cite{RaWi2}. Namely, in Lemma \ref{lem:integrability-better} below we will show that the exponent $d$ in equation \eqref{eq:integrability} (and similarly in \cite[eq.~(2.2)]{HugLasWeil}) can be replaced by any real number strictly larger than $\ubex_i(A)$.

Before turning to the proof of Theorem \ref{thm:basic-exp} let us comment on the possible behavior of the basic exponents.
\begin{remark} \label{rem:frac-ex}
  Theorem \ref{thm:basic-exp} leaves the possibility that either of the basic exponents could be the largest and could thus determine the outer Minkowski dimension of the set. In $\R^2$, one has two exponents, $\bex_0$ and $\bex_1$. In Section~\ref{sec:ex} below we discuss in detail three (classes of) examples which illustrate that either of the three cases $\bex_0<\bex_1$, $\bex_0=\bex_1$ and $\bex_0>\bex_1$ is possible, see also Figure~\ref{fig:ex-joint}.
More precisely, for the Sierpi\'nski gasket we observe $\bex_0=0$ and $\bex_1=\dimout_M>1$, see Example~\ref{ex:SG}. It demonstrates that the measure $\mu_1$ can be responsible for the `fractality'.
But one can also have $\bex_0=\bex_1(=\dimout_M>1)$  (see the \emph{fractal windows} in Example~\ref{ex:FW}) and $1<\bex_1<\bex_0(=\dimout_M)$ (see the \emph{enclosed fractal dust} in Example \ref{ex:FD}). Hence the outer Minkowski dimension may also be determined by the support measure $\mu_0$ or by both measures $\mu_0$ and $\mu_1$.

In $\R^d$, $d>2$, we expect the same variability. Any of the basic exponents (or any subset of them) can be the largest and will thus determine $\udimout_M$.
\end{remark}

\begin{figure}[ht]
    \includegraphics[width=0.3\textwidth]{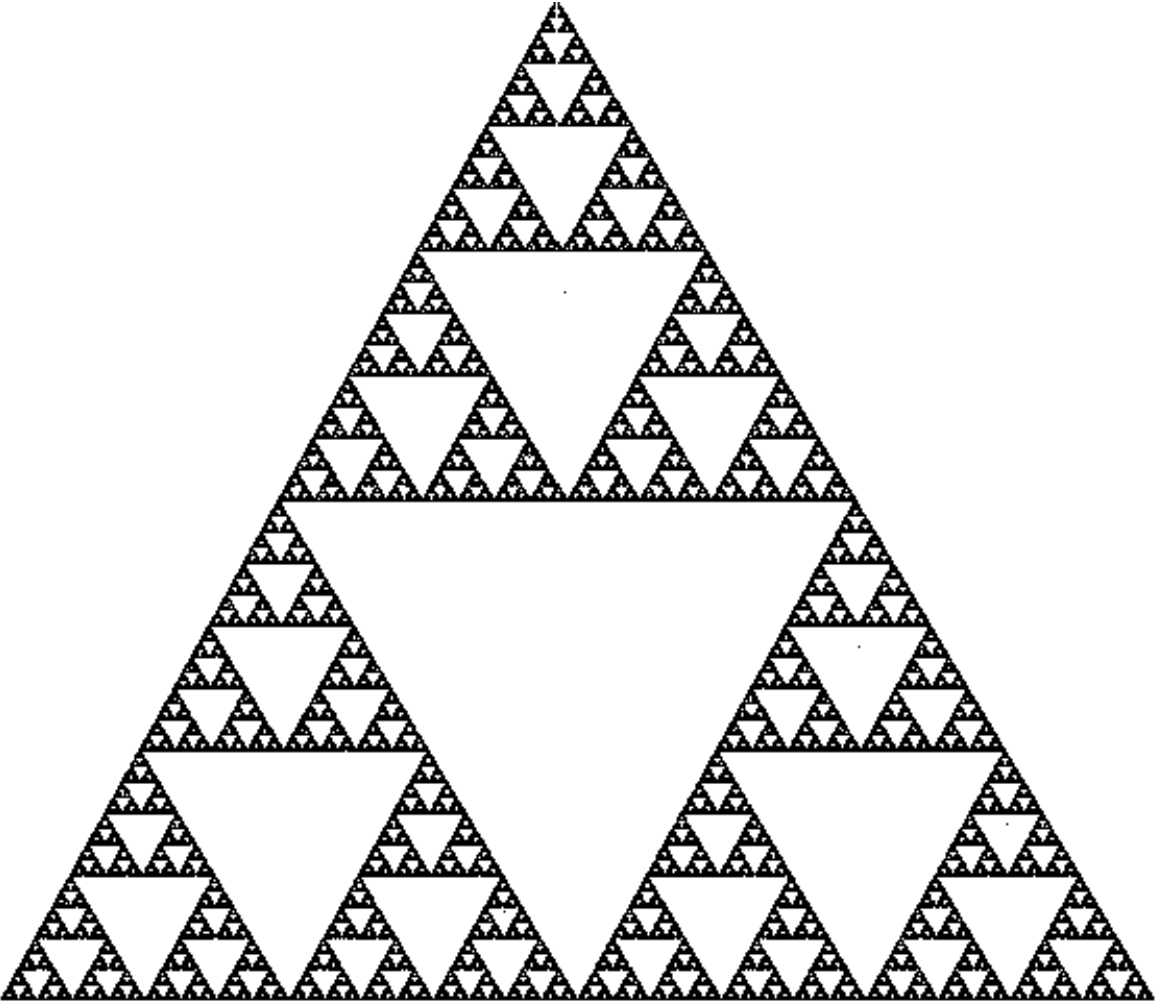}
    \includegraphics[width=0.3\textwidth]{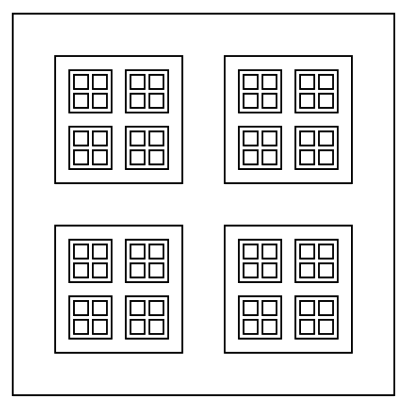}
    \begin{tikzpicture}[scale=1.8]
	\draw (0, 0) rectangle (1,1);
		\equidistantPoints{xshift=1cm}{0.69}{1}{3};
		\equidistantPoints{xshift=1cm+0.69cm}{0.56}{2}{3}
		\equidistantPoints{xshift=1cm+0.69cm+0.56cm}{0.48}{3}{3}
		\equidistantPoints{yshift=1cm}{0.43}{4}{3}
		\equidistantPoints{yshift=1cm,xshift=0.43cm}{0.39}{5}{3}
		\equidistantPoints{yshift=0.69cm,xshift=1cm}{0.37}{6}{3}
		\equidistantPoints{yshift=0.56cm,xshift=1cm+0.69cm}{0.33}{7}{3}
		\equidistantPoints{yshift=0.48cm,xshift=1cm+0.69cm+0.56cm}{0.31}{8}{3}
		\equidistantPoints{yshift=0.69cm,xshift=1cm+0.37cm}{0.29}{9}{3}
		\equidistantPoints{yshift=0.69cm+0.37cm,xshift=1cm}{0.28}{10}{3}
		\equidistantPoints{yshift=1cm+0.43cm}{0.26}{11}{3}
	\end{tikzpicture}
   \caption{\label{fig:ex-joint} Examples of sets illustrating the possible relations between the basic exponents: $\bex_0<\bex_1$ (left), $\bex_0=\bex_1$ (middle), $\bex_0>\bex_1$ (right), meaning that the support measure $\mu_1$ (left), $\mu_0$ (right) or both together (middle) determine the outer Minkowski dimension, cf.\ Remark~\ref{rem:frac-ex}. }
\end{figure}

\section{Proof of Theorem~\ref{thm:basic-exp} and the outer box dimension} \label{sec:4}

In this section we provide a proof of Theorem~\ref{thm:basic-exp} up to one estimate, stated in Proposition~\ref{prop:basic-est}, the proof of which will only be discussed later in Section~\ref{sec:steiner}.
We will proceed as follows. First we will provide a proof of the dichotomy of (a) vs.\ (b) in Theorem~\ref{thm:basic-exp}, see Lemma~\ref{lem:lower-bound} just below.
Then we show that $|\mu_0|(A;\cdot)\not\equiv 0$, see
Proposition~\ref{prop:pos-curv}.
The next step is to establish the `$\geq$'-relation in \eqref{eq:dim_max_m},
i.e., that $\udimout_M$ is bounded from above by the maximum of the upper basic scaling exponents, see Proposition~\ref{prop:mink-basic-ineq}.
For the proof of the remaining inequalities in \eqref{eq:dim_max_m} some preparations are necessary. It turns out to be helpful to study the outer Minkowski dimension a bit more closely. In particular, we introduce here a close relative, the \emph{outer box dimension} $\dimout_B$, which seems a new notion and might be of independent interest. We will show that the relation between outer box and outer Minkowski dimension $\dimout_M$ is almost the same as the relation between `ordinary' box and Minkowski dimension, see Proposition~\ref{prop:outer-box-vs-Mink}. The outer box dimension will be crucial in the proof of the `$\leq$'-relations in \eqref{eq:dim_max_m}, see Proposition~\ref{prop:basic-exp-upper-bd}.

As a first step towards the proof of Theorem~\ref{thm:basic-exp}, we verify as announced the dichotomy between cases (a) and (b).

\begin{lemma}
\label{lem:lower-bound}
	Let $A\subset\R^d$ be a nonempty compact set and $i\in I_d$. Then either $\mu_i(A;\cdot)\equiv 0$, or 
    the following inequalities hold:
	$i\leq \un{\frk{m}}_i\leq \ov{\frk{m}}_i$.
\end{lemma}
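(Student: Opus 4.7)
The plan is to establish the dichotomy by showing that, once $\mu_i(A;\cdot)$ is not identically zero, the total-variation basic function $\fbv_i$ remains bounded below by a positive constant for all small $t$; the inequality $\lbex_i \geq i$ then follows immediately from the definition, and $\lbex_i \leq \ubex_i$ is trivial.

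First, I would unwind the definitions. Since $\fbv_i$ is nonnegative, we have $\liminf \leq \limsup$, so $\uBC_i^{\var,q}(A)=0$ forces $\lBC_i^{\var,q}(A)=0$, which gives $\lbex_i(A) \leq \ubex_i(A)$ for free. The real content is the bound $\lbex_i(A) \geq i$ under the assumption that $\mu_i(A;\cdot) \not\equiv 0$, equivalently $|\mu_i|(A;N(A)) > 0$.

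Next, I would exploit the structure of the reach function. Recall from Section~\ref{sec:pre} that $\dl(A,\cdot) > 0$ on $N(A)$, so setting $B_t := \{(x,u) \in N(A) : \dl(A,x,u) > t\}$, the family $(B_t)_{t>0}$ is increasing as $t \to 0^+$ with $\bigcup_{t>0} B_t = N(A)$. Since $|\mu_i|(A;\cdot)$ is a (positive) measure and $\fbv_i(t) = |\mu_i|(A;B_t)$ is finite for every $t > 0$ by Proposition~\ref{def:basic}, continuity of measure from below gives
\begin{equation*}
\lim_{t \to 0^+} \fbv_i(t) \;=\; |\mu_i|(A; N(A)) \;>\; 0.
\end{equation*}
(If the total mass is infinite, the limit is $+\infty$; either way it is strictly positive.) By monotonicity of $\fbv_i$, we can therefore pick $t_0 > 0$ and $c > 0$ with $\fbv_i(t) \geq c$ for all $t \in (0, t_0]$.

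Finally, for any real $q < i$ one has $t^{q-i} \to +\infty$ as $t \to 0^+$, hence $t^{q-i} \fbv_i(t) \geq c\, t^{q-i} \to +\infty$, which gives $\lBC_i^{\var,q}(A) = +\infty$. By the definition of $\lbex_i$ (the infimum of $q$ for which $\lBC_i^{\var,q}(A) = 0$, equivalently the supremum of $q$ for which it is $+\infty$), this forces $\lbex_i(A) \geq q$ for every $q < i$, hence $\lbex_i(A) \geq i$, completing the chain.

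I do not anticipate a serious obstacle here: the only subtlety is making sure the continuity-of-measure argument applies even in the case $|\mu_i|(A;N(A)) = +\infty$, which is handled because each $B_t$ has finite $|\mu_i|$-mass by the integrability bound \eqref{eq:integrability} and the sets increase to $N(A)$. Everything else is bookkeeping with the definitions.
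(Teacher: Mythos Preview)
Your proof is correct and follows essentially the same approach as the paper's: both use that $\fbv_i(t)$ increases to $|\mu_i|(A;N(A))$ as $t\to 0^+$ (via monotonicity/continuity from below), conclude this limit is positive when $\mu_i(A;\cdot)\not\equiv 0$, and then observe that $t^{q-i}\fbv_i(t)\to+\infty$ for $q<i$. Your version is slightly more explicit about the continuity-from-below step and the possible $+\infty$ case, but the argument is the same.
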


\begin{proof}
	The inequality between the upper and the lower basic scaling exponent is clear from their respective definitions.
	By Proposition~\ref{lem:parallel-supp-finite}, $\fbv_i$ is monotone decreasing. Hence $\fbv_i(t)$ either tends to $+\iy$, or to some positive number as $t\to0^+$, or it equals to $0$ for all $t>0$.
	In the latter case, one obtains $|\mu_i|(A;N(A))=\lim_{t\to0^+}\fbv_i(t)=0$ and hence $\mu_i$ is trivial.
	In the other two cases, clearly, if $q<i$ then $\lBC_i^{\var,q}(A)=+\infty$ (since $t^{q-i}\to +\iy$ as $t\to 0^+$) and hence $\ubex_i\geq\lbex_i\geq i$ as desired.
\end{proof}

The assertion $|\mu_0|(A;\cdot)\not\equiv 0$ is immediate from the following statement. See \eqref{eq:Hj} for the definition the symmetric functions $H_j$ and some references to the definition of generalized principal curvatures. 

\begin{proposition} \label{prop:pos-curv}
   Let $A\subset\R^d$ be nonempty and compact. 
   Then there exists a subset $B\subset N(A)$ with $\sH^{d-1}(B)>0$ such that for each $(x,u)\in B$, $\dl(A,x,u)=\infty$ and the generalized principal curvatures $k_i(A,x,u)$, $i=1,\ldots,d-1$, of $A$ at $(x,u)$ are well-defined and strictly positive.

   As a consequence, $\mu_0(A;B)>0$ and, for any $i\in I_d$,
$\mu_i(A;B)\geq 0$.
\end{proposition}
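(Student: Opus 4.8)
The plan is to work with the convex hull $K:=\operatorname{conv}(A)$ and its support function $h(u):=\sup_{a\in A}\langle a,u\rangle$, and to locate $B$ among the pairs $(x,u)\in N(A)$ with $\dl(A,x,u)=\iy$. The starting point is the elementary equivalence, valid for $x\in A$ and $u\in\eS^{d-1}$: one has $(x,u)\in N(A)$ with $\dl(A,x,u)=\iy$ if and only if $\langle a-x,u\rangle\le 0$ for every $a\in A$, i.e.\ iff $u$ is an outer unit normal of $K$ at $x$. The implication one actually needs is ``$\Leftarrow$'', and it follows from the identity
\[
|x+tu-a|^2=|x-a|^2-2t\langle a-x,u\rangle+t^2\ \ge\ t^2\qquad(t>0,\ a\in A),
\]
with equality only for $a=x$: this forces $x+tu\in\Unp(A)\sts A$, $\pi_A(x+tu)=x$ and $\Pi_A(x+tu)=(x,u)$, hence $(x,u)\in N(A)$ and $\dl(A,x,u)=\iy$.

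Next I would produce a large supply of such pairs together with a curvature lower bound. Being convex and $1$-homogeneous, $h$ is differentiable at $\sH^{d-1}$-a.e.\ $u\in\eS^{d-1}$ and, by Alexandrov's theorem, twice differentiable at $\sH^{d-1}$-a.e.\ $u\in\eS^{d-1}$. For such $u$ set $x_u:=\nabla h(u)$; then $x_u$ is the \emph{unique} maximiser of $\langle\,\cdot\,,u\rangle$ over $K$, hence over $A$, so $x_u\in A$ (in fact $x_u$ is an exposed point of $K$), $\langle a-x_u,u\rangle<0$ for all $a\in A\sts\{x_u\}$, and $(x_u,u)\in N(A)$ with $\dl(A,x_u,u)=\iy$ by the equivalence above. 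Feeding the second-order Taylor expansion of $h$ at $u$ into $\langle a,u+v\rangle\le h(u+v)$, one obtains for $a\in A$ near $x_u$ a bound $-\langle a-x_u,u\rangle\ge c\,|a-x_u-\langle a-x_u,u\rangle u|^2$ with $c=c(u)>0$; combined with compactness of $A$ (for $a$ bounded away from $x_u$ one has $\langle a-x_u,u\rangle$ bounded away from $0$), this yields
\[
\rho_u:=\sup_{a\in A\sts\{x_u\}}\frac{|a-x_u|^2}{-2\langle a-x_u,u\rangle}<\iy ,
\]
which is precisely the statement that $A\subset\ov B(x_u-\rho_u u,\rho_u)$, a closed ball passing through $x_u$ with outer normal $u$ there (if $A=\{x_u\}$ take $\rho_u>0$ arbitrary).

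Let $U\subset\eS^{d-1}$ be the conull set of directions for which all of the above holds and put $\sigma(u):=(x_u,u)$. Since $N(A)\ni(x,u)\mapsto u$ is $1$-Lipschitz and $\sigma\colon U\to N(A)$ is a measurable right inverse of it, $\sH^{d-1}(\sigma(U))\ge\sH^{d-1}(U)=\sH^{d-1}(\eS^{d-1})>0$. Intersecting $\sigma(U)$ with the ($\sH^{d-1}$-conull) subset of $N(A)$ on which all $k_i(A,\cdot)$ are well-defined produces $B\subset N(A)$ with $\sH^{d-1}(B)>0$, on which $\dl(A,\cdot)=\iy$. For $(x,u)\in B$ we have $A\subset\ov B(x-\rho_u u,\rho_u)$ with $x$ on the bounding sphere and $u$ its outer normal, so by monotonicity of the generalised principal curvatures under set inclusion,
\[
k_i(A,x,u)\ \ge\ k_i\bigl(\ov B(x-\rho_u u,\rho_u),x,u\bigr)=\rho_u^{-1}\ >\ 0,\qquad i=1,\dots,d-1 .
\]
(This monotonicity can be seen by comparing $\dist(\cdot,A)^2$ with $\dist(\cdot,\ov B(x-\rho_u u,\rho_u))^2$ near the bounding sphere, where the two functions coincide while $\dist(\cdot,A)$ dominates, using the standard link between the second derivatives of the squared distance and the $k_i(A,\cdot)$.) This gives $B$ all the claimed properties, and the ``consequence'' is then immediate: by \eqref{eq:support-formula}, $\mu_i(A;B)=\omega_{d-i}^{-1}\int_B H_{d-1-i}(A,x,u)\,\sH^{d-1}(\d(x,u))$, and by \eqref{eq:Hj} the integrand is $\ge 0$ on $B$ (a sum of products of positive numbers over a positive denominator), while $H_{d-1}(A,\cdot)=\prod_{l=1}^{d-1}k_l(A,\cdot)\,(1+k_l(A,\cdot)^2)^{-1/2}>0$ there; since $\sH^{d-1}(B)>0$ this forces $\mu_i(A;B)\ge 0$ for all $i\in I_d$ and $\mu_0(A;B)>0$.

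I expect the main obstacle to be exactly the strict positivity of the curvatures on a set of positive $\sH^{d-1}$-measure. Differentiability of $h$ alone (first-order information) gives only the supporting half-space, hence $k_i\ge 0$; one genuinely needs the Alexandrov (second-order) step to obtain the supporting balls, and then the monotonicity principle for generalised principal curvatures of possibly non-positive-reach sets, which---while geometrically obvious---has to be justified with some care, e.g.\ via the distance-function comparison indicated above.
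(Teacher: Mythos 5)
Your proposal follows a genuinely different route from the paper's proof. You also reduce to the convex hull $K=\operatorname{conv}(A)$, but instead of working on $N(K)$ and proving the set $N_+(K)$ has positive $\sH^{d-1}$-measure via the integral representation of $C_0(K)$, you go through the support function $h$: Alexandrov's a.e.\ second-order differentiability gives, for $\sH^{d-1}$-a.e.\ $u\in\eS^{d-1}$, an exposed point $x_u\in A$ and a supporting ball $\ov B(x_u-\rho_u u,\rho_u)\supseteq A$ touching $A$ at $x_u$ with outer normal $u$; the 1-Lipschitz projection $(x,u)\mapsto u$ then gives $\sH^{d-1}(\sigma(U))\ge\sH^{d-1}(U)>0$. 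That part of the argument, including the elementary verification that such pairs lie in $N(A)$ with $\dl(A,x_u,u)=\iy$, is correct and is in fact cleaner than the paper's separate treatment of the two cases $x\notin\pa A$ and $x\in\pa A\setminus A$.

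Where you have a genuine gap is the step you yourself flag as the main obstacle: the pointwise monotonicity
\[
k_i(A,x_u,u)\ \ge\ k_i\bigl(\ov B(x_u-\rho_u u,\rho_u),x_u,u\bigr)=\rho_u^{-1}\qquad(i=1,\dots,d-1)
\]
for the \emph{generalized} principal curvatures of an arbitrary compact set. You invoke it and sketch a distance-function comparison, but this is not carried out, and it is not a known off-the-shelf fact in the generality needed. The $k_i(A,\cdot)$ are defined only $\sH^{d-1}$-a.e.\ via the approximate tangent planes of the countably rectifiable set $N(A)$; the containment $A\subset\ov B$ does not directly order those tangent planes against the (smooth) tangent plane of $N(\ov B)$ at $(x_u,u)$, since $N(A)\not\subset N(\ov B)$ near that point. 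Making the comparison rigorous would require first passing to the parallel set $A_r$, proving twice-differentiability of $\dist(\cdot,A)$ at the right points, comparing second derivatives of $\dist(\cdot,A)$ and $\dist(\cdot,\ov B)$, and then transferring back to $k_i(A,x_u,u)$ via the $T_r$-map---a nontrivial amount of work that is essentially the remaining content of the proposition. The paper sidesteps this entirely: it uses that the Gauss maps $g_A$ and $g_C$ agree on the contact set $B=N_+(C)\cap\hat N(A)$ and that their Jacobians are $H_{d-1}(A,\cdot)$ and $H_{d-1}(C,\cdot)$, so the area formula gives the pointwise identity $H_{d-1}(A,\cdot)=H_{d-1}(C,\cdot)$ a.e.\ on $B$. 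Since $H_{d-1}(C,\cdot)>0$ on $B$ and each $k_i(A,\cdot)\ge0$ there (because $\dl=\iy$), non-vanishing of the product forces every $k_i(A,\cdot)>0$. If you want to keep your route, you would need to either justify the monotonicity from scratch or replace it by this Gauss-map Jacobian trick, which achieves the same conclusion with less machinery.
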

\begin{proof}
   Note that for any compact set $A$, the normal bundle $N(A)$ is countably $(d-1)$-rectifiable (in the sense of Federer) and that at $\sH^{d-1}$-almost all $(x,u)\in N(A)$ generalized principal curvatures are well-defined, cf.~\cite[proof of Theorem 2.1]{HugLasWeil}.
   Let $\hat N(A)\subset N(A)$ denote a subset of full measure such that generalized principal curvatures are defined for all $(x,n)\in \hat N(A)$ and let
   $$
   N_+(A):=\{(x,u)\in \hat N(A): \dl(A,x,u)=\infty \text{ and } k_i(A,x,u)>0 \text{ for } i=1,\ldots,d-1\}.
   $$
   It suffices to show that $\sH^{d-1}(N_+(A))>0$. Assume first that $A$ is convex. It is well-known that in this case $\dl(A;\cdot)\equiv\infty$ on $N(A)$ and, for all $(x,u)\in \hat N(A)$, the generalized principal curvatures are nonnegative. Moreover, $\mu_0(A,\cdot)=C_0(A,\cdot)$ (i.e., the support measure coincides with the generalized curvature measure) and the latter is a positive measure with strictly positive total mass (in fact $C_0(A)=1$).

    We will argue by contradiction, hence, assume now that $\sH^{d-1}(N_+(A))=0$. Observe that for each $(x,u)\in \hat N(A)\setminus N_+(A)$ there is at least one index $i$ such that $k_i(A,x,u)=0$ implying that
   \begin{align}
      H_{d-1}(A,x,u):=\prod_{i=1}^{d-1} \frac{k_i(A,x,u)}{\sqrt{1+k_i(A,x,u)^2}}=0.
   \end{align}
   Thus, by the integral representation of $C_0(A,\cdot)$, see e.g.\ \cite[(2.24)]{HugLasWeil} or \cite[Def. 4.28]{RZ19}, we conclude that $C_0(A)=C_0(A,\hat N(A))=C_0(A,N_+(A))+C_0(A,\hat N(A)\setminus N_+(A))=0$, a contradiction. Hence $\sH^{d-1}(N_+(A))>0$.

   Now let $A\subset\R^d$ be an arbitrary compact set and denote by $C$ the convex hull of $A$. Then,
   by the previous case, $\sH^{d-1}(N_+(C))>0$. We claim that $\sH^{d-1}(N_+(C)\setminus \hat N(A))=0$, which implies that almost all $(x,u)\in N_+(C)$ are also elements of $\hat N(A)$. For a proof of the claim note first that, by definition, $N(A)\setminus\hat N(A)$ is a null set and therefore it is enough to show that $\sH^{d-1}(N_+(C)\setminus N(A))=0$. To prove this, let $(x,u)\in N_+(C)\setminus N(A)$.
   Then either $x\notin \partial A$ or $x\in \partial A$ but there is no point $z\in\R^d\setminus A$ with $\Pi_A(z)=(x,u)$. The latter case is not possible, since apparently there is some $z'\in\R^d\setminus C$ with $\Pi_C(z')=(x,u)$ and then $x\in A\subset C$ implies that $x$ is also the nearest point of $z'$ in $A$ (and the normal direction is determined by $x$ and $z'$). Hence $\Pi_A(z')=(x,u)$ and so $(x,u)\in N(A)$. Therefore we are left with the case $x\notin A$. Since $x\in \partial C$, we can find a nontrivial convex combination for $x$, i.e., there exist $x_1,\ldots, x_d\in \partial A$ and $\lambda_1,\ldots,\lambda_d\geq 0$ such that $x=\sum_i \lambda_i x_i$, $\sum_i \lambda_i=1$ and without loss of generality $\lambda_1>0$ and $\lambda_2>0$. This implies $\lambda_1<1$. Setting $\bar\lambda_i:=\lambda_i/(1-\lambda_1)$ for $i=2,\ldots,d$, we find that the point $y:=\sum_{i=2}^d \bar\lambda_i x_i$ is a convex combination of points in $\partial A$ and thus an element of $C$. Moreover, $x$ is in the interior of the segment $[x_1,y]\subseteq C$. (Indeed, by construction, $x$ is a nontrivial convex combination of the endpoints: $x=\lambda_1 x_1 +(1-\lambda_1)y$ with $\lambda_1\notin\{0,1\}$.)
   It follows that the boundary of $C$ is flat at $x$ in direction $v:=x_1-y$. More precisely, for any $r>0$ the directional curvature of the parallel set $C_r$ at $x+ur$ in direction $v$, i.e.\ the directional derivative of the Weingarten mapping of $C_r$ at $x+ur$ in direction $v$, is zero. This implies that at least one of the eigenvalues of this mapping is zero and so at least one of the generalized principal curvatures of $C$ at $(x,u)$ vanishes, in contradiction to the assumption $(x,u)\in N_+(C)$. We have thus shown that the set $N_+(C)\setminus N(A)$ is in fact empty.

    Now let $B:=N_+(C)\cap \hat N(A)$. By the above, we have $\sH^{d-1}(B)=\sH^{d-1}(N_+(C))>0$. Note that at all $(x,u)\in B$, the generalized principal curvatures are defined for both sets, $A$ and $C$, and satisfy $\delta(A,x,u)=\infty$, since $\delta$ is monotone with respect to set inclusion and $A\subset C$. We claim that
    \begin{align} \label{eq:Gauss-equal}
       H_{d-1}(A,x,u)= H_{d-1}(C,x,u) \text{ for } \sH^{d-1}\text{-a.a. } (x,u) \in B.
    \end{align}
     To see this, consider the Gauss maps $g_A:N(A)\to\eS^{d-1}$, $(x,u)\mapsto u$ and $g_C:N(C)\to\eS^{d-1}$, $(x,u)\mapsto u$ and note that these maps coincide on the set $B$. Moreover, the Jacobians of $g_A$ and $g_C$ are given by $H_{d-1}(A,x,u)$ and $H_{d-1}(C,x,u)$, respectively (and defined everywhere on $B$). Hence, we get for any Borel subset $B'\subset B$
     \begin{align*}
          \int_{B'} H_{d-1}(A,x,u) \sH^{d-1}(\d(x,u))&=\sH^{d-1}(g_A(B'))=\sH^{d-1}(g_C(B'))\\
          &=\int_{B'} H_{d-1}(C,x,u) \sH^{d-1}(\d(x,u)).
     \end{align*}
     We conclude that \eqref{eq:Gauss-equal} holds as claimed. From \eqref{eq:Gauss-equal} we infer that for $\sH^{d-1}$-a.a.\ $(x,u)\in B$ all generalized principal curvatures do not vanish. Taking into account that they cannot be negative either, since $\dl(A,x,u)=\infty$, we infer that they are strictly positive.
     Passing from $B$ to this subset of same $\sH^{d-1}$-measure, we have found a set with the desired properties.

      To prove the last claim of the proposition note that $H_{d-1}(A,x,u)>0$ for any $(x,u)\in B$, since all the generalized principal curvatures are strictly positive, and therefore
   $$
   \mu_0(A;B)=\omega_d^{-1}\int_{B} H_{d-1}(A,x,u) \sH^{d-1}(\d(x,u))>0,
   $$
   since $\sH^{d-1}(B)>0$.
   If $i>0$ then one can still conclude that $H_{d-1-i}(A,x,u)\geq 0$ for any $(x,u)\in B$ hence $\mu_i(A;B)\geq 0$ follows.
\end{proof}

\begin{remark} 
 In \cite[Propositions 2.2 and 2.3]{Last06}, it is stated that, for any nonempty compact set $A\subset\R^d$, the set $B_\infty:=\{(x,u)\in N(A): \delta(A,x,u)=\infty\}$ satisfies $\mu_i(A;B_\infty\cap\cdot)\geq 0$ for any $i\in I_d$ and $\mu_0(A;B_\infty)=1$. Note that the set $B$ in Proposition~\ref{prop:pos-curv} is a subset of $B_\infty$ and so the assertion $\mu_i(A;B)\geq 0$ in Proposition~\ref{prop:pos-curv} may also be derived from this result. The assertion $\mu_0(A;B)> 0$ does not follow immediately from this. However, in the applications of this result in the proofs below one could alternatively argue with the set $B_\infty$ instead of $B$.

  Note that the strict positivity of the generalized principal curvatures, which leads to $H_{d-1}(A,x,u)>0$ does not imply the positivity of any of the other functions $H_{d-1-i}$. If all the generalized principal curvatures are strictly positive at some $(x,u)\in B$, then some of them may be $+\infty$, which can also lead to $H_{d-1-i}(A,x,u)=0$ for any $i>0$.  It is possible that this happens for all $(x,u)\in B$. Indeed, as a simple example consider $A=\{0\}\subset\R^d$, for which $H_{d-1-i}(A,\cdot)\equiv 0$ for $i\neq 0$.
\end{remark}

As a next step towards a proof of Theorem~\ref{thm:basic-exp} we show that $\udimout_M$ is upper bounded by the maximal upper basic scaling exponent.

\begin{proposition}
	\label{prop:mink-basic-ineq}
	For any compact subset $A\subset\R^d$, 
	\begin{equation}
	\label{eq:mink-basic-ineq}
		\udimout_M A\leq\max_{i\in I_d}\ubex_i(A). 
	\end{equation}
\end{proposition}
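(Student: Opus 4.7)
\smallskip
\noindent\textbf{Proof plan.} The plan is to apply the general Steiner formula \eqref{eq:v-par} to express $V(A_\eps\setminus A)$ as a sum of integrals of the basic functions $\fb_i$, and then to use the definition of the upper basic scaling exponents to bound each of these integrals. Denote $M:=\max_{i\in I_d}\ubex_i(A)$. If $M\geq d$ there is nothing to prove, since $\udimout_M A\leq d$ always. Hence I may assume $M<d$ and fix an arbitrary $q\in(M,d)$; it suffices to show $\uMinkout{q}(A)=0$.

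For each $i\in I_d$, the choice $q>\ubex_i(A)$ gives $\uBC_i^{\var,q}(A)=\limsup_{t\to 0^+}t^{q-i}\fbv_i(t)=0$ (with the convention $\fbv_i\equiv 0$ whenever $\mu_i(A;\cdot)\equiv 0$). Therefore, given any $\eta>0$, I can choose $\delta>0$ (using that $I_d$ is finite) such that
\begin{equation*}
\fbv_i(t)\leq \eta\, t^{\,i-q}\quad\text{for all } t\in(0,\delta] \text{ and all } i\in I_d.
\end{equation*}
Since $\fb_i(t)\leq \fbv_i(t)$ pointwise (as $\fbv_i-\fb_i=2\fb_i^-\geq 0$) and $t^{d-i-1}>0$, the Steiner formula \eqref{eq:v-par} yields, for $0<\eps\leq\delta$,
\begin{equation*}
V(A_\eps\setminus A)\;\leq\;\sum_{i=0}^{d-1}\o_{d-i}\int_0^{\eps}t^{d-i-1}\fbv_i(t)\,\d t\;\leq\;\eta\sum_{i=0}^{d-1}\o_{d-i}\int_0^{\eps}t^{d-q-1}\,\d t\;=\;\frac{\eta\,\eps^{d-q}}{d-q}\sum_{i=0}^{d-1}\o_{d-i},
\end{equation*}
where I used $q<d$ to evaluate the inner integral. (The integrals are finite; this follows from \cite[Thm.~2.1]{HugLasWeil} via Fubini.)

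Multiplying through by $\eps^{q-d}$ and taking $\limsup_{\eps\to 0^+}$ gives $\uMinkout{q}(A)\leq C\eta$ with $C=(d-q)^{-1}\sum_i \o_{d-i}$ independent of $\eta$. Since $\eta>0$ was arbitrary, $\uMinkout{q}(A)=0$, hence $\udimout_M A\leq q$; letting $q\downarrow M$ completes the proof. I do not anticipate any significant obstacles; this direction is essentially an elementary consequence of \eqref{eq:v-par} together with the definitions, with the only delicate point being the harmless bookkeeping around indices for which $\mu_i(A;\cdot)$ vanishes and the strict inequality $q<d$ needed to integrate $t^{d-q-1}$.
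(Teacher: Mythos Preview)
The proposal is correct and takes essentially the same approach as the paper: both apply the Steiner formula \eqref{eq:v-par} and bound the integrands using $q>\ubex_i$. The only cosmetic difference is that the paper argues by contradiction and invokes the monotonicity of $\fbv_i$ (Proposition~\ref{lem:parallel-supp-finite}) to get a uniform constant bound on $t^{q-i}\fbv_i(t)$, whereas you argue directly, using that $\limsup=0$ for a nonnegative quantity forces the actual limit to vanish, which yields your $\eta$-$\delta$ bound; both routes lead to the same integral estimate and conclusion.
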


\begin{proof}
    Assume that $A$ is such that \eqref{eq:mink-basic-ineq} is not true. Then we can choose some $q$ such that  $\max\{\ubex_i:i\in I_d\}<q<\udimout_M A$. For each $i\in I_d$, this implies
    $q>\ubex_i$ and thus
    $\uBC_i^{\var,q}(A)=0$.
    Hence there is some positive constant $C_i$ such that $t^{q-i}\fbv_{i}(t)\leq C_i$ holds for all $t\in(0,1]$, by the monotonicity of $\fbv_i$ established in Proposition~\ref{lem:parallel-supp-finite}. 
	Now we infer from the Steiner formula \eqref{eq:v-par} that, for any $\e\in(0,1]$,
	\begin{align}
		V(A_{\e}\setminus A)&\leq \sum_{i=0}^{d-1}\o_{d-i}\int_0^{\e}t^{d-i-1}\fbv_i(t)\d t\leq\sum_{i=0}^{d-1}\o_{d-i}C_i\int_0^{\e}t^{d-i-1}t^{i-q}\d t\nn\\
		&=\frac{\e^{d-q}}{d-q}\sum_{i=0}^{d-1}\o_{d-i}C_i\nn.
	\end{align}
	This shows that $\e^{q-d}V(A_{\e}\setminus A)$ is bounded as $\e\to 0^+$, which contradicts the assumption $q<\udimout_MA$.	
\end{proof}

We point out that the corresponding argument fails for $\ldimout_M$ and the lower basic scaling exponents $\lbex_i$.\\

{\bf The outer box dimension.} \label{page:dimout-box} For any compact set $A\subset\R^d$ and any $t>0$, let
\begin{align} \label{eq:MtA}
    M_t(A):=\left\{x\in\partial A:\exists u\in \eS^{d-1} \text{ with } t<\dl(A,x,u) \right\}.
\end{align}
Let $\Theta_t(A)$ be the minimal number of open sets of diameter at most $t$ needed to cover the set $M_t(A)$. We define the \emph{upper outer box dimension} of $A$ by
\begin{align} \label{eq:dimout-box-def}
    \udimout_B A:=\limsup_{t\to 0^+}\frac{\log \Theta_t(A)}{-\log t}.
\end{align}
Similarly, the \emph{lower outer box dimension} $\ldimout_B$ is introduced by writing $\liminf$ instead of $\limsup$. If $\ldimout_B A=\udimout_B A$, then we call the common value the \emph{outer box dimension} of $A$ and denote it by $\dimout_B A$.

The outer box dimension can also be characterized as follows.
\begin{lemma}
    \label{lem:outer-box}
    Let $A\subseteq\R^d$ be a compact set. For any $t>0$, let $\Theta'_t(A)$ be the maximal number of disjoint open balls of radius $t$ centered in $M_t(A)$. Then, in the definition of the (upper/lower) box dimension, $\Theta_t(A)$ can be equivalently replaced by $\Theta'_t(A)$, i.e.,
  $$
  \udimout_B A=\limsup_{t\to 0^+}\frac{\log \Theta_t'(A)}{-\log t} \quad  \text{ and }\quad \ldimout_B A=\liminf_{t\to 0^+}\frac{\log \Theta_t'(A)}{-\log t}.
  $$
\end{lemma}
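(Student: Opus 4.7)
The plan is to mimic the classical argument showing the equivalence of covering and packing characterizations of box dimension, while tracking carefully that both $M_t(A)$ and the relevant scales depend on $t$. As a preliminary, I observe that $M_t(A)\subseteq\partial A$ is bounded (since $A$ is compact) and nonempty by Proposition~\ref{prop:pos-curv} (which provides $(x,u)\in N(A)$ with $\dl(A,x,u)=\infty$); hence both $\Theta_t(A)$ and $\Theta'_t(A)$ are finite positive integers. I will also use the monotonicity $M_s(A)\supseteq M_t(A)$ for $0<s\leq t$, which is immediate from the definition \eqref{eq:MtA}.

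The heart of the argument is the two-sided sandwich
\[
\Theta_{4t}(A)\leq \Theta'_t(A)\leq \Theta_t(A)
\]
for every sufficiently small $t>0$. For the right inequality I would fix a maximum disjoint packing of open balls of radius $t$ centered at $x_1,\ldots,x_n\in M_t(A)$ (so $n=\Theta'_t(A)$) together with an arbitrary cover $\{U_j\}_{j=1}^{m}$ of $M_t(A)$ by open sets of diameter at most $t$. Disjointness of the balls forces $|x_i-x_j|\geq 2t$ for $i\neq j$, so no set of diameter $\leq t$ can contain two of the centers; since each $x_i$ lies in some $U_j$, one obtains $n\leq m$, and minimizing over covers yields the bound. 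For the left inequality I would invoke maximality: if some $x\in M_t(A)$ satisfied $|x-x_i|\geq 2t$ for every $i$, one could enlarge the packing by the ball of radius $t$ around $x$, a contradiction. Hence $M_t(A)$ is covered by the open balls of radius $2t$ around the $x_i$, and using $M_{4t}(A)\subseteq M_t(A)$ these $n$ balls (each of diameter $\leq 4t$) form a cover of $M_{4t}(A)$, giving $\Theta_{4t}(A)\leq n$.

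Taking logarithms and dividing by $-\log t$ (positive for small $t$), the right inequality gives $\limsup_{t\to 0^+}\log\Theta'_t(A)/(-\log t)\leq \udimout_B A$ and analogously for the lower limit. For the reverse direction I would substitute $s=4t$, writing
\[
\frac{\log\Theta_{4t}(A)}{-\log t}=\frac{\log\Theta_{s}(A)}{-\log s+\log 4},
\]
and observe that since $-\log s\to\infty$ the additive constant $\log 4$ in the denominator is asymptotically negligible, so the upper (respectively lower) limit as $t\to 0^+$ of the left-hand side coincides with $\udimout_B A$ (respectively $\ldimout_B A$). Combining both directions yields the claimed identities.

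The argument is essentially classical; the only deviation from the standard proof for a fixed ambient set is the need to compare covers of $M_{4t}(A)$ with packings inside the larger set $M_t(A)$, which is handled by the monotonicity of the family $\{M_t(A)\}_{t>0}$. I therefore expect no genuine obstacle beyond routine metric-geometry bookkeeping.
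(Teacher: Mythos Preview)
Your proposal is correct and follows essentially the same approach as the paper: both establish the sandwich $\Theta_{4t}(A)\leq \Theta'_t(A)\leq \Theta_t(A)$ via the standard covering/packing comparison, using the monotonicity $M_{4t}(A)\subseteq M_t(A)$ for the left inequality, and then pass to the limit. Your version is slightly more explicit about preliminaries (finiteness and positivity of $\Theta_t$, $\Theta'_t$, and the monotonicity of $M_t$), but the argument is the same.
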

\begin{proof}
    Let $B(x_1,t),\ldots,B(x_J,t)$ be disjoint open balls with $x_j\in M_t(A)$, $j=1,\ldots,J$, which form a maximal packing (in the sense that there is no ball centred in $M_t(A)$ of radius $t$ and disjoint to the balls above). Then, on the one hand, the balls $B(x_1,2t),\ldots,B(x_J,2t)$ form a $4t$-covering of $M_t(A)$ and thus in particular of the subset $M_{4t}(A)$. This implies
    $$
    \Theta_{4t}(A)\leq \Theta'_t(A).
    $$
    On the other hand, any $t$-covering of $M_t(A)$ must cover the set $\{x_1,\ldots,x_J\}\subset M_t(A)$, for which at least $J$ sets of diameter $t$ are necessary, since $|x_j-x_l|\geq 2t$ for $j\neq l$. Hence
    $$
    \Theta_{t}(A)\geq \Theta'_t(A).
    $$
    Now the claimed equalities
    follow from these two inequalities.
\end{proof}
As a next step we relate the outer box dimension to the outer Minkowski dimension.
\begin{proposition} \label{prop:outer-box-vs-Mink}
     For any compact set $A\subset\R^d$,
    \begin{align*}
        \udimout_B A= \udimout_M A \quad \text{ and } \quad \ldimout_B A\leq \ldimout_M A.
    \end{align*}
\end{proposition}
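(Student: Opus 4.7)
The plan is to mimic the classical equivalence between box and Minkowski dimensions, exploiting the defining property of $M_t(A)$: at every $x \in M_t(A)$ there is a unit vector $u$ with $\dl(A,x,u) > t$, giving a ``protected'' normal direction from which to construct controlled pieces of the outer parallel set.

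\textbf{Step 1 (packing gives a volume lower bound).} This step handles the easy direction $\udimout_B A \le \udimout_M A$ and simultaneously yields $\ldimout_B A \le \ldimout_M A$. Using the packing characterization from Lemma~\ref{lem:outer-box}, let $x_1,\ldots,x_J \in M_t(A)$ with $J=\Theta'_t(A)$ be the centers of a maximal family of disjoint open balls $B(x_j,t)$, and pick $u_j \in \eS^{d-1}$ with $\dl(A,x_j,u_j)>t$. The key geometric observation is that the open ball $B(x_j+(t/2)u_j,\,t/2)$ is contained in $A_t \setminus A$: since $\dl(A,x_j,u_j)>t$ implies $\pi_A(x_j+s u_j)=x_j$ for all $s\in[0,t]$, the center lies at distance exactly $t/2$ from $A$, so the open ball avoids $A$, while its points all lie within distance $t$ of $x_j\in A$. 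Centers at pairwise distance $\ge 2t$ stay at pairwise distance $\ge t$ after the $(t/2)$-shifts, so the $J$ shifted balls are pairwise disjoint. Hence
\[
V(A_t \setminus A) \;\ge\; \Theta'_t(A)\,\kappa_d\,(t/2)^d.
\]
Dividing by $t^d$, taking logarithms and passing to $\limsup$ gives $\udimout_B A \le \udimout_M A$; the lower counterpart follows by restricting to a sequence $\e_n\to 0^+$ realizing $\lMinkout{q}(A)=0$ for any $q>\ldimout_M A$.

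\textbf{Step 2 (covering gives a volume upper bound).} For the harder direction $\udimout_M A \le \udimout_B A$, I would fix $q > \udimout_B A$ and bound one dyadic annulus at a time. Let $\{V_l\}_{l=1}^{\Theta_{t/2}(A)}$ cover $M_{t/2}(A)$ by sets of diameter $\le t/2$, each enclosed in a ball $B(y_l,t/2)$. The crucial claim is that for almost every $z \in A_t \setminus A_{t/2}$ the (a.e.\ unique) nearest point $x:=\pi_A(z)$ lies in $M_{t/2}(A)$: the direction $u=(z-x)/|z-x|$ witnesses $\dl(A,x,u)\ge |z-x|>t/2$, and $(x,u)\in N(A)$ because $z\in\Unp(A)$, which holds for almost every $z$ since $\R^d\setminus \Unp(A)$ is Lebesgue-null. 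If $x\in V_l$, then $|z-y_l|\le |z-x|+|x-y_l|\le t+t/2=3t/2$, so $\{B(y_l,3t/2)\}_l$ covers $A_t\setminus A_{t/2}$ up to a null set, yielding
\[
V(A_t\setminus A_{t/2}) \;\le\; \Theta_{t/2}(A)\,\kappa_d\,(3t/2)^d \;\le\; C\, t^{d-q}.
\]
Telescoping $V(A_\e\setminus A)=\sum_{k\ge 0} V(A_{\e 2^{-k}}\setminus A_{\e 2^{-k-1}})$ then produces a geometric series that converges whenever $q<d$ (the only interesting range), giving $V(A_\e\setminus A)\le C'\e^{d-q}$ and hence $\udimout_M A \le q$.

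\textbf{Main obstacle.} The dyadic telescoping in Step~2 is intrinsically a $\limsup$-style argument, as controlling $V(A_\e\setminus A)$ demands good covering bounds at \emph{all} scales below $\e$ simultaneously, rather than only at $\e$. This explains both the equality of the upper dimensions and the impossibility of reversing the inequality for the lower dimensions along these lines---an asymmetry perfectly parallel to the classical gap between the lower box and the lower Minkowski dimensions.
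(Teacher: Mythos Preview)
Your proof is correct. Step~1 follows the paper's own argument almost exactly; the only cosmetic difference is that you place disjoint shifted balls $B(x_j+\tfrac{t}{2}u_j,\tfrac{t}{2})$ inside $A_t\setminus A$, whereas the paper uses lens-shaped sets $B(x_j,t)\cap B(x_j+tu_j,t)$. Both constructions yield the same volume lower bound up to a dimensional constant.

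Step~2, however, is a genuinely different and more elementary route than the paper's. The paper does \emph{not} bound $V(A_\e\setminus A)$ directly. Instead it passes through the $S$-dimension: it invokes the equality $\udimout_M A=\udim_S A$ from \cite{RaWi2010}, then covers $\partial A_r$ (for $r=Rt$, $R>\sqrt{2}$) by the preimages $\pi_A^{-1}(B_j)\cap\partial A_r$ of a minimal $t$-cover of $M_t(A)$, and controls each $\sH^{d-1}(\pi_A^{-1}(B_j)\cap\partial A_r)$ via a surface-area estimate for parallel sets due to Z\"ahle \cite{Zahle2011}. This yields $\udim_S A\le\udimout_B A$ and hence the claim. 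Your argument bypasses both external citations: the observation that $\pi_A$ maps (almost all of) the annulus $A_t\setminus A_{t/2}$ into $M_{t/2}(A)$ gives the annular bound $V(A_t\setminus A_{t/2})\le \kappa_d(3t/2)^d\,\Theta_{t/2}(A)$ directly, and dyadic summation closes the argument. The trade-off is that the paper's detour through $\udim_S$ fits naturally with later results relating $\sex_{d-1}$ to the $S$-content, while your approach is self-contained and arguably cleaner for the statement at hand. Your final remark on why the lower-dimension inequality cannot be reversed by this method is also apt and matches the paper's own caveat.
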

 \begin{proof}
     For the '$\leq$'-relations, we use the characterization of the outer box dimension
     given in Lemma~\ref{lem:outer-box}. Let $t>0$ and let $B(x_1,t),\ldots, B(x_J,t)$ be disjoint open balls centered in $M_t(A)$ which form a maximal packing in the sense that $J=\Theta'_t(A)$. Then, for any $j=1,\ldots,J$, since $x_j\in M_t(A)$, there exists $u_j\in\eS^{d-1}$ such that $y_j:=x_j+t u_j\in\partial A_t$. This implies that $C_j:=B(x_j,t)\cap B(y_j,t)\subset A_t\setminus A$. (Indeed, $C_j\subset B(x_j,t)\subseteq A_t$ and $\pi_A(y_j)=x_j$, which implies $B(y_j,t)\cap A=\emptyset$ and thus $C_j\cap A=\emptyset$.)
     Moreover, since the balls $B(x_j,t)$ are disjoint, the sets $C_1,\ldots,C_J$ are disjoint. We conclude that
     \begin{align*}
         V(A_t\setminus A)\geq \sum_{j=1}^J V(C_j)=\Theta'_t(A) \kappa_d^{\rm lens} t^d,
     \end{align*}
     where $\kappa_d^{\rm lens}$ is the volume of a \emph{unit lens}, i.e., the intersection of two balls of radius $1$ with centers at distance $1$ from each other. For the last equality we used that all the sets $C_j$ are scaled copies of such a unit lens scaled by the same factor $t$ and the scaling properties of the volume. From this inequality the '$\leq$'-relations are easily derived.

   (Indeed, let $s> \udimout_M A$. Then, by definition, $V(A_t\setminus A) t^{s-d}\to 0$ as $t\to 0^+$, and so, by the above inequality,  $t^s\Theta'(A)\to 0$   as $t\to 0^+$, which implies $s\geq \udimout_B A$. Since this holds for any $s> \udimout_M A$, the relation $\udimout_B A\leq \udimout_M A$ follows. Similarly, if $s> \ldimout_M A$, then there is a decreasing null sequence $(t_n)_{n\in\N}$ such that $V(A_{t_n}\setminus A) t_{n}^{s-d}\to 0$ as $t\to 0^+$, and so, by the above inequality,  $t_n^s\Theta'(A)\to 0$   as $n\to \infty$, which implies $s\geq \ldimout_B A$. Since this holds for any $s> \ldimout_M A$, also the relation $\ldimout_B A\leq \ldimout_M A$ follows.)

    It remains to show that $\udimout_B A\geq \udimout_M A$. For this we use the notion of $S$-dimension as introduced in \cite{RaWi2010}. Recall that $\udim_S A:=\inf\{s:\uS^s(A)=0\}$, where 
     \begin{align}\label{eq:S-content}
     \uS^s(A):=\limsup_{r\to 0^+} \frac{\sH^{d-1}(\partial A_r)}{(d-s)\kappa_{d-s} r^{d-1-s}}
     \end{align}
     is the upper $s$-dimensional \emph{$S$-content} of a compact set $A\subset\R^d$.
     It follows from \cite[Corollary 3.6]{RaWi2010} that $\udimout_M A=\udim_S A$. In fact, the equation is stated there with $\udim_M$ instead of $\udimout_M$ and only for sets $A$ with $V(A)=0$, but from inspection of the proof of \cite[Corollaries 3.4 and 3.6]{RaWi2010} it is clear that the assumption $V(A)=0$ can be dropped if $\udim_M$ is replaced by $\udimout_M$.

     Let $t>0$ and let $B_1,\ldots, B_J$ be a minimal $t$-covering of the set $M_t(A)$ (i.e., $\diam(B_j)\leq t$ for $j=1,\ldots, J$ and $J=\Theta_t(A)$). Fix $R>\sqrt{2}$ and let $r=Rt$. Then the sets $B_j':=\pi_A^{-1}(B_j)\cap\partial A_r$, $j=1,\ldots,J$ form a covering of the set $\partial A_r$ and hence
     \begin{align} \label{eq:area-est}
         \sH^{d-1}(\partial A_r)\leq \sum_{j=1}^J \sH^{d-1}(B_j')\leq \sum_{j=1}^J \sH^{d-1}(\partial (B_j)_r) \leq J c r^{d-1}.
     \end{align}
     Here the second inequality is due to the set inclusion $B_j'\subset \partial (B_j)_r$ and the last one follows from \cite[Theorem 4.1]{Zahle2011}, which is applicable since $r=Rt\geq R\diam(B_j)$ for $j=1,\ldots,J$. Note that the constant $c$ is independent of $r$ (or $t$) and the sets $B_j$.

     Now let $D:=\udimout_B A$ and $\gamma>0$. Recalling that $r=Rt$ and $J=\Theta_t(A)$, we conclude from \eqref{eq:area-est} that
     \begin{align*}
         r^{(D+\gamma)-(d-1)}\sH^{d-1}(\partial A_r)\leq r^{(D+\gamma)-(d-1)}\Theta_t(A) c r^{(d-1)}=c R^{D+\gamma} t^{D+\gamma} \Theta_t(A) \to 0
     \end{align*}
     as $t\to 0^+$, by definition of $\udimout_B A$. This shows that $\uS^{D+\gamma}(A)=0$. Since $\gamma>0$ was arbitrary, it follows that $\udim_S A\leq D=\udimout_B A$.
 \end{proof}

 \begin{remark} One may ask whether the inequality $\ldimout_B A\leq \ldimout_M A$ in Proposition~\ref{prop:outer-box-vs-Mink} is in fact an equality in general. We believe that this not the case, but we have not tried to find a corresponding example. From the estimate \eqref{eq:area-est} in above proof one can deduce that $\ldim_S A\leq \ldimout_B A$, where $\ldim_S$ denotes the lower S-dimension. From examples in \cite{W11,KLV13} it is known that $\ldim_S$ can be strictly smaller than $\ldim_M$ (and thus $\ldimout_M$). We expect that $\ldimout_B$ can assume any value between $\ldim_S$ and $\ldimout_M$. We leave this as an open question.
 \end{remark}
 We point out that for the proof of Theorem~\ref{thm:basic-exp} only the '$\leq$'-relations in Proposition~\ref{prop:outer-box-vs-Mink} are relevant, see the proof of Proposition~\ref{prop:basic-exp-upper-bd} below. Another ingredient in this proof is the following observation, which provides an upper bound on $\fbv_i(A;t)$ in terms of a decomposition into small pieces of the relevant part of $A$.
\begin{lemma}
    \label{lem:basic-est2-new}
    Let $A\subseteq\R^d$ be compact, $i\in I_d$ and $t>0$. Let $\{B_j\}_{j\in J}$ be a covering of the set $M_t(A)$ (defined in \eqref{eq:MtA})
    by open sets. Set $A_j:=\ov{B_j\cap A}$.
    Then,
    \begin{equation}
        \fbv_i(A;t)\leq\sum_{j\in J}\fbv_i(A_j;t).
    \end{equation}
\end{lemma}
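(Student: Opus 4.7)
The strategy is to rewrite $\fbv_i(A;t)$ as the total variation measure of the set $E:=\{(x,u)\in N(A):t<\dl(A,x,u)\}$, to decompose $E$ along the covering $\{B_j\}_{j\in J}$, and then to pass from $A$ to $A_j$ on each piece via the local definition property \eqref{eq:loc-defined}. Concretely, the first step is to observe that if $(x,u)\in E$, then the foot point $x\in\partial A$ satisfies $\dl(A,x,u)>t$, so $x\in M_t(A)$. Since $\{B_j\}_{j\in J}$ covers $M_t(A)$, the sets $E_j:=\{(x,u)\in E:x\in B_j\}$ (for $j\in J$) cover $E$, so by countable (or general) subadditivity of the positive measure $|\mu_i|(A;\cdot)$,
\begin{equation*}
    \fbv_i(A;t)=|\mu_i|(A;E)\leq \sum_{j\in J}|\mu_i|(A;E_j).
\end{equation*}

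Next I plan to show that each summand on the right-hand side equals $|\mu_i|(A_j;E_j)$. Since $B_j$ is open and $A$ is closed, an elementary argument gives $A_j\cap B_j=\overline{A\cap B_j}\cap B_j=A\cap B_j$; in particular $A$ and $A_j$ coincide on the open set $B_j$. The local definition property \eqref{eq:loc-defined} then yields $\mu_i(A;\cdot)=\mu_i(A_j;\cdot)$ as signed measures on Borel subsets of $B_j\times\eS^{d-1}$, hence the Jordan decompositions also agree there, so $|\mu_i|(A;B')=|\mu_i|(A_j;B')$ for every Borel $B'\subseteq B_j\times\eS^{d-1}$. In particular, taking $B'=E_j$ (which lies in $B_j\times\eS^{d-1}$ by construction) gives $|\mu_i|(A;E_j)=|\mu_i|(A_j;E_j)$.

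The final step is to check the inclusion $E_j\subseteq\{(x,u)\in N(A_j):t<\dl(A_j,x,u)\}$, which will immediately yield $|\mu_i|(A_j;E_j)\leq \fbv_i(A_j;t)$ and complete the proof. For $(x,u)\in E_j$ we have $x\in B_j\cap A=B_j\cap A_j\subseteq A_j$ and $\dl(A,x,u)>t$. Because $A_j\subseteq A$, for any $s<\dl(A,x,u)$ the uniqueness of $x$ as the nearest point of $x+su$ in $A$ forces the same in the smaller set $A_j$; this shows $\dl(A_j,x,u)\geq \dl(A,x,u)>t$, so in particular $(x,u)\in N(A_j)$ by picking any $y=x+su$ with $0<s<t$. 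Chaining the three steps gives $\fbv_i(A;t)\leq \sum_{j\in J}\fbv_i(A_j;t)$.

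The only mildly delicate point is the monotonicity $\dl(A_j,\cdot)\geq \dl(A,\cdot)$ on $N(A)\cap(A_j\times\eS^{d-1})$ together with the transfer of the local equality from signed to total variation measures; both follow cleanly from the definitions and the uniqueness of the Jordan decomposition, so I expect no serious obstacle.
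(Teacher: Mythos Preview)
Your proof is correct and follows essentially the same approach as the paper's own proof: both introduce the set $E=N_t(A)$, split it via $E_j=\hat B_j$ along the cover, use the identity $A\cap B_j=A_j\cap B_j$ together with the local definition property \eqref{eq:loc-defined} to transfer from $|\mu_i|(A;\cdot)$ to $|\mu_i|(A_j;\cdot)$ on $B_j\times\eS^{d-1}$, and finish with the monotonicity $\dl(A_j,\cdot)\geq\dl(A,\cdot)$ coming from $A_j\subseteq A$. The only cosmetic differences are notation and that you make the passage from signed measures to total variations via the Jordan decomposition slightly more explicit.
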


\begin{proof}
     Let $N_t(A):=\{(x,u)\in N(A):t<\dl(A,x,u)\}$ and, for any $j\in J$, let
     $$
     \hat B_j:=\{(x,u)\in N_t(A): x\in B_j\}.
     $$
     Then $N_t(A)= \bigcup_{j\in J}\hat B_j$. Indeed, $\hat B_j\subset N_t(A)$ by definition and if $(x,u)\in N_t(A)$, then $x\in M_t(A)$ and so $x\in B_j$ for some $j\in J$, since these sets form a covering of $M_t(A)$. This shows $(x,u)\in \hat B_j$. We infer that
    \begin{equation}
        \begin{aligned}
        \fbv_i(A;t)&=\int_{N_t(A)}|\mu_i|(A; \d(x,u))\\
         &\leq\sum_{j\in J}\int_{\hat B_j}\1{\{t<\dl(A,x,u)\}}|\mu_i|(A; \d(x,u)).
        \end{aligned}
    \end{equation}
   Observe now that $A\cap B_j=A_j\cap B_j$, for the open sets $B_j$.
   Since the measure $\mu_i$ is locally defined, cf.\ e.g.~\cite[\S 4.4]{HugLasWeil}, we infer that
   $\mu_i(A;\cdot\cap (B_j\times\eS^{d-1}))=\mu_i(A_j;\cdot\cap (B_j\times\eS^{d-1}))$ and  so also $|\mu_i|(A;\cdot\cap (B_j\times\eS^{d-1}))=|\mu_i|(A_j;\cdot\cap (B_j\times\eS^{d-1}))$.
   Since $\hat B_j\subset  B_j\times\eS^{d-1}$, we have in particular $|\mu_i|(A;\cdot\cap \hat B_j)=|\mu_i|(A_j;\cdot\cap \hat B_j)$. Furthermore,
   $N(A)\cap (B_j\times \eS^{d-1})=N(A_j)\cap (B_j\times \eS^{d-1})$ and $t<\dl(A,x,u)\leq\dl(A_j,x,u)$ for all $(x,u)\in N(A)\cap  \hat B_j=N(A_j)\cap\hat B_j$. (The latter inequality holds simply because $A_j\subset A$.) We conclude that
   \begin{align*}
       \fbv_i(A;t)&\leq\sum_{j\in J}\int_{N(A_j)\cap\hat B_j}\1{\{t<\dl(A_j,x,u)\}}|\mu_i|(A_j; \d(x,u))\\
       &\leq\sum_{j\in J} \int_{N(A_j)}\1{\{t<\dl(A_j,x,u)\}}|\mu_i|(A_j; \d(x,u))
       =\sum_{j\in J}\fbv_i(A_j;t),
       \end{align*}
   where the second inequality holds by monotonicity.
\end{proof}

The final ingredient for establishing that $\udimout_MA$
is an upper bound for all the basic exponents $\ubex_i$ (and similarly $\ldimout_M$ an upper bound for $\lbex_i$), 
is the following estimate which we will prove later in Section \ref{sec:steiner}, see page \pageref{lem:basic-est-proof}.

\begin{proposition}
    \label{prop:basic-est}
    Let $R>\sqrt{2}$ and $i\in I_d$.
    There exists a constant $c_i>0$ such that
    \begin{equation*}
        \fbv_i(A;t)\leq c_it^i
    \end{equation*}
    for all compact sets $A\subseteq\R^d$ and all $t$ such that  $t\geq R\diam(A)$.
\end{proposition}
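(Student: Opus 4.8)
\textbf{Proof proposal.}

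The plan is to reduce the statement, by scaling, to a uniform bound and then to a purely geometric estimate on the generalized normal bundle. First I would \textbf{rescale}. Combining the homogeneity of degree $i$ of $\mu_i$ (see \eqref{eq:homogen}) with the elementary identities $N(\lambda A)=\{(\lambda x,u):(x,u)\in N(A)\}$ and $\dl(\lambda A,\lambda x,u)=\lambda\,\dl(A,x,u)$ for $\lambda>0$, one checks directly from \eqref{eq:m_i-tot} that $\fbv_i(\lambda A;\lambda t)=\lambda^i\,\fbv_i(A;t)$. Taking $\lambda=1/\diam(A)$ one gets $\fbv_i(A;t)=(\diam A)^i\,\fbv_i(A/\diam A;\,t/\diam A)$, and since $t\geq R\diam(A)$ forces $t/\diam A\geq R$, it suffices to prove a \emph{uniform} bound $\fbv_i(B;s)\leq c_i$ for some constant $c_i=c_i(d,i,R)$ and all compact $B\subseteq\R^d$ with $\diam(B)\leq 1$ and all $s\geq R$; indeed this then gives $\fbv_i(A;t)\leq c_i(\diam A)^i\leq c_i(t/R)^i\leq c_i t^i$ because $R\geq 1$.

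Second, I would \textbf{pass from $\mu_i$ to Hausdorff measure}. Writing $N_t(A):=\{(x,u)\in N(A):t<\dl(A,x,u)\}$, the integral representation \eqref{eq:support-formula} yields
\[
\fbv_i(A;t)=|\mu_i|(A;N_t(A))=\frac{1}{\omega_{d-i}}\int_{N_t(A)}|H_{d-1-i}(A,x,u)|\,\sH^{d-1}(\d(x,u)).
\]
From the explicit form \eqref{eq:Hj}, using that each factor satisfies $|k_l/\sqrt{1+k_l^2}|\leq 1$ and $1/\sqrt{1+k_m^2}\leq 1$ (also in the limiting case $k=+\infty$), one obtains $|H_{d-1-i}(A,x,u)|\leq\binom{d-1}{i}$ for $\sH^{d-1}$-almost all $(x,u)\in N(A)$, and hence $\fbv_i(A;t)\leq\omega_{d-i}^{-1}\binom{d-1}{i}\,\sH^{d-1}(N_t(A))$. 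Thus the whole statement reduces to the geometric claim that $\sH^{d-1}(N_t(A))$ is bounded by a dimensional constant whenever $\diam(A)\leq 1$ and $t\geq R>\sqrt 2$.

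Finally, I would \textbf{prove this geometric bound by comparison with the convex hull} $C$ of $A$. The role of the hypothesis $t>R\diam(A)$ with $R>\sqrt 2$ is that every $(x,u)\in N_t(A)$ is then almost a support element of $C$: from $\pi_A(x+tu)=x$, i.e.\ $|x+tu-a|\geq t$ for all $a\in A$, one gets $\langle u,a-x\rangle\leq|x-a|^2/(2t)\leq(\diam A)^2/(2t)=:\eta$ for all $a\in A$, hence for all $a\in C$, while $\eta<t/4$ since $R^2>2$. Therefore $C$ lies in the halfspace $\{y:\langle u,y\rangle\leq\langle u,x\rangle+\eta\}$, the point $x+tu$ has distance in $(\tfrac34 t,t)$ from $C$, and a short computation gives $|x-\pi_C(x+tu)|^2\leq 2t\eta=(\diam A)^2$. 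Using this one constructs a map $N_t(A)\to N(C)$ (sending $(x,u)$ to a support element of $C$ with normal $u$) that does not increase $\sH^{d-1}$ by more than a bounded factor, so that $\sH^{d-1}(N_t(A))\lesssim\sH^{d-1}(N(C))$; and $\sH^{d-1}(N(C))$ is uniformly bounded over convex bodies $C$ with $\diam(C)\leq 1$ (by monotonicity of the surface area of convex bodies together with analogous bounds on the lower-dimensional skeleton of $\partial C$). I expect \textbf{this comparison step to be the main obstacle}: the map to $N(C)$ collapses the `flat' pieces of $N_t(A)$ (parts of $\partial A$ contained in a supporting hyperplane, which may carry positive $\sH^{d-1}$-mass), and one has to control the total mass of these pieces using precisely that a point of $N_t(A)$ sees essentially no mass of $A$ in front of it. An alternative route — which would also explain why the proof is deferred to Section~\ref{sec:steiner} — is to derive the same bound from the Steiner-type formula \eqref{eq:mu-parallel-local} for the parallel set $A_t$: for $t\gg\diam A$ the set $A_t$ is trapped between $A_t$ and the convex capsule $C_t$, and one can estimate the support measures of $A_t$ through those of its own convex hull, where the intrinsic volumes are monotone and easy to bound at scale $t$.
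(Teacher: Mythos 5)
Your rescaling step and the bound $|H_{d-1-i}(A,x,u)|\leq\binom{d-1}{i}$ (hence $\fbv_i(A;t)\leq\omega_{d-i}^{-1}\binom{d-1}{i}\,\sH^{d-1}(N_t(A))$) are both correct and cleanly reduce the problem to a uniform bound on $\sH^{d-1}(N_t(A))$ for $\diam A\le 1$ and $t\ge R$. The third step, however, is where the argument breaks down, and you have already put your finger on why: the intended map $N_t(A)\to N(C)$ sending $(x,u)$ to a support element of $C$ with normal $u$ is wildly non-injective. Whenever $\partial A$ has a piece of positive $\sH^{d-1}$-measure lying (approximately) in a supporting hyperplane of $C$ with normal $u$, that entire piece is collapsed to a single fiber over $u$, so the map cannot control $\sH^{d-1}$ without an additional idea. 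Moreover, the $\eta$-slack you compute means that $(x,u)$ with $x\notin\partial C$ does not literally land in $N(C)$, and there is no evident Lipschitz correction with uniform constant. The observation that ``a point of $N_t(A)$ sees essentially no mass of $A$ in front of it'' is true but, as stated, does not by itself bound $\sH^{d-1}$ of the collapsed fibers. So the main route has a genuine gap rather than a missing routine estimate.

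The paper takes the road you sketch only as an aside, and with a different mechanism than convex-hull monotonicity (support measures are not monotone, so that route would not go through directly). Concretely, Proposition~\ref{prop:invert-rela} inverts the Steiner-type formula \eqref{eq:mu-parallel-local} to get the bound \eqref{eq:bound-basic-up}, $\fbv_i(\eps)\le 2\sum_{j\le i}|b_{i,j}|\,\eps^{i-j}|\mu_j|(A_\eps)$, which reduces everything to bounding $|\mu_j|(A_\eps)$ for $\eps\ge R\diam A$ (Proposition~\ref{lem:support-est}). That bound in turn uses the reflection inequality of Lemma~\ref{lem:normal-reflection}, $|\mu_j|(A_\eps)\le|\mu_j|(\widetilde{A_\eps})$, together with the fact (from \cite{HugLasWeil}) that for $\eps>\diam A$ the closed complement $\widetilde{A_\eps}$ has \emph{positive reach}, so its support measures coincide with curvature measures, and Z\"ahle's Theorem~4.1 in \cite{Zahle2011} gives the uniform estimate $C_j^{\var}(\widetilde{A_\eps};\cdot)\le c_j\eps^j$. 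The key structural difference from your attempt is that the paper never has to compare $N_t(A)$ directly with $N(C)$; by passing to the \emph{complement} of the large parallel set, it lands in the positive-reach world where a uniform curvature bound is an off-the-shelf tool. If you wish to salvage your route, the missing piece is precisely a replacement for that tool: a direct uniform bound on $\sH^{d-1}(N_t(A))$ for $t\ge R\diam A$, which you would essentially have to prove from scratch (and which the paper sidesteps).
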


\begin{proposition}
    \label{prop:basic-exp-upper-bd}
  For any nonempty compact set $A\subseteq\R^d$ and any $i\in I_d$,
  \begin{align*}
      \ubex_i(A)\leq \udimout_M A \quad \text{ and }\quad \lbex_i(A)\leq \ldimout_M A.
  \end{align*}
\end{proposition}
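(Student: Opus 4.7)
The plan is to combine the three tools already prepared: Proposition~\ref{prop:outer-box-vs-Mink} (relating outer box and outer Minkowski dimension), Lemma~\ref{lem:basic-est2-new} (the covering estimate for $\fbv_i$), and Proposition~\ref{prop:basic-est} (the small-diameter bound). The overall strategy is the standard one: control $\fbv_i(A;t)$ by a box-counting quantity at a slightly smaller scale, then read off the desired exponent inequality from the growth of that quantity.

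First I would fix $R>\sqrt{2}$ and, for each small $t>0$, set $s:=t/R$. Since $s<t$, the inclusion $M_t(A)\subseteq M_s(A)$ holds, so any minimal cover of $M_s(A)$ by $J=\Theta_s(A)$ open sets $B_1,\ldots,B_J$ of diameter at most $s$ is also a cover of $M_t(A)$. Applying Lemma~\ref{lem:basic-est2-new} yields
$$
\fbv_i(A;t)\leq\sum_{j=1}^{J}\fbv_i(A_j;t),\qquad A_j:=\overline{B_j\cap A},
$$
and since $\diam(A_j)\leq s=t/R$, each summand is controlled by Proposition~\ref{prop:basic-est}, giving $\fbv_i(A_j;t)\leq c_i t^i$. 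Combining these leads to the key estimate
$$
\fbv_i(A;t)\leq c_i\, t^i\, \Theta_{t/R}(A),
$$
which converts a covering bound at scale $t/R$ directly into a bound on the $i$-th basic function.

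From here the derivation is just bookkeeping with the defining $\limsup$ and $\liminf$. For the upper bound, fix any $q>\udimout_M A=\udimout_B A$ (the equality being Proposition~\ref{prop:outer-box-vs-Mink}) and any $q'\in(\udimout_B A,q)$. Then $\Theta_{t/R}(A)\leq (t/R)^{-q'}$ for all sufficiently small $t$, so
$$
t^{q-i}\fbv_i(A;t)\leq c_i R^{q'}\, t^{q-q'}\longrightarrow 0,
$$
whence $\uBC_i^{\var,q}(A)=0$ and $\ubex_i(A)\leq q$; letting $q\searrow\udimout_M A$ settles this half. For the lower part, fix any $q>\ldimout_M A\geq\ldimout_B A$ and any $q'\in(\ldimout_B A,q)$; the $\liminf$ in the definition of $\ldimout_B A$ supplies a null sequence $(s_n)$ with $\Theta_{s_n}(A)\leq s_n^{-q'}$. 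Evaluating the key estimate at $t_n:=Rs_n$ gives $t_n^{q-i}\fbv_i(A;t_n)\to 0$, so $\lBC_i^{\var,q}(A)=0$ and $\lbex_i(A)\leq q$, and letting $q\searrow\ldimout_M A$ concludes.

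The only subtle point, and the reason for working at scale $s=t/R$ rather than at scale $t$, is that Proposition~\ref{prop:basic-est} demands the strict inequality $R>\sqrt{2}$ between $t$ and the diameters $\diam(A_j)$; this forces the covering to be taken at the slightly finer scale $t/R$ and introduces only the harmless multiplicative constants $R^{q'}$ and $R^q$ above, which do not affect the limits as $t\to 0^+$. Beyond this minor bookkeeping, and granting Proposition~\ref{prop:basic-est} (whose proof is deferred to Section~\ref{sec:steiner}), I do not expect any serious obstacle.
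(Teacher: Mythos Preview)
Your proposal is correct and follows essentially the same route as the paper: you use the same three ingredients (Proposition~\ref{prop:outer-box-vs-Mink}, Lemma~\ref{lem:basic-est2-new}, Proposition~\ref{prop:basic-est}) in the same order to obtain the identical key estimate $\fbv_i(A;t)\leq c_i\,t^i\,\Theta_{t/R}(A)$, and then extract the exponent bounds in the same way. The only cosmetic differences are notation ($s$ versus $r$ for $t/R$, and $q,q'$ versus $D+\gamma$).
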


\begin{proof}
    Fix $R>\sqrt{2}$ and for $t>0$ let $r=t/R$. Let $J=\Theta_r(A)$ be the minimal number of open sets of diameter at most $r$ needed to cover $M_r(A)$.  Let $\{B_j\}_{j\in J}$ be such a minimal $r$-covering and set
 $A_j:=\ov{A\cap B_j}$.  Then $\{B_j\}_{j\in J}$ is also a covering of the subset $M_t(A)$ of $M_r(A)$. By Lemma \ref{lem:basic-est2-new} and Proposition~\ref{prop:basic-est} (which is applicable since $t=Rr\geq R\diam(A_j)$ for each $j$),  we infer that
    \begin{equation}\label{eq:pom-est}
        \fbv_i(A;t)\leq\sum_{j\in J}\fbv_i(A_j;t)\leq\sum_{j\in J} c_it^i=c_it^i\Theta_r(A)=c_it^i\Theta_{t/R}(A).
    \end{equation}
    Let $D:=\udimout_M A$ and fix some $\gamma>0$.
    Then the definition of the upper outer box dimension and Proposition~\ref{prop:outer-box-vs-Mink} (according to which $\udimout_B A\leq \udimout_M A$) implies that $t^{D+\gamma}\Theta_t(A)\to 0$ as $t\to 0^+$. We infer from \eqref{eq:pom-est} that 
    \begin{equation}
    \begin{aligned}
        t^{({D}+\gamma)-i}\fbv_i(A;t)&\leq t^{({D}+\gamma)-i} c_i t^i \Theta_{t/R}(A)=c_i t^{D+\gamma} \Theta_{t/R}(A)\\
        &= c_i R^{{D}+\gamma} (t/R)^{{D}+\gamma}\Theta_{t/R}(A)\to 0,
    \end{aligned}
    \end{equation}
    as $t\to 0^+$.
    This implies $\ubex_i(A)\leq {D}+\gamma$. Since this holds for any $\gamma >0$, we conclude that  $\ubex_i(A)\leq {D}$, which proves the first inequality asserted in Proposition~\ref{prop:basic-exp-upper-bd}.

    For the second one let $D:=\ldimout_M A$ and fix again some $\gamma>0$. Now the definition of the lower outer box dimension and Proposition~\ref{prop:outer-box-vs-Mink} (according to which $\ldimout_B A\leq \ldimout_M A$) imply the existence of a decreasing null sequence $(t_n)_{n\in\N}$ such that  $(t_n/R)^{D+\gamma}\Theta_{t_n/R}(A)\to 0$ as $n\to\infty$. We infer from \eqref{eq:pom-est} that
    \begin{align*}
            t_n^{({D}+\gamma)-i}\fbv_i(A;t_n)&\leq t_n^{({D}+\gamma)} c_i \Theta_{t_n/R}(A)
        = c_i R^{{D}+\gamma} (t_n/R)^{{D}+\gamma}\Theta_{t_n/R}(A)\to 0,
    \end{align*}
     as $n\to\infty$,
    which implies $\lbex_i(A)\leq {D}+\gamma$. Noting that $\gamma >0$ was arbitrary, we conclude that  $\lbex_i(A)\leq {D}$. This shows the second inequality asserted in Proposition~\ref{prop:basic-exp-upper-bd} and competes the proof.
\end{proof}

\begin{proof}[{\bf Proof of Theorem~\ref{thm:basic-exp}:}]
For the dichotomy of (a) and (b) see Lemma~\ref{lem:lower-bound}. The assertion $\mu_0(A;\cdot)\not\equiv 0$ is a direct consequence of $\mu_0(A;B)>0$ in Proposition~\ref{prop:pos-curv}. 
 Finally, the relations in \eqref{eq:dim_max_m} follow by combining Propositions~\ref{prop:mink-basic-ineq} and \ref{prop:basic-exp-upper-bd}.
\end{proof}

Another important property is the following invariance of the basic scaling exponents and the corresponding contents with respect to similarity mappings, which is a consequence of the scaling properties and motion covariance of support measures, cf.~\eqref{eq:motion-cov} and \eqref{eq:homogen}.

\begin{proposition}[Scaling property for basic exponents and contents]\label{prop:scale-basic}
	Let $A$ be a compact subset of $\R^d$ and $r>0$.
	Let $rA:=\{rx:x\in A\}$.
	Then, for each $i\in I_d$, 
	\begin{equation}\label{eq:basic-scaling-inv}
		\ov{\frk{m}}_{i}(rA)=\ov{\frk{m}}_{i}(A) \quad\mathrm{and}\quad \un{\frk{m}}_{i}(rA)=\un{\frk{m}}_{i}(A).
	\end{equation}
		Furthermore, for any $q\in\R$,
	\begin{equation}
		\uBC_i^q(rA)=r^q\, \uBC_i^q(A)  \quad\mathrm{and}\quad \lBC_i^q(rA)=r^q \lBC_i^q(A).
	\end{equation}
	Moreover, the same relations hold for the total variation counterparts, that is,
	\begin{equation}
		\uBC_i^{\var,q}(rA)=r^q\,\uBC_i^{\var,q}(A)  \quad\mathrm{and}\quad \lBC_i^{\var,q}(rA)=r^q\lBC_i^{\var,q}(A).
	\end{equation}
\end{proposition}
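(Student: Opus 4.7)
The plan is to reduce the three assertions to a single functional equation
\begin{equation*}
\fb_i(rA;t)=r^i\,\fb_i(A;t/r), \qquad \fbv_i(rA;t)=r^i\,\fbv_i(A;t/r),
\end{equation*}
valid for every $t>0$, from which the scaling of contents and the invariance of exponents follow by a routine change of variables in the $\limsup$/$\liminf$.

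First I would record how the local reach behaves under dilation. Since the nearest-point map commutes with uniform scaling, $\pi_{rA}(ry)=r\,\pi_A(y)$ for $y\in\Unp(A)$, hence $(x,u)\mapsto(rx,u)$ is a bijection $N(A)\to N(rA)$, and \eqref{eq:reach-def} immediately gives $\dl(rA,rx,u)=r\,\dl(A,x,u)$. Setting $B_s:=\{(x,u)\in N(A):s<\dl(A,x,u)\}$, the image of $B_{t/r}$ under this bijection is $rB_{t/r}=\{(x',u')\in N(rA):t<\dl(rA,x',u')\}$ in the notation of \eqref{eq:homogen}. Applying the homogeneity \eqref{eq:homogen} of $\mu_i$ then yields
\begin{equation*}
\fb_i(rA;t)=\mu_i(rA;rB_{t/r})=r^i\,\mu_i(A;B_{t/r})=r^i\,\fb_i(A;t/r),
\end{equation*}
and the same argument with $|\mu_i|$ in place of $\mu_i$ (which is legitimate since homogeneity holds for the positive and negative parts separately, by Jordan uniqueness combined with \eqref{eq:homogen}) gives the analogous identity for $\fbv_i$, and likewise for $\fb_i^{\pm}$ in view of Remark~\ref{rem:pm-basic}.

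Finally, substituting $s=t/r$ in the definition \eqref{eq:upper-basic} of $\uBC_i^q(rA)$ one computes
\begin{equation*}
\uBC_i^q(rA)=\limsup_{t\to 0^+}t^{q-i}\fb_i(rA;t)=\limsup_{s\to 0^+}(rs)^{q-i}r^i\fb_i(A;s)=r^q\,\uBC_i^q(A),
\end{equation*}
and the same substitution works verbatim for the lower content and for the total variation versions. The invariance of the basic scaling exponents is then immediate from \eqref{eq:reach-exp}, since multiplication by the strictly positive factor $r^q$ does not affect whether $\uBC_i^{\var,q}$ vanishes or is infinite. There is no genuine obstacle here; the only point to be careful about is that in both the homogeneity rule \eqref{eq:homogen} and the reach-scaling identity the unit-vector component is not rescaled, and these two conventions match, so the change of variables goes through cleanly.
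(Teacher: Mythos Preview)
Your proof is correct and follows essentially the same approach as the paper's: both establish the key functional equation $\fb_i(rA;t)=r^i\fb_i(A;t/r)$ via the scaling of the reach function and the homogeneity \eqref{eq:homogen} of the support measures, then obtain the content scaling by the substitution $s=t/r$ and read off the exponent invariance. Your write-up is in fact a bit more explicit about the bijection $N(A)\to N(rA)$ and about why the total-variation version follows (via Jordan decomposition), but the argument is the same.
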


\begin{proof}
	Observe that, after performing the change of variables $y:=x/r$,
	\begin{equation}
		\begin{aligned}		\fb_i(rA,t)&=\int_{N(rA)}\1{\{t<\dl(rA,x,u)\}}\mu_i(rA;\d(x,u))\\
		&=\int_{N(rA)}\1{\{t<\dl(rA,ry,u)\}}\mu_i(rA;\d(ry,u)).
	\end{aligned}
	\end{equation}	
	Since $\dl(rA,ry,u)=r\dl(A,y,u)$ and $\mu_i(rA;\d(ry,u))=r^i \mu_i(A;\d(y,u))$, 
    by the scaling property \eqref{eq:homogen} of the support measures, we infer that
	\begin{equation} \label{eq:scaling-beta}
		\fb_i(rA,t)=r^i\int_{N(A)}\1{\{tr^{-1}<\dl(A,y,u)\}}\mu_i(A;\d(y,u))
		=r^i\fb_i(A,tr^{-1}).
	\end{equation}
	Thus, we obtain for the (upper) $i$-th basic content of $rA$, 
	\begin{equation}
		\ov{\Cal M}_i^q(rA)=\limsup_{t\to 0^+}t^{q-i}\fb_{i}(rA,t)=\limsup_{t\to 0^+}(tr^{-1})^{q-i}r^q\fb_{i}(A,tr^{-1})=r^q\ov{\Cal M}_i^q(A).
	\end{equation}
	The corresponding relations for the lower basic  contents and for the total variation counterparts are derived analogously.
	From this one immediately concludes the equality of the upper basic scaling exponents of $rA$ and $A$ as asserted in \eqref{eq:basic-scaling-inv} (and similarly for the lower basic exponents.
\end{proof}

As a last step we state and prove the announced improvement of the integrability result for support measures based on the basic exponents.

\begin{lemma}
	\label{lem:integrability-better}
	Let $A$ be a compact subset of $\R^d$, $i\in I_d$, and $t>0$.  
	Then the following integral is finite for any $\gamma>\ubex_i(A)$:
	\begin{equation}
	\label{eq:integrability-better}
		\int_{N(A)}\min\{t,\dl(A,x,u)\}^{\gamma-i}|\mu_i|(A;\d (x,u))<\iy.
	\end{equation}
\end{lemma}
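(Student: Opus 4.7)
My plan is to convert the integral over $N(A)$ into a one-dimensional integral against the basic function $\fbv_i(A;\cdot)$ via a layer-cake/Fubini step, after which the asymptotic information supplied by $\ubex_i(A)$ yields integrability immediately.

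If $|\mu_i|(A;\cdot)\equiv 0$, the assertion is trivial, so I would assume otherwise. Then Theorem~\ref{thm:basic-exp} gives $\ubex_i(A)\geq i$, so any $\gamma>\ubex_i(A)$ satisfies $\alpha:=\gamma-i>0$. This positivity lets me apply the layer-cake identity
\[
\min\{t,\dl(A,x,u)\}^{\alpha}=\alpha\int_0^t r^{\alpha-1}\1\{r<\dl(A,x,u)\}\,\d r,
\]
valid pointwise on $N(A)$. Together with Fubini (legitimate since the integrand is nonnegative), this rewrites the integral in \eqref{eq:integrability-better} as
\[
(\gamma-i)\int_0^t r^{\gamma-i-1}\,\fbv_i(A;r)\,\d r.
\]

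To estimate this one-dimensional integral I would pick any $q$ with $\ubex_i(A)<q<\gamma$. By Definition~\ref{def:reach-measure} we have $\uBC_i^{\var,q}(A)=0$, so there exist $r_0\in(0,t]$ and $C>0$ with $\fbv_i(A;r)\leq C r^{i-q}$ for every $r\in(0,r_0]$, while on $[r_0,t]$ the monotonicity supplied by Proposition~\ref{def:basic} gives the uniform bound $\fbv_i(A;r)\leq \fbv_i(A;r_0)<\infty$. Splitting at $r_0$ thus bounds the integral above by
\[
C\int_0^{r_0}r^{\gamma-q-1}\,\d r+\fbv_i(A;r_0)\int_{r_0}^t r^{\gamma-i-1}\,\d r,
\]
which is finite: the first summand because $\gamma-q>0$ makes $r^{\gamma-q-1}$ integrable near zero, the second because its integrand is bounded on a compact subinterval of $(0,\infty)$.

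The argument has essentially no serious obstacle; the only point deserving care is separating the trivial case $\mu_i\equiv 0$ at the outset, which is what guarantees $\alpha>0$ and hence the applicability of the layer-cake identity. Everything downstream is elementary, and the improvement over \cite[(2.2)]{HugLasWeil} comes precisely from replacing the crude bound $\fbv_i(A;r)\leq\fbv_i(A;r_0)$ near zero (which only yields finiteness for exponents $>d$) by the sharper scaling bound $\fbv_i(A;r)\leq C r^{i-q}$ for any $q>\ubex_i(A)$.
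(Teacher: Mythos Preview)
Your proof is correct and a bit slicker than the paper's. Both arguments hinge on the same estimate $\fbv_i(A;r)\leq C r^{i-q}$ for $r$ small and $q\in(\ubex_i,\gamma)$, but the paper obtains \eqref{eq:integrability-better} by a dyadic decomposition of $N(A)$ according to the level sets $\{2^{-j-1}t<\dl\leq 2^{-j}t\}$, bounding the integrand on each shell and summing the resulting geometric series. Your layer-cake identity plus Tonelli replaces this discrete decomposition by a direct passage to the one-dimensional integral $(\gamma-i)\int_0^t r^{\gamma-i-1}\fbv_i(A;r)\,\d r$, after which the same scaling bound handles the singularity at $0$ without any partition. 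The two are morally equivalent (dyadic shells are a discretised layer cake), but your continuous version is shorter and makes the role of $\fbv_i$ more transparent. One small quibble: your final remark that the crude constant bound ``only yields finiteness for exponents $>d$'' is not quite the right diagnosis---the point is rather that $\fbv_i$ need not be bounded near $0$ at all, so no constant bound is available there; the sharper power-law bound is what makes the argument work.
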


\begin{proof}
	Let $q\in(\ubex_i,\gamma)$. Then we have $ \uBC_i^{\var,q}(A)=0$, implying that $\tau^{q-i}\fbv_i(\tau)$ is bounded from above by some positive constant $C_t$ for all $\tau\in(0,t]$, by the monotonicity of $\fbv_i$ established in Proposition \ref{lem:parallel-supp-finite}, i.e.,
	\begin{equation}
		\label{eq:estimate}
		\fbv_{i}(\tau)\leq C_t\tau^{i-q}\quad\textrm{for all}\quad \tau\in(0,t].
	\end{equation}
	We subdivide the normal bundle $N(A)$ of $A$ in a dyadic fashion according to the local reach at its points, i.e., we let
	\begin{equation}
		B_j:=\left\{(x,u)\in N(A):2^{-j-1}t<\dl(A,x,u)\leq 2^{-j}t\right\}, \quad\textrm{for all}\quad j\in\N_0,
	\end{equation}
	so that we have the following (pairwise) disjoint decomposition of the normal bundle:
	\begin{equation}
		N(A)=\bigcup_{j=0}^{\infty} B_j.
	\end{equation}
	Consequently, the integral in \eqref{eq:integrability-better}, which we denote here by $I(\gamma)$, can be decomposed as follows:
	\begin{align}
	\label{eq:integral-sum-est}
		I(\gamma)=&\sum_{j=0}^{\iy}\int_{B_j}\min\{t,\dl(A,x,u)\}^{\gamma-i}|\mu_i|(A;\d (x,u))\\
		&+\int_{\left\{(x,u)\in N(A):\dl(A,x,u)>t\right\}}t^{\gamma-i}|\mu_i|(A;\d (x,u)).\nn
	\end{align}
	Observe that the last integral above is exactly equal to $t^{\gamma-i}\fbv_i(t)$ and hence finite by Proposition \ref{lem:parallel-supp-finite}.
	
	Let us further proceed by estimating the sum of integrals appearing in \eqref{eq:integral-sum-est}. Observe that on the set $B_j$ the integrand is bounded from above by $(2^{-j}t)^{\gamma-i}$, since the exponent $\gamma-i$ is positive due to the relation $\gamma>\ubex_i\geq i$. Hence,
	\begin{align*}
		I(\gamma)&\leq\sum_{j=0}^{\iy}2^{-j(\gamma-i)}t^{\gamma-i}\int_{B_j}|\mu_i|(A;\d (x,u))+t^{\gamma-i}\fbv_i(t)\\
		&\leq\sum_{j=0}^{\iy}2^{-j(\gamma-i)}t^{\gamma-i}\int_{\left\{(x,u)\in N(A):\dl(A,x,u)>2^{-j-1}t\right\}}|\mu_i|(A;\d (x,u))+t^{\gamma-i}\fbv_i(t)\nn\\
		&\leq C_t\cdot\sum_{j=0}^{\iy}2^{-j(\gamma-i)}t^{\gamma-i}(2^{-j-1}t)^{i-q}+t^{\gamma-i}\fbv_i(t)\nn\\
		&= 2^{q-i}t^{q-\gamma}C_t\sum_{j=0}^{\infty}2^{-j(\gamma-q)}+t^{\gamma-i}\fbv_i(t)\nn\\
		&=\frac{2^{q-i}t^{q-\gamma}C_t}{1-2^{q-\gamma}}+t^{\gamma-i}\fbv_i(t)<\iy,\nn
	\end{align*}
	where the second inequality is due to the inclusion $B_j\stq\left\{(x,u):\dl(A,x,u)>2^{-j-1}t\right\}$. In the third inequality we used the uniform estimate \eqref{eq:estimate},
    and for the last equality that we have chosen $q<\gamma$ so that the geometric series converges.
	This completes the proof.
\end{proof}

\section{The Steiner formula for support measures}\label{sec:steiner}

 In this section we recall a relation between the support measures of a compact set $A$ and its parallel sets $A_\eps$, $\eps>0$, known as the \emph{Steiner formula for support measures}. We observe that it may be reformulated in terms of the basic functions of $A$ and use it to establish some bounds on the growth behaviour of the support measures $\mu_i(A_\eps,\cdot)$ for large $\eps$. These bounds will be useful for the results of the next section. An inversion of the formula will also enable us to provide the missing proof of Proposition~\ref{prop:basic-est}.

Let $A\subseteq\R^d$ be a nonempty compact set and $i\in I_d$.
According to \cite[Corollary 4.4]{HugLasWeil}, the support measures of the $\e$-parallel sets of $A$ satisfy the following Steiner-type formula in terms of the support measures of the set $A$. For any $\e>0$, 
 \begin{equation}\label{eq:mu-parallel-local}
     \mu_i(A_{\e};\hat{B})=\sum_{j=0}^ic_{i,j}\,\e^{i-j}\,\int_{N(A)}\hspace{-3mm}\1{\{\e<\dl(A,x,u)\}}\1{\{T_{\eps}(x,u)\in \hat{B}\}}\,\mu_j(A;\d (x,u)),
 \end{equation}
 for any $\sH^{d-1}$-measurable set $\hat{B}\subset N(A_\e)$. Here $T_{\e}\colon N(A)\to \R^d\times\eS^{d-1}$ is the mapping defined by
 \begin{align} \label{eq:T_eps-def}
     T_{\e}(x,u):=(x+\e u,u)
 \end{align}
 and the constants $c_{i,j}$ are given by
\begin{equation}\label{eq:cij}
	c_{i,j}:=\binom{d-j}{d-i}\frac{\kappa_{d-j}}{\kappa_{d-i}}
\end{equation}
for $i,j\in\{0,\ldots,d-1\}$. Note that the $c_{i,j}$ do not depend on $A$ and that $c_{i,j}=0$ if $j>i$. Note also that $T_\eps(x,u)\in N(A_\eps)$ if and only if $(x,u)\in N(A)$ and $\eps<\dl(A,x,u)$.
In addition, by \cite[Corollary 4.4]{HugLasWeil}, the total variation of $\mu_i(A_{\e};\cdot)$ is finite.

 Choosing $\hat B=N(A_\eps)$ in \eqref{eq:mu-parallel-local}, we obtain a formula for the total mass $\mu_i(A_\eps)$ of the $i$-th support measure of $A_\e$. Since $T_{\eps}(x,u)\in N(A_\eps)$ whenever $\eps<\dl(A,x,u)$, it is easily seen that in this case the integrals on the right hand side can be expressed in terms of the basic functions of $A$. Hence, for any $\eps>0$,
\begin{equation}\label{eq:mu-parallel}
	\mu_i(A_{\eps}):=\mu_i(A_{\e};\R^d\times\eS^{d-1})=\sum_{j=0}^ic_{i,j}\,\e^{i-j}\,\fb_j(\e).
\end{equation}

Our next aim is to establish a useful estimate on the total mass $|\mu_i|(A_\e)$ of the total variation of the support measure for parallel sets of large parallel radii $\e$.
This estimate will also be crucial for proving Proposition~\ref{prop:basic-est} from Section \ref{sec:4}.

\begin{proposition}
    \label{lem:support-est}
    Let $R>\sqrt{2}$ and let $i\in I_d$.
    There exists a constant $c_i>0$ such that
    \begin{equation}
        \label{eq:basic-est-mui}
        |\mu_i|(A_{\eps})\leq c_i\eps^i
    \end{equation}
    for all compact sets $A\subseteq\R^d$ and all $\eps$ such that  $\eps\geq R\diam(A)$.
\end{proposition}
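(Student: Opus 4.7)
The plan is to derive the bound directly from the Steiner-type formula \eqref{eq:mu-parallel-local}. Since the map $T_\eps\colon N(A)\to\R^d\times\eS^{d-1}$ is obviously injective (both coordinates can be recovered from the image), the identity \eqref{eq:mu-parallel-local} realizes $\mu_i(A_\eps;\cdot)$ as a finite signed combination of pushforwards $(T_\eps)_*[\mu_j(A;\cdot\cap N_\eps(A))]$ for $j=0,\ldots,i$, where $N_\eps(A):=\{(y,u)\in N(A):\dl(A,y,u)>\eps\}$. Passing to total variations yields
\begin{equation*}
  |\mu_i|(A_\eps)\leq\sum_{j=0}^{i} c_{i,j}\,\eps^{i-j}\,\fbv_j(A;\eps),
\end{equation*}
so the statement reduces to bounding $\fbv_j(A;\eps)$ by a constant multiple of $\eps^j$ for every $j\leq i$ and every $\eps\geq R\diam(A)$.

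To establish the latter, I would exploit that the condition $\eps\geq R\diam(A)$ with $R>\sqrt{2}$ severely constrains $N_\eps(A)$. Indeed, a short computation from the definition of $\dl$ gives $\dl(A,y,u)=\inf\{|z-y|^2/(2\langle z-y,u\rangle):z\in A,\ \langle z-y,u\rangle>0\}$ (with $\inf\emptyset=\iy$). Thus, for $(y,u)\in N_\eps(A)$, the constraint $|z-y|\leq\diam(A)$ forces $\cos\theta_{z,u}<\diam(A)/(2\eps)\leq 1/(2R)<1/(2\sqrt{2})$ for every $z\in A$ lying on the $u$-side of $y$. Hence, up to a narrow cone, $A$ sits in the closed half-space opposite to $u$ at $y$, so $N_\eps(A)$ is close to the "exposed" subset $\{(y,u)\in N(A):\dl(A,y,u)=\iy\}$, which is contained in the normal bundle of the convex hull $K:=\mathrm{conv}(A)$.

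On the exposed subset, the local character \eqref{eq:loc-defined} of the support measures implies $\mu_j(A;\cdot)=C_j(K;\cdot)$, and since $K$ is convex with $\diam(K)=\diam(A)\leq\eps/R$, Jung's inequality $r(K)\leq\diam(A)/\sqrt 2<\eps/2$ combined with Kneser-type bounds for convex bodies give $C_j(K)\leq \tilde c_j\eps^j/R^j$. The hardest part will be controlling the "transition" region $\{(y,u)\in N(A):\eps<\dl(A,y,u)<\iy\}$, which need not be empty when $R$ is close to $\sqrt 2$: using the crude pointwise bound $|H_{d-1-j}|\leq\binom{d-1}{j}$ from \eqref{eq:Hj} together with an $\sH^{d-1}$-estimate on the corresponding portion of $N(A)$, derived from the narrow-cone condition just obtained, should show that its contribution to $\fbv_j$ is also of order $\eps^j$. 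Inserting the resulting bound $\fbv_j(A;\eps)\leq C_j\eps^j$ into the display above then yields the desired estimate $|\mu_i|(A_\eps)\leq c_i\eps^i$, and the threshold $R>\sqrt 2$ emerges naturally from Jung's inequality.
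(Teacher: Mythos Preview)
Your first reduction---bounding $|\mu_i|(A_\eps)$ by $\sum_{j\leq i}c_{i,j}\eps^{i-j}\fbv_j(A;\eps)$ via the injectivity of $T_\eps$---is correct (this is essentially Lemma~\ref{lem:m-decomp-up}). The difficulty is that you have thereby reduced Proposition~\ref{lem:support-est} to the statement $\fbv_j(A;\eps)\leq C_j\eps^j$ for $\eps\geq R\diam(A)$, which is precisely Proposition~\ref{prop:basic-est}. In the paper the logic runs the \emph{other way}: Proposition~\ref{lem:support-est} is proved first (by an independent argument) and Proposition~\ref{prop:basic-est} is then deduced from it via the inversion formula~\eqref{eq:bound-basic-up}. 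So you are attempting to establish the harder estimate directly, and the sketch you give for this step has real gaps.

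The most concrete one is the appeal to~\eqref{eq:loc-defined} to identify $\mu_j(A;\cdot)$ with $C_j(K;\cdot)$ on the exposed set. The locality property requires $A\cap U=K\cap U$ for some \emph{open} $U$, which almost never holds for $A$ and its convex hull (think of $A$ a finite set of points). What one can salvage is much weaker: on the exposed part of $N(A)$ the Gauss map agrees with that of $K$, which forces $H_{d-1}(A,\cdot)=H_{d-1}(K,\cdot)$ there (cf.\ the proof of Proposition~\ref{prop:pos-curv}), but this gives no information about $H_{d-1-j}$ for $j>0$, hence none about $\mu_j(A;\cdot)$. Secondly, on the transition region the crude bound $|H_{d-1-j}|\leq\binom{d-1}{j}$ yields only $|\mu_j|(A;\cdot)\leq C\,\sH^{d-1}(\cdot)$ with a constant \emph{independent of $j$}; this cannot produce the required $\eps^j$ scaling without a separate quantitative bound on $\sH^{d-1}$ of that region, and the narrow-cone observation you make does not obviously supply one (the Jacobian of $T_\eps$ on $N_\eps(A)$ need not be bounded below).

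The paper avoids all of this by a reflection trick: one shows $|\mu_i|(A_\eps)\leq |\mu_i|(\widetilde{A_\eps})$ for the closed complement $\widetilde{A_\eps}$ (Lemma~\ref{lem:normal-reflection}), observes that $\widetilde{A_\eps}$ has positive reach when $\eps>\sqrt{2}\,\diam(A)$, and then invokes the uniform curvature-variation bound for positive-reach sets from \cite[Theorem~4.1]{Zahle2011}. This sidesteps any direct analysis of $N_\eps(A)$ or comparison with the convex hull.
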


As a first step towards the proof of the proposition, we establish some relation between the total mass of the total variation $|\mu_i|(A_\e)$ of a compact set $A\subseteq \R^d$ and its closed complement $\widetilde{A}:=\overline{\R^d\setminus A}$.

\begin{lemma}\label{lem:normal-reflection}
Let $A\subseteq\R^d$ be compact and nonempty and $i\in I_d$ arbitrary.
Then,
\begin{equation}\label{eq:reflection-support}
    |\mu_i|(A_{\eps})\leq|\mu_i|(\widetilde{A_{\eps}}),
\end{equation}
for all $\eps>\diam(A)$.
\end{lemma}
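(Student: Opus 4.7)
The plan is to construct an injection $N(A_\eps) \hookrightarrow N(\widetilde{A_\eps})$ via the reflection $r(x,u) := (x,-u)$, and then use the sign-flip of the generalized principal curvatures to compare the integral representations \eqref{eq:support-formula} of the two total-variation measures. Since $r$ is an isometry of $\R^d \times \eS^{d-1}$ and thus preserves $\sH^{d-1}$, the desired inequality will follow from an inclusion of normal bundles combined with a pointwise comparison of the densities $|H_{d-1-i}|$.

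First I would show that $r(N(A_\eps)) \subseteq N(\widetilde{A_\eps})$. Given $(x,u) \in N(A_\eps)$, there is $y = x + tu$ with $\pi_{A_\eps}(y) = x$ unique. Using compactness of $A$ and the fact that $B(a',\eps) \subseteq A_\eps$ for every $a' \in A$, we get $\dist(y, A) \geq t + \eps$ with equality attained at any $a \in A$ with $|x - a| = \eps$. The uniqueness of $\pi_{A_\eps}(y) = x$ then forces such $a$ to be unique and collinear with $y$ and $x$, pinning down $a = x - \eps u$ and $\pi_A(x) = \{x - \eps u\}$. The closed ball $B := \overline{B(a,\eps)}$ is then contained in $A_\eps$, whence $\widetilde{A_\eps} \subseteq \R^d \setminus \mathrm{int}(B)$. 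For $s \in (0,\eps)$, the point $z_s := x - su$ lies in $\mathrm{int}(B)$ on the radius from $a$ to $x$, and its unique nearest point on $\partial B$ is $x$, at distance $s$. Combining with the set inclusion $\widetilde{A_\eps} \subseteq \R^d \setminus \mathrm{int}(B)$ yields $\pi_{\widetilde{A_\eps}}(z_s) = \{x\}$ with $(z_s - x)/s = -u$, so $(x,-u) \in N(\widetilde{A_\eps})$.

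Next, the generalized principal curvatures satisfy $k_l(\widetilde{A_\eps}, x, -u) = -k_l(A_\eps, x, u)$ for $l = 1,\ldots,d-1$, a standard consequence of reversing the normal at a common boundary point. From the definition \eqref{eq:Hj}, the denominator is invariant under flipping the signs of all $k_l$'s while the numerator is multiplied by $(-1)^{d-1-i}$, so $|H_{d-1-i}(\widetilde{A_\eps}, x, -u)| = |H_{d-1-i}(A_\eps, x, u)|$ for $\sH^{d-1}$-a.e.\ $(x,u) \in N(A_\eps)$. Combining this with \eqref{eq:support-formula} and the isometric change of variable under $r$,
\begin{equation*}
  \omega_{d-i} |\mu_i|(A_\eps) = \int_{N(A_\eps)} |H_{d-1-i}(A_\eps,x,u)| \, \sH^{d-1}(\d(x,u)) = \int_{r(N(A_\eps))} |H_{d-1-i}(\widetilde{A_\eps},x,v)| \, \sH^{d-1}(\d(x,v)),
\end{equation*}
which is bounded above by $\omega_{d-i}|\mu_i|(\widetilde{A_\eps})$ using the inclusion $r(N(A_\eps)) \subseteq N(\widetilde{A_\eps})$ and the nonnegativity of the integrand; dividing by $\omega_{d-i}$ yields \eqref{eq:reflection-support}.

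The main obstacle is the normal-bundle inclusion, and in particular the clean identification $a = x - \eps u$ and the uniqueness of $\pi_{\widetilde{A_\eps}}(z_s) = \{x\}$; the inscribed-ball trick renders both essentially automatic once one exploits that $\pi_{A_\eps}(y) = x$ is a unique minimizer. Interestingly, the argument does not appear to require the hypothesis $\eps > \diam(A)$; I suspect it is imposed with the downstream Proposition~\ref{lem:support-est} in mind, where one ultimately wants the right-hand side of \eqref{eq:reflection-support} to scale cleanly like $\eps^i$.
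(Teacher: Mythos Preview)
Your proof is correct and tracks the paper's strategy closely: establish $r(N(A_\eps)) \subseteq N(\widetilde{A_\eps})$ via the reflection $r(x,u)=(x,-u)$, invoke the sign-flip of generalized principal curvatures (cited in the paper as \cite[Prop.~5.1]{HugLasWeil}), and compare the integral representations \eqref{eq:support-formula} of the total variations. The one substantive difference is in securing uniqueness of the projection $\pi_{\widetilde{A_\eps}}(z_s)=x$. The paper uses the hypothesis $\eps>\diam(A)$ to conclude (via \cite[Lemma~2.2]{HugLasWeil}) that $\widetilde{A_\eps}$ has positive reach, and then places the test point within this reach so that uniqueness is automatic. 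Your inscribed-ball argument---$z_s \in \mathrm{int}\,\overline{B(a,\eps)}\subseteq \mathrm{int}(A_\eps)$ has $x$ as its \emph{unique} nearest point in $\R^d\setminus\mathrm{int}\,\overline{B(a,\eps)}\supseteq\widetilde{A_\eps}$, and $x\in\widetilde{A_\eps}$---bypasses positive reach entirely and is a bit more direct. Your closing suspicion is therefore justified: the inequality~\eqref{eq:reflection-support} in fact holds for all $\eps>0$, and the hypothesis $\eps>\diam(A)$ enters only through the paper's particular proof and, more essentially, downstream in Proposition~\ref{lem:support-est}, where positive reach of $\widetilde{A_\eps}$ is needed to identify its support measures with curvature measures.
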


\begin{proof}
    Let $\eps>\diam(A)$. 
    First, we claim that if $(x,u)\in N(A_{\eps})$, then the reflection $T(x,u):=(x,-u)$ is an element of $ N(\widetilde{A_{\eps}})$, i.e.,
    \begin{align}
        \label{eq:reflection-prop}
        N(A_{\eps})\subseteq T(N(\widetilde{A_{\eps}})).
    \end{align}
    To establish this, let $(x,u)\in N(A_{\eps})$ and observe that, by \cite[Lem.\ 2.2]{HugLasWeil}, $\widetilde{A_{\eps}}$ is a set with positive reach,  since $\eps>\diam(A)$.
    Fix some $r<\mathrm{reach}(\widetilde{A_{\eps}})$ and let $y:=x-ru$.
    We are going to prove that $\pi_{\widetilde{A_{\eps}}}(y)=x$, which will imply that $(x,-u)\in N(\widetilde{A_{\eps}})$ and thus \eqref{eq:reflection-prop}.
    Recall that $\pi_{\widetilde{A_{\eps}}}(y)=x$ holds if and only if $B(z,r)\cap\widetilde{A_{\eps}}(y)=\{x\}$, where $B(y,r)$ is the closed ball centered at $z$ of radius $r$.

    Observe that  $(x,u)\in N(A_{\eps})$ implies $(x-\pi_A(x))/\dist(x,A)=u$.
    Indeed, let $z$ be such that $\pi_{A_{\eps}}(z)=x$ and assume for a contradiction that the segment $[\pi_A(x),z]$ intersects $\partial A_{\eps}$ in a point $x'\neq x$.
    Since $|\pi_A(x)-x'|=\eps$, the triangle inequality implies
    $|\pi_A(x)-x'|+\eps=|\pi_A(x)-z|<|\pi_A(x)-x|+|x-z|=\eps+|x-z|$, i.e., $|x'-z|<|x-z|$, in contradiction to the fact that $\pi_{A_{\eps}}(z)=x$.

    Hence, the open balls satisfy $B^{\circ}(y,r)\subset B^{\circ}(\pi_A(x),\eps)$ since $x$, $y$ and $\pi_A(x)$ are co-linear.
    Furthermore, since $B^{\circ}(\pi_A(x),\eps)\cap \widetilde{A_{\eps}}=\emptyset$, we infer that $\dist(\widetilde{z},\widetilde{A_{\eps}})=r$. By the definition of $y$, we have $|y-x|=r$ and, since $\pi_{\widetilde{A_{\eps}}}(y)$ is unique, we conclude that it has to be equal to $x$. This proves \eqref{eq:reflection-prop}.

    To continue the proof, fix some $i\in I_d$. Observe that, for all $(x,u)\in N(A_{\eps})$, the following {\em reflection relation} for the generalized principal curvatures holds: $k_j(A_{\eps},x,u)=-k_j(\widetilde{A_{\eps}},x,-u)$ for $j=1,\ldots, d-1$ and $\sH^{d-1}$-a.e.\ $(x,u)\in N(A_{\eps})$, see e.g.\ \cite[Prop.\ 5.1.]{HugLasWeil}. It follows now directly from the definition of $H_{j}$ (see \eqref{eq:Hj})
    that, for all such $(x,u)$,
    \begin{equation}\label{eq:reflection}
        H_{d-i-1}(\widetilde{A_{\eps}},x,-u)=(-1)^{d-i-1}H_{d-i-1}(A_{\eps},x,u).
    \end{equation}

    Hence, one has
    $$
    \begin{aligned}
    \mu_i(A_{\eps})&=\frac{1}{\o_{d-i}}\int_{N(A_{\eps})}H_{d-i-1}(A_{\eps},x,u)\sH^{d-1}(\d (x,u))\\
    &=\frac{(-1)^{d-i-1}}{\o_{d-i}}\int_{N(\widetilde{A_{\eps}})}\1{\{(x,-u)\in N(A_{\eps})\}}H_{d-i-1}(\widetilde{A_{\eps}},x,u)\sH^{d-1}(\d (x,u)),
    \end{aligned}
    $$
    where the second equality is due to the change of variables $T(x,u)=(x,-u)$ (with Jacobian $JT(x,u)=1$) and the reflection formula \eqref{eq:reflection}.
    For the total variation this implies
    $$
    \begin{aligned}
    |\mu_i|(A_{\eps})&=\frac{1}{\o_{d-i}}\int_{N(\widetilde{A_{\eps}})}\1{\{(x,u)\in N(A_{\eps})\}}|H_{d-i-1}(\widetilde{A_{\eps}},x,u)|\sH^{d-1}(\d (x,u))\\
    &\leq \frac{1}{\o_{d-i}}\int_{N(\widetilde{A_{\eps}})}|H_{d-i-1}(\widetilde{A_{\eps}},x,u)|\sH^{d-1}(\d (x,u))=|\mu_i|(\widetilde{A_{\eps}}),
    \end{aligned}
    $$
    as claimed in \eqref{eq:reflection-support}, completing the proof of Lemma~\ref{lem:normal-reflection}.
\end{proof}

\begin{proof}[{\bf Proof of Proposition \ref{lem:support-est}:}]
Fix $i\in I_d$ and let $\varepsilon>R\diam(A)$.  By \cite[Lemma 2.2]{HugLasWeil}, this ensures that the set $\widetilde{A_{\eps}}$ has positive reach and therefore, by \cite[\S 3]{HugLasWeil} and \cite[Theorem 3.3]{HugLast00}, $\mu_i(\widetilde{A_{\eps}})=C_i(\widetilde{A_{\eps}};N(\widetilde{A_{\eps}}))$, where $C_i(\widetilde{A_{\eps}};\cdot)$ is the $i$-th generalized curvature measure of $\widetilde{A_{\eps}}$, cf.\ also p.\,\pageref{page:curv}. 
By \cite[Theorem 4.1]{Zahle2011}, we conclude the existence of a positive constant $c_i$ (depending only on $R$) such that
$$|\mu_i|(\widetilde{A_{\eps}})=C_i^{\var}(\widetilde{A_{\eps}};N(\widetilde{A_{\eps}}))\leq C_i^{\var}(\widetilde{A_{\eps}};\R^d\times\eS^{d-1})\leq c_i\eps^i$$
for all $\eps>R\diam(A)$, where $C_i^{\var}$ denotes the total variation of the generalized curvature measure $C_i$.
Combining this with Lemma~\ref{lem:normal-reflection} shows \eqref{eq:basic-est-mui} and completes the proof of the proposition.
\end{proof}

The following statement is a certain converse to \eqref{eq:mu-parallel}, which is interesting in its own right. It will in particular be used in the proof of Proposition~\ref{prop:basic-est}.

\begin{proposition}\label{prop:invert-rela}
    Let $A\subseteq\R^d$ be nonempty and compact.
    Then, for each $i\in I_d$ and all $\eps>0$,
    \begin{equation}\label{eq:basic_from_support}
	    \fb_i(\eps)=\sum_{j=0}^ib_{i,j}\,\e^{i-j}\,\mu_j(A_{\e}),
    \end{equation}
    where the coefficients $b_{i,j}$ are independent of $\e$ and $A$, and given by
	\begin{equation}\label{eq:b_ij}
		b_{i,j}:=(-1)^{i+j}\det(C_{j,i}),
	\end{equation}
with $C_{j,i}$ being the submatrix of $C:=(c_{i,j})_{i,j=0,\ldots,d-1}$ with deleted $j$-th row and $i$-th column and $c_{i,j}$ as in \eqref{eq:cij}.

Furthermore, for all $\eps>0$,
\begin{equation}\label{eq:bound-basic-up}
\fb^{\var}_i(\eps)\leq 2\sum_{j=0}^i|b_{i,j}| \e^{i-j}\,|\mu_j|(A_{\e}).
\end{equation}
\end{proposition}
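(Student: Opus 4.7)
Both parts rest on inverting a lower triangular linear system whose coefficient matrix is $C:=(c_{i,j})_{i,j=0,\ldots,d-1}$. First I note that $c_{i,j}=0$ for $j>i$ (since $\binom{d-j}{d-i}=0$ in that range) while $c_{i,i}=1$, so $C$ is lower triangular with unit diagonal. Hence $\det C=1$, and by Cramer's rule the $(i,j)$-entry of $C^{-1}$ equals $(-1)^{i+j}\det C_{j,i}=b_{i,j}$, with $b_{i,j}=0$ for $j>i$ and $b_{i,i}=1$.

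For the identity \eqref{eq:basic_from_support}, I would specialize \eqref{eq:mu-parallel-local} to $\hat B=N(A_\eps)$ to recover \eqref{eq:mu-parallel}, and view the latter as the vector equation $(\mu_i(A_\eps))_i=D_\eps(\fb_j(\eps))_j$ with $D_\eps:=(c_{i,j}\eps^{i-j})_{i,j}=E_\eps C E_\eps^{-1}$ for $E_\eps:=\operatorname{diag}(1,\eps,\ldots,\eps^{d-1})$. Its inverse $D_\eps^{-1}=E_\eps C^{-1}E_\eps^{-1}$ has $(i,j)$-entry $b_{i,j}\eps^{i-j}$, and left multiplication of both sides by $D_\eps^{-1}$ yields \eqref{eq:basic_from_support}.

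For the total variation estimate \eqref{eq:bound-basic-up}, the plan is to lift the identity \eqref{eq:mu-parallel-local} to the measure level on the set $M:=\{(x,u)\in N(A):\eps<\dl(A,x,u)\}$. The map $T_\eps$ is a homeomorphism of $\R^d\times\eS^{d-1}$ whose restriction to $M$ lands in $N(A_\eps)$ (since $\pi_{A_\eps}(T_\eps(x,u))=x$ on $M$) and is injective, hence a Borel isomorphism onto $T_\eps(M)$. Pulling $\mu_j(A_\eps;\cdot)$ back along $T_\eps|_M$ produces a signed Borel measure $\beta_j^\eps$ on $M$, namely $\beta_j^\eps(B):=\mu_j(A_\eps;T_\eps(B))$, and \eqref{eq:mu-parallel-local} now reads on $M$ as the signed-measure identity $\beta_i^\eps=\sum_{j=0}^{i}c_{i,j}\eps^{i-j}\,\mu_j(A;\cdot)|_M$. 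Applying the matrix inversion from the first step inside the vector space of signed measures on $M$ inverts this to $\mu_i(A;\cdot)|_M=\sum_{j=0}^{i}b_{i,j}\eps^{i-j}\beta_j^\eps$. Taking total variations of both sides and evaluating at $M$ gives
\begin{equation*}
    \fb_i^{\var}(\eps)\leq\sum_{j=0}^{i}|b_{i,j}|\eps^{i-j}|\beta_j^\eps|(M)\leq\sum_{j=0}^{i}|b_{i,j}|\eps^{i-j}|\mu_j|(A_\eps),
\end{equation*}
since $|\beta_j^\eps|(M)\leq|\mu_j|(A_\eps;T_\eps(M))\leq|\mu_j|(A_\eps)$. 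This clean version even delivers the bound without the factor $2$ appearing in \eqref{eq:bound-basic-up}.

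The main obstacle is the measure-theoretic step of promoting \eqref{eq:mu-parallel-local} (a scalar identity for each measurable $\hat B$) to a signed-measure identity on $M$; concretely, one must verify that $T_\eps|_M$ is a Borel isomorphism onto its image so that the pullback $\beta_j^\eps$ is unambiguously defined, and that $|\beta_j^\eps|(M)$ is indeed controlled by $|\mu_j|(A_\eps)$. Should this promotion turn out to be delicate, it can be bypassed by applying \eqref{eq:mu-parallel-local} piecewise along partitions of $N(A_\eps)$ nearly realizing $|\mu_j|(A_\eps;\cdot)$ via the Hahn decomposition, and then combining through the inverse matrix at the level of the resulting numerical integrals; the modest loss inherent in this bookkeeping is readily absorbed into the generous factor $2$ of the stated bound.
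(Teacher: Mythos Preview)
Your argument is correct and follows essentially the same route as the paper. For \eqref{eq:basic_from_support} both you and the paper invert the unit-lower-triangular system coming from \eqref{eq:mu-parallel} via Cramer's rule; your conjugation by $E_\eps$ is just a repackaging of the paper's preliminary multiplication by $\eps^{-i}$.

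For \eqref{eq:bound-basic-up} the two proofs are close cousins. The paper also promotes \eqref{eq:mu-parallel-local} to a local identity on $M$ (its equation \eqref{eq:basic_from_support_local}), then takes the Hahn decomposition $N(A)=N^+\cup N^-$ of $\mu_i(A;\cdot)$, applies the local inversion formula separately on $N^+$ and $N^-$, bounds each piece by $\sum_j|b_{i,j}|\eps^{i-j}|\mu_j|(A_\eps)$, and adds---hence the factor~$2$. Your version instead applies the triangle inequality for total variation directly to the signed-measure identity $\mu_i(A;\cdot)|_M=\sum_j b_{i,j}\eps^{i-j}\beta_j^\eps$, which is slicker and indeed yields the bound without the factor~$2$. (The paper's argument could also be tightened to drop the $2$, since $T_\eps(N^+)$ and $T_\eps(N^-)$ are disjoint and hence $|\mu_j|(A_\eps;T_\eps(N^+))+|\mu_j|(A_\eps;T_\eps(N^-))\leq|\mu_j|(A_\eps)$.) The measure-theoretic concern you flag is harmless: $T_\eps$ is a homeomorphism of $\R^d\times\eS^{d-1}$, so its restriction to $M$ is a Borel isomorphism onto its image, and the pullback of total variation along a Borel isomorphism equals the total variation of the pullback.
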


\begin{proof}
Fix $\eps>0$. Multiplying, for any $i\in I_d$, equation \eqref{eq:mu-parallel} by $\e^{-i}$ we obtain
\begin{equation*}
	\e^{-i}\mu_i(A_{\eps})=\sum_{j=0}^ic_{i,j}\,\e^{-j}\,\fb_j(\e), \quad i=0,\ldots,d-1.
\end{equation*}
These $d$ equations form a linear system $C\mathrm{x}=\mathrm{y}$ connecting the vectors $$
\mathrm{y}:=(\e^{-i}\mu_i(A_{\eps}))_{i=0,\ldots,d-1} \text{ and } \mathrm{x}:=(\e^{-j}\,\fb_j(\e))_{i=0,\ldots,d-1}
$$
via the triangular matrix $C:=(c_{i,j})_{i,j=0,,\ldots,d-1}$ with diagonal coefficients equal to 1. (Recall that $c_{i,j}=0$ for $i<j$ and $c_{i,i}=1$.)
Hence, the system is invertible and by Cramer's rule one obtains formula \eqref{eq:basic_from_support}.

It remains to prove \eqref{eq:bound-basic-up}. 
For this purpose we consider local versions of the basic functions.
For any $j\in I_d$ and any $\sH^{d-1}$-measurable set $B\subseteq N(A)$, define
\begin{equation}   \label{eq:fb-local} \fb_j(A;B,t):=\int_{N(A)}\1{\{t<\dl(A,x,u)\}}\times\1{\{(x,u)\in B\}}\mu_j(A;\d (x,u)),\quad t>0,
\end{equation}
and define $\fb_j^{\var}(A;B,\cdot)$ analogously by replacing $\mu_j(A,\cdot)$ with $|\mu_j|(A,\cdot)$.
In view of the local nature of \eqref{eq:mu-parallel-local}, it is easy to see that equation \eqref{eq:basic_from_support} has a local counterpart: for any $\sH^{d-1}$-measurable $B\subseteq N(A)$,\begin{equation}\label{eq:basic_from_support_local}
	\fb_i(A;B,\e)=\sum_{j=0}^ib_{i,j}\e^{i-j}\,\mu_j(A_{\e};T_{\e}(B)),
\end{equation}
where the mapping $T_\e$ is as in \eqref{eq:T_eps-def}.

Recalling that $\mu_i(A;\cdot)$ is a signed measure on $N(A)$, let $N^+$ and $N^-$ be the positive and negative set of a Hahn decomposition of $N(A)$ w.r.t. $\mu_i(A;\cdot)$.
For fixed $\e >0$, let
\begin{equation}
\fb_i^+(A;\eps):=\fb_i(A;N^+,\e)\quad\text{and}\quad  \fb_i^-(A;\eps):=-\fb_i(A;N^-,\e).
\end{equation}
Note that $\fb_i(A;\eps)=\fb_i^+(A;\eps)-\fb_i^-(A;\eps)$ and $\fb_i^{\var}(A;\eps)=\fb_i^+(A;\eps)+\fb_i^-(A;\eps)$, see also Remark \ref{rem:pm-basic}.
Applying \eqref{eq:basic_from_support_local} to the sets $N^{\pm}$, we infer that
$$
\begin{aligned}
\fb_i^{\pm}(A;\eps)&=\sum_{j=0}^ib_{i,j}\e^{i-j}\,\mu_j(A_{\e};T_{\e}(N^{\pm}))\\
&\le\sum_{j=0}^i|b_{i,j}|\e^{i-j}\,|\mu_j|\,(A_{\e};T_{\e}(N^{\pm}))\leq\sum_{j=0}^i|b_{i,j}|\e^{i-j}\,|\mu_j|(A_{\e}).
\end{aligned}
$$
Now inequality \eqref{eq:bound-basic-up} is immediate from the relation $
\fb_i^{\var}(A;\eps)=\fb_i^{+}(A;\eps)+\fb_i^{-}(A;\eps)$, completing the proof.
\end{proof}

Employing Propositions \ref{lem:support-est} and \ref{prop:invert-rela}, we can now provide a proof of Proposition~\ref{prop:basic-est}, the missing piece of the proof of Theorem~\ref{thm:basic-exp} discussed in Section~\ref{sec:4}.

\begin{proof}[{\bf Proof of Proposition~\ref{prop:basic-est}:}] \label{lem:basic-est-proof}

Fix $R>\sqrt{2}$ and $i\in I_d$. Note that, by Proposition \ref{lem:support-est}, for every $j=0,\ldots,i$, there exists a constant $c'_j>0$ such that $|\mu_j|(A_\eps)\leq c'_j\eps^j$ holds for all compact sets $A\in\R^d$ and all $\eps>R\diam(A)$.
Hence, by \eqref{eq:bound-basic-up},
$$
\fb_i^{\var}(\eps)\leq 2\sum_{j=0}^i|b_{i,j}|\e^{i-j}\cdot c'_j\eps^j=c_i\cdot\e^i,
$$
where $c_i:=2\sum_{j=0}^ic'_j|b_{i,j}|$ depends on $R$ (but not on $A$).
This completes the proof. 
\end{proof}

\begin{remark}
    We think that it should also be possible to provide a more direct proof of Proposition~\ref{prop:basic-est} employing the integral representations of the support measures. We found it easier to use this detour via the curvature measures of parallel sets.
\end{remark}

\section{Support contents and support scaling exponents}\label{sec:support-funct}

	In this section, we introduce a second family of functionals associated to any compact set in $\R^d$: the \emph{support contents}. Just as the basic contents they are naturally derived from the support measures. We study their properties and explore their relation with the basic contents 
as well as with (outer) Minkowski contents and fractal curvatures.
    For any nonempty compact set $A\subseteq\R^d$ and $i\in I_d$, we call
	the function
	\begin{equation}
		\eps\mapsto\mu_i(A_{\e})
	\end{equation}
	the {\em $i$-th tube-support function} of $A$.
    Tube-support functions take the role of the basic functions in the subsequent definitions and considerations.

\begin{definition}[Support content of a compact set]\label{def:support-content}
Let $A\subseteq\R^d$ be a nonempty compact set. For any $i\in I_d$ and $q\in\R$, we define the {\em ($q$-dimensional)  $i$-th upper support content of $A$} by
\begin{equation}
	\uSC_i^q(A):=\limsup_{\e\to0^+}\e^{q-i}\mu_i(A_{\e}),
	\end{equation}
	and  its total variation analog by
	\begin{equation}
	\uSC_i^{\var,q}(A):=\limsup_{\e\to0^+}\e^{q-i}|\mu_i|(A_{\e}).
	\end{equation}
	We also introduce lower variants by replacing the upper limits by lower limits, and denote them by $\lSC_i^q(A)$ and $\lSC_i^{\var,q}(A)$, respectively.
	Moreover, we define the {\em upper $i$-th support scaling exponent} $\usex_i(A)$ of $A$ by
	\begin{equation}
		\label{eq:support-exp}
		\usex_i:=\usex_i(A):=\inf\{q\in\R:\uSC_i^{\var,q}(A)=0\}=\sup\{q\in\R:\uSC_i^{\var,q}(A)=+\iy\},
	\end{equation}
	as well as its {\em lower} counterpart $\lsex_i=\lsex_i(A)$, by replacing $\uSC_i^{\var,q}(A)$ by $\lSC_i^{\var,q}(A)$.
	
	If $\usex_i=\lsex_i$, then we denote the common value by $\sex_i$ and call it the {\em $i$-th support scaling exponent} of $A$. If, in addition, $\uSC_i^{\sex_i}(A)=\lSC_i^{\sex_i}(A)$, then  we denote the common value by $\SC_i^{\sex_i}(A)$ and call it the {\em ($\sex_i$-dimensional) $i$-th  support content of $A$}. 
\end{definition}

Before turning to the general relations between support contents and basic contents (and between the corresponding exponents), let us first discuss the two special cases $i=0$ and $i=d-1$.
\begin{remark} \label{rem:b0-s0-rel}
    For any nonempty compact set $A\subset\R^d$ and any $q\in\R$,
    \begin{align}\label{eq:SC0=BC0}
    \uSC_0^q(A)=\uBC_0^q(A) \quad \text{ and } \quad \lSC_0^q(A)=\lBC_0^q(A).
    \end{align}
    As a consequence,
    \begin{align}\label{eq:m0=s0}
    \usex_0(A)=\ubex_0(A) \quad \text{ and } \quad \lsex_0(A)=\lbex_0(A).
    \end{align}
    This is immediate from equation \eqref{eq:mu-parallel}, which in case $i=0$ reads $\mu_0(A_\e)=\fb_0(\e)$, and the definitions.
\end{remark}

\begin{proposition}\label{prop:s_d-1}
   Let $A\subseteq\R^d$ be a nonempty compact set. For any $q\in[0,d)$,
   \begin{equation} \label{eq:SCd-1vsMinkout}
        \frac{2}{d}\uSC_{d-1}^{q}(A)
        \leq \uMinkout{q}(A)\leq \frac{2}{d-q}\uSC_{d-1}^{q}(A) \text{ and } \frac{2}{d-q}\lSC_{d-1}^{q}(A)\leq \lMinkout{q}(A).
    \end{equation}
    Moreover, $\uSC^{d}_{d-1}(A)=\uMinkout{d}(A)=0$.     As a consequence, $\usex_{d-1}(A)\leq d$,
    \begin{equation}\label{eq:s_d-1_dimout}
        \usex_{d-1}(A)=\udimout_M(A) \quad \text{ and } \quad \lsex_{d-1}(A)\leq\ldimout_M(A).
    \end{equation}
    In particular, if $\sex_{d-1}(A)$ exists (i.e., if $\lsex_{d-1}(A)=\usex_{d-1}(A)$), then $\dimout_M A$ exists and $\dimout_M A=\sex_{d-1}(A)$.
    Moreover, if $\SC_{d-1}^{s}(A)$ exists (i.e., if $\lSC_{d-1}^s(A)=\uSC_{d-1}^s(A)$) for some $s\in[0,d)$, then also the corresponding outer Minkowski content $\Minkout{s}(A)$ exists and $\Minkout{s}(A)=\frac 2{d-s}\SC_{d-1}^{s}(A)$.
\end{proposition}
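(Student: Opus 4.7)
The plan is to reduce every assertion to the key integral identity
\begin{equation*}
V(A_\eps\setminus A) \;=\; 2\int_0^\eps \mu_{d-1}(A_t)\,dt, \qquad \eps > 0.
\end{equation*}
I would derive it by specialising \eqref{eq:mu-parallel} to $i=d-1$, observing that the coefficients $c_{d-1,j} = \binom{d-j}{1}\kappa_{d-j}/\kappa_1 = \omega_{d-j}/2$ coincide (up to the factor $1/2$) with the weights $\omega_{d-j}$ appearing in \eqref{eq:v-par}. Differentiating \eqref{eq:v-par} in $\eps$ at any continuity point of the $\fb_j$ then yields $\tfrac{d}{d\eps}V(A_\eps\setminus A) = 2\mu_{d-1}(A_\eps)$, and integrating against the boundary value $V(A_0\setminus A)=0$ closes the identity.

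The upper bound $\uMinkout{q}(A) \leq \frac{2}{d-q}\uSC_{d-1}^q(A)$ is then a direct consequence: if $\uSC_{d-1}^q(A)=L<\infty$, then $\mu_{d-1}(A_t) \leq (L+\delta)t^{d-1-q}$ for all sufficiently small $t$, and integration via the identity yields $V(A_\eps\setminus A) \leq \frac{2(L+\delta)}{d-q}\eps^{d-q}$; taking $\limsup$ and letting $\delta \to 0$ closes the argument. The twin bound $\frac{2}{d-q}\lSC_{d-1}^q(A) \leq \lMinkout{q}(A)$ follows by the same scheme using the uniform $\liminf$-estimate $\mu_{d-1}(A_t) \geq (L'-\delta)t^{d-1-q}$. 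Whenever $\SC_{d-1}^s(A)$ exists, $\mu_{d-1}(A_t) = Ct^{d-1-s}(1+o(1))$ with $C=\SC_{d-1}^s(A)$, so the identity gives $V(A_\eps\setminus A) = \frac{2C}{d-s}\eps^{d-s}(1+o(1))$, hence $\Minkout{s}(A) = \frac{2}{d-s}\SC_{d-1}^s(A)$.

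The left-hand estimate $\frac{2}{d}\uSC_{d-1}^q(A) \leq \uMinkout{q}(A)$ is the main obstacle, since the $\limsup$ in $\uSC$ may be realised on sparse scales where the integrated quantity $V$ offers no immediate control. My primary strategy is to upgrade the identity to the pointwise inequality $V(A_\eps\setminus A) \geq \frac{2\eps}{d}\mu_{d-1}(A_\eps)$ for every $\eps>0$, from which the bound on $\limsup$ follows at once upon multiplying by $\eps^{q-d}$. Expanding both sides via \eqref{eq:v-par} and \eqref{eq:mu-parallel} rewrites the difference as $\sum_{i=0}^{d-1}\omega_{d-i}\int_0^\eps t^{d-i-1}\bigl[\fb_i(t) - \tfrac{d-i}{d}\fb_i(\eps)\bigr]\,dt$. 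For $i=d-1$ the integrand is manifestly nonnegative, since $\fb_{d-1}=\fbv_{d-1}$ is nonnegative and monotonically decreasing; for $i<d-1$ the $\fb_i$ are signed, and here I would exploit the overall nonnegativity $\mu_{d-1}(A_t) = \tfrac{1}{2}\sum_j\omega_{d-j}t^{d-1-j}\fb_j(t) \geq 0$ combined with the decomposition $\fb_i = \fb_i^+ - \fb_i^-$ into monotone decreasing, nonnegative variations. As a fallback, the $S$-content route of \cite{RaWi2010} is available: from the key identity and Lebesgue differentiation one has $\sH^{d-1}(\partial A_r) = 2\mu_{d-1}(A_r)$ for almost every $r$, and the two-sided bounds relating $\uMinkout{s}$ to $\uS^s$ from \cite{RaWi2010} translate directly into the desired estimate.

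The remaining assertions are formal consequences. The elementary limit $V(A_\eps\setminus A) \to 0$ as $\eps\to 0^+$ yields $\uMinkout{d}(A)=0$, and the left-hand estimate applied for $q$ just below $d$, combined with the monotonicity of $\uSC_{d-1}^q$ in $q$, forces $\uSC_{d-1}^d(A)=0$ and hence $\usex_{d-1}(A)\leq d$. The equality $\usex_{d-1}(A) = \udimout_M(A)$ and the bound $\lsex_{d-1}(A) \leq \ldimout_M(A)$ then follow from the two-sided estimate by chasing the critical exponents at which the respective contents vanish. Finally, if $\sex_{d-1}(A) = s$ exists, the chain $s = \lsex_{d-1}(A) \leq \ldimout_M(A) \leq \udimout_M(A) = \usex_{d-1}(A) = s$ forces $\dimout_M(A) = s$.
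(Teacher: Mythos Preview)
Your integral identity $V(A_\eps\setminus A)=2\int_0^\eps\mu_{d-1}(A_t)\,dt$ is correct (and follows directly from comparing \eqref{eq:v-par} with \eqref{eq:mu-parallel}, no differentiation needed), and your derivation of the second and third inequalities in \eqref{eq:SCd-1vsMinkout} by integrating the defining $\limsup/\liminf$ bounds is clean --- in fact slightly more elementary than the paper's route, which passes through the mean-value estimate \cite[(3.2)]{RaWi2010} for the same purpose.

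The genuine gap is in your ``primary strategy'' for the first inequality. The pointwise bound $V(A_\eps\setminus A)\geq\frac{2\eps}{d}\mu_{d-1}(A_\eps)$ you aim at is precisely Kneser's inequality \cite{Kneser51} (equivalently, the monotonicity of $r\mapsto r^{-d}(V_A(r)-V_A(0))$), and it is not recoverable from the ingredients you list. In your expansion, the $i=d-1$ summand is indeed nonnegative, but for $i<d-1$ the terms can be genuinely negative (e.g.\ for the circle in Example~\ref{ex:circle} the $i=0$ term is negative for $\eps>R$), and neither the overall nonnegativity of $\mu_{d-1}(A_t)$ nor the $\fb_i^\pm$ decomposition furnishes the required cancellation. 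What is actually needed is the monotonicity of $t\mapsto t^{1-d}\mu_{d-1}(A_t)$, which is Kneser's result itself. Your fallback via the $S$-content route of \cite{RaWi2010} is therefore not optional but necessary, and it coincides with the paper's proof: the paper uses $2\mu_{d-1}(A_\eps)=V_A^{(+)}(\eps)\leq\sH^{d-1}(\partial A_\eps)$ together with the bound $\uS^s(A)\leq\frac{d}{(d-s)\kappa_{d-s}}\uMinkout{s}(A)$ from \cite{RaWi2010}, which in turn rests on Kneser's inequality. Similarly, for $\uSC_{d-1}^d(A)=0$ you should appeal to the pointwise Kneser bound $\eps\,\mu_{d-1}(A_\eps)\leq\frac d2 V(A_\eps\setminus A)\to 0$ directly; your limiting argument in $q$ fails because $\uMinkout{q}(A)$ need not stay bounded as $q\to d^-$.
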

\begin{proof}
    Note that $\mu_{d-1}(A_{\eps},\cdot)$ is a positive measure. It equals (up to a factor $\frac 12$) the surface area measure. More precisely, for its total mass, the relation
    \begin{equation} \label{eq:parsurfarea}
		2\mu_{d-1}(A_{\eps})=V_A^{(+)}(\eps)=\sH^{d-1}(\pa^+A_{\eps})\leq\sH^{d-1}(\pa A_{\eps})
	\end{equation}
	holds, with equality in the last relation for all $\e>0$ up to at most countably many exceptions. Here $V^{(+)}_A(\e)$ denotes the right derivative at $\e$ of the volume function $r\mapsto V_A(r):=V(A_r)$ and $\partial^+$ the positive boundary; see \cite[\S 4.3]{HugLasWeil}.
    Recalling the notion of (upper) S-content (see \eqref{eq:S-content}) and (upper) outer Minkowski content  (see \eqref{eq:outer-Mink}) and their relation (see \cite[Cor.~3.6]{RaWi2010} and the argument after \eqref{eq:S-content}), we conclude that, for any $s\in [0,d)$,
    \begin{equation*}
        \uSC_{d-1}^{s}(A)\leq\frac{(d-s)\kappa_{d-s}}{2}\overline{\mathcal{S}}^{s}(A)\leq \frac{d}{2}\overline{\mathcal{M}}^{\mathrm{out},s}(A),
    \end{equation*}
    which proves the first inequality in \eqref{eq:SCd-1vsMinkout}.
    Moreover, it follows from \cite[(3.2)]{RaWi2010} that for each $r>0$ there are $t_1,t_2\in(0,r]$ such that ($V_A'(t_1)$, $V_A'(t_2)$ exist and)
    \begin{equation} \label{eq:RaWi10_3.2}
        \frac{V_A'(t_1)}{(d-s)t_1^{d-s-1}}\leq \frac{V_A(r)-V_A(0)}{r^{d-s}}\leq \frac{V_A'(t_2)}{(d-s)t_2^{d-s-1}}.
    \end{equation}
    Recalling that at differentiability points $\eps$ of $V_A$ we have $2\mu_{d-1}(A_\eps)=V_A'(\eps)$, we conclude from the right hand inequality that
    \begin{align*}
        \overline{\mathcal{M}}^{\mathrm{out},s}(A)=\limsup_{r\to 0^+}\frac{V_A(r)-V_A(0)}{r^{d-s}}
        &\leq\limsup_{r\to 0^+}\frac{V_A'(t_2(r))}{(d-s)(t_2(r))^{d-s-1}}\\&\leq \limsup_{r\to 0^+}\frac{2\mu_{d-1}(A_r)}{(d-s)r^{d-1-s}}= \frac{2}{d-s}\uSC_{d-1}^{s}(A).
    \end{align*}
    This shows the second inequality in \eqref{eq:SCd-1vsMinkout} and the third one is obtained similarly from the left inequality in \eqref{eq:RaWi10_3.2}.

    It is deduced in \cite[p.1665]{RaWi2010} (from \cite[Satz 4]{Kneser51}), that, for any $r>0$,
    \begin{equation*}
        \sH^{d-1}(\partial A_r)\leq \frac dr (V_A(r)-V_A(0)).
    \end{equation*}
    Combining this with \eqref{eq:parsurfarea} and noting that $V_A(r)-V_A(0)\to 0$, as $r\to 0^+$, we conclude that $\uSC^{d}_{d-1}(A)=\limsup_{r\to 0^+} r \mu_{d-1}(A_r)=0$. The relation $\usex_{d-1}(A)\leq d$ is a direct consequence, and the `$\leq$'-relations in \eqref{eq:s_d-1_dimout} follow from those in \eqref{eq:SCd-1vsMinkout}.

    Furthermore, if $\sex_{d-1}(A)$ exists, then \eqref{eq:s_d-1_dimout} implies $\sex_{d-1}(A)\leq \ldimout_M A\leq \udimout_M A\leq \sex_{d-1}(A)$, showing the equality $\sex_{d-1}(A)=\dimout_M A$.
    Finally, if $\SC_{d-1}^{s}(A)$ exists for some $s\in [0,d)$, then \eqref{eq:SCd-1vsMinkout} implies
    $\frac{2}{d-s}\SC_{d-1}^{s}(A)\leq\lMinkout{s}(A)\leq \uMinkout{s}(A)\leq \frac{2}{d-s}\SC_{d-1}^{s}(A)$ (which holds even in case $\SC_{d-1}^{s}(A)$ equals $0$ or $\infty$). Hence $\Minkout{s}(A)$ exists and satisfies the last equation stated in Proposition~\ref{prop:s_d-1}, completing its proof.
    \end{proof}

With a bit of extra work it is possible to show that, even though the total mass of the support measure $\mu_{d-1}(A_\eps,\cdot)$ may differ from the surface area $\sH^{d-1}(\partial A_\e)$ for certain $\e>0$ (up to countably many), the asymptotic behavior as $\e\to 0^+$ of both quantities coincides. The (upper and lower) support contents of order $d-1$ coincide with the corresponding S-contents up to a universal normalization constant.
\begin{proposition}[Relation with S-content]
    For any nonempty compact set $A\subset\R^d$ and any $s\in[0,d)$,
    \begin{align} \label{eq:S-content-rel}
        \uSC_{d-1}^s(A)&=\tfrac{(d-s)\kappa_{d-s}}{2}\uSC^s(A) &\text{and}&& \lSC_{d-1}^s(A)&=\tfrac{(d-s)\kappa_{d-s}}{2}\lSC^s(A).
    \end{align}
   In particular, the support scaling exponent of order $d-1$ of $A$ equals its S-dimension, i.e., 
   \begin{align}
      \label{eq:S-dim-rel}
      \usex_{d-1}(A)&=\udim_S A &\text{and} &&\lsex_{d-1}(A)=\ldim_S A.
    \end{align}
\end{proposition}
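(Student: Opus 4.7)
The starting point is relation \eqref{eq:parsurfarea}: one has $2\mu_{d-1}(A_\eps) \le \sH^{d-1}(\partial A_\eps)$ for every $\eps > 0$, with equality holding outside a countable exceptional set $E \subset (0,\infty)$. Identity \eqref{eq:S-content-rel} is therefore precisely the assertion that, after division by $(d-s)\kappa_{d-s}\eps^{d-1-s}$, the two functions $\eps \mapsto 2\mu_{d-1}(A_\eps)$ and $\eps \mapsto \sH^{d-1}(\partial A_\eps)$ produce the same upper and lower limits as $\eps \to 0^+$. Granting \eqref{eq:S-content-rel}, the scaling exponent identities \eqref{eq:S-dim-rel} are immediate, since the positive finite factor $\tfrac{(d-s)\kappa_{d-s}}{2}$ does not affect the infimum of values $q \in [0,d)$ at which the $q$-dimensional content vanishes.

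The inequality $\uSC_{d-1}^s(A) \le \tfrac{(d-s)\kappa_{d-s}}{2}\uS^s(A)$, together with its lower analog, follows immediately from the pointwise estimate in \eqref{eq:parsurfarea}: divide by $(d-s)\kappa_{d-s}\eps^{d-1-s}$ and pass to $\limsup$ respectively $\liminf$ as $\eps \to 0^+$, an operation that preserves inequalities between nonnegative functions.

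For the reverse inequality I exploit the countability of $E$. Starting from a sequence $(\eps_n)$ realising the $\limsup$ that defines $\uS^s(A)$, the idea is to replace any $\eps_n \in E$ by a nearby $\eps_n' \in E^c$ without changing the limit value; since $\sH^{d-1}(\partial A_\eps) = 2\mu_{d-1}(A_\eps)$ on $E^c$, the resulting limit is then automatically bounded above by $\tfrac{2}{(d-s)\kappa_{d-s}}\uSC_{d-1}^s(A)$, which is the sought inequality. To enable this replacement I invoke the Kneser-Stach\'o regularity developed in \cite[\S 4.3]{HugLasWeil}: at any exceptional point $\eps_0 \in E$, $\sH^{d-1}(\partial A_{\eps_0})$ is dominated by $\max\{V_A^{(+)}(\eps_0), V_A^{(-)}(\eps_0)\}$, and each of these one-sided derivatives of $V_A$ can be recovered as a one-sided limit of the regular values $V_A^{(+)}(\eps) = 2\mu_{d-1}(A_\eps)$ taken at $\eps \in E^c$. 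A diagonal construction then produces the desired $(\eps_n')$, and the $\liminf$ identity is obtained by an analogous approximation.

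The principal obstacle is this regularity step: one must guarantee that the singular surface-area contribution carried by the exceptional values $\eps_0 \in E$ is always dominated by values from the regular set $E^c$ on at least one side of $\eps_0$. Should the direct Kneser-Stach\'o analysis prove too delicate to carry out, an alternative is to chain the inequalities of Proposition \ref{prop:s_d-1} with content-level comparisons between $\uMinkout{s}(A)$ and $\uS^s(A)$ extracted from \cite[Cor.~3.6]{RaWi2010} and its proof, thereby trapping $\uS^s(A)$ between multiples of $\uSC_{d-1}^s(A)$ and bypassing the semi-continuity argument entirely.
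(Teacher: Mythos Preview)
Your approach is correct and coincides with the paper's: both obtain the easy `$\le$' direction from the pointwise bound in \eqref{eq:parsurfarea}, and both handle the reverse direction by approximating the surface area $\sH^{d-1}(\partial A_r)$ at exceptional radii via one-sided limits of $V_A'(t)=2\mu_{d-1}(A_t)$ along regular $t$. The paper is more precise about the key regularity input: for the upper identity it invokes \cite[Cor.~2.6]{RaWi2010} for the bound $\sH^{d-1}(\partial A_r)\le \lim_{t\to r^-,\,t\in D}V_A'(t)=V_A^{(-)}(r)$ (your attribution to \cite[\S4.3]{HugLasWeil} is slightly off---that section gives $V_A^{(+)}(\eps)=\sH^{d-1}(\partial^+A_\eps)$, not a bound on the full boundary $\partial A_\eps$), and for the lower identity it uses the right-continuity of $V_A^{(+)}$ to write $2\mu_{d-1}(A_r)=\lim_{t\to r^+,\,t\in D}\sH^{d-1}(\partial A_t)$ and approximates from the right. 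So the only gap in your plan is the source of the semicontinuity bound; once you replace the Hug--Last--Weil reference with the Rataj--Winter corollary, your argument and the paper's are the same.
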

\begin{proof} 
    The `$\leq$'-relations in \eqref{eq:S-content-rel} follow at once from \eqref{eq:parsurfarea}. For the reverse relations, we recall that, at any $r>0$,  the left and right derivatives $V_A^{(-)}$ and $V_A^{(+)}$ of $V_A$ exist. Moreover, $V_A^{(-)}$ is continuous from the left and $V_A^{(+)}$ of $V_A$ is continuous from the right, see \cite{Stacho76}.
    Let $D\subset(0,\infty)$ be the set of values $r$ such that $V_A'(r)$ exists. Write $S_A(r):=\sH^{d-1}(\partial A_r)$ and recall from \cite[Cor.~2.6]{RaWi2010} that the left continuity of $V_A^{(-)}$ implies that, for any $r>0$,
    \begin{equation}\label{eq:S-content-upperbd}
        S_A(r)\leq\lim_{t\to r^-, t\in D} V_A'(t).
    \end{equation}
    Fix $\delta>0$ and let $(\e_n)_{n\in\N}$ be a decreasing null sequence such that
    \begin{equation*}
        \uSC^s(A)=\lim_{n\to\infty} \frac{S_A(\e_n)}{(d-s)\kappa_{d-s}\e_n^{d-s-1}}.
    \end{equation*}
    By \eqref{eq:S-content-upperbd}, we can find for each $n\in\N$ some $r_n<\e_n$ with $r_n\in D$ such that
    $$
    \frac{S_A(\e_n)}{(d-s)\kappa_{d-s}\e_n^{d-s-1}}\leq \frac{V_A'({r_n})}{(d-s)\kappa_{d-s} r_n^{d-s-1}}+\delta=\frac{2\mu_{d-1}(A_{r_n})}{(d-s)\kappa_{d-s} r_n^{d-s-1}}+\delta.
    $$
    It follows that
    $$
    \uSC^s(A)\leq \limsup_{n\to\infty} \frac{2\mu_{d-1}(A_{r_n})}{(d-s)\kappa_{d-s} r_n^{d-s-1}}+\delta
    \leq\frac{2}{(d-s)\kappa_{d-s}}\uSC^s_{d-1}(A)+\delta.
    $$
    Since $\delta>0$ was arbitrary, this shows the `$\geq$'-relation in the first equation in \eqref{eq:S-content-rel}.

    Similarly, by \eqref{eq:parsurfarea} and the right continuity of $V_A^{(+)}$, we have
    \begin{equation} \label{eq:S-content-lowerbd}
    2\mu_{d-1}(A_r)=V_A^{(+)}(r)=\lim_{t\to r^+} V_A^{(+)}(t)=\lim_{t\to r^+,t\in D} V_A'(t)=\lim_{t\to r^+,t\in D} S_A(t).
    \end{equation}
    Now fix again $\delta>0$. Let $(\e_n)_{n\in\N}$ be a decreasing null sequence such that
    \begin{equation*}
        \lSC^s_{d-1}(A)=\lim_{n\to\infty} \frac{\mu_{d-1}(A_{\e_n})}{\e_n^{d-s-1}}.
    \end{equation*}
    By \eqref{eq:S-content-lowerbd}, we can find for each $n\in\N$ some $r_n>\e_n$ with $r_n\in D$ such that
    $$
    \frac{2\mu_{d-1}(A_{\e_n})}{\e_n^{d-s-1}}\geq \frac{S_A({r_n})}{r_n^{d-s-1}}-\delta.
    $$
    It follows that
    $$
    2\lSC^s_{d-1}(A)\geq \liminf_{n\to\infty} \frac{S_A(r_n)}{r_n^{d-s-1}}-\delta\geq \liminf_{r\to 0^+} \frac{S_A(r)}{r^{d-s-1}}-\delta=(d-s)\kappa_{d-s}\lSC^s(A)-\delta.
    $$
    Since $\delta>0$ was arbitrary, this shows the `$\geq$'-relation in the second equation in \eqref{eq:S-content-rel} and completes the proof.
\end{proof}

From equation~\eqref{eq:m0=s0} one might get the idea that $\bex_i(A)=\sex_i(A)$ may also hold for indices $i\neq 0$. Although this does happen for certain sets $A$ and indices $i$, in general, this is far from the truth. For instance, according to Theorem~\ref{thm:basic-exp}, the support measure $\mu_i(A,\cdot)$ vanishes if the outer Minkowski dimension dimension of $A$ is smaller than $i$  (resulting in $\bex_i(A)=-\infty$), while $\sex_i$ will always be nonnegative. 
Some general relations between the scaling exponents $\usex_i$ and $\ubex_i$, and $\udimout_M$ are summarized in the following statement, which is the main result of this section.

\begin{theorem} \label{thm:support-exp}
    Let $A\subset\R^d$ be a nonempty compact set, and let  $i\in I_d$. Then, for every $\e>0$,   $\mu_i(A_\eps;\cdot)\not\equiv 0$.
     Moreover, the following inequalities hold:
        \begin{equation}\label{eq:s_i-ineq}
		0\leq \lsex_i(A)\leq \usex_i(A)\leq \udimout_M A.
	\end{equation}	
    Furthermore,
     \begin{equation}\label{eq:s<max}
     \usex_i(A)\leq \max_{j\in\{0,\ldots, i\}}\ubex_j(A) \quad \text{ and } \quad  \lsex_i(A)\leq \max_{j\in\{0,\ldots, i\}}\lbex_j(A).
 \end{equation}
 Similarly,
  \begin{equation}\label{eq:m<smax}
     \ubex_i(A)\leq \max_{j\in\{0,\ldots, i\}}\usex_j(A)
     \quad \text{ and } \quad
     \lbex_i(A)\leq \max_{j\in\{0,\ldots, i\}}\lsex_j(A).
 \end{equation}
 Finally,
 \begin{equation} \label{eq:sd-1-dim}
     \udimout_M(A)=\max_{i\in I_d}\usex_i(A)
     \quad \text{ and } \quad
     \ldimout_M(A)\geq\max_{i\in I_d}\lsex_i(A)\geq \ldim_S(A).
 \end{equation}
\end{theorem}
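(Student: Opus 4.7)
The plan is to handle each piece of the theorem in turn, leaning on the local Steiner formula \eqref{eq:mu-parallel-local} and its inverse from Proposition~\ref{prop:invert-rela}, together with results already established.

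First, to see $\mu_i(A_\eps;\cdot)\not\equiv 0$ for every $\eps>0$ and $i\in I_d$, I would apply Proposition~\ref{prop:pos-curv} to the nonempty compact set $A_\eps$. This yields a set $B\subset N(A_\eps)$ with $\sH^{d-1}(B)>0$ on which the generalized principal curvatures of $A_\eps$ are strictly positive. Crucially, because $A_\eps$ is a parallel set, these curvatures are also finite, so each elementary symmetric function $H_{d-1-i}(A_\eps,\cdot)$ is strictly positive on $B$, and the representation \eqref{eq:support-formula} gives $\mu_i(A_\eps;B)>0$. The lower bound $\lsex_i(A)\ge 0$ in \eqref{eq:s_i-ineq} then follows from this non-triviality by an argument ensuring that $|\mu_i|(A_\eps)$ decays at most polynomially as $\eps\to 0^+$ (e.g., by combining the inverted bound \eqref{eq:bound-basic-up} for $i=0$ with the lower-exponent version of \eqref{eq:m<smax} below); the inequality $\lsex_i\le\usex_i$ is immediate from the definitions.

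For the upper versions of \eqref{eq:s<max} and \eqref{eq:m<smax}, the key bound
\begin{equation*}
|\mu_i|(A_\eps)\leq\sum_{j=0}^{i}c_{i,j}\,\eps^{i-j}\,\fb_j^{\var}(\eps)
\end{equation*}
follows from \eqref{eq:mu-parallel-local} with $\hat B=N(A_\eps)$ and the triangle inequality for total variations. Multiplying by $\eps^{q-i}$: for any $q>\max_{j\le i}\ubex_j(A)$ each term $\eps^{q-j}\fb_j^{\var}(\eps)$ has $\limsup=0$, hence $\lim=0$ by non-negativity, so $\uSC_i^{\var,q}(A)=0$ and the upper version of \eqref{eq:s<max} follows. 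Symmetrically, \eqref{eq:bound-basic-up} gives $\eps^{q-i}\fb_i^{\var}(\eps)\le 2\sum_{j}|b_{i,j}|\eps^{q-j}|\mu_j|(A_\eps)$, and the same scaling argument yields the upper version of \eqref{eq:m<smax}.

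The lower-exponent counterparts are the main obstacle, since $\liminf$ does not commute with finite sums and a termwise analysis of the key bound fails. The strategy is to exploit the monotonicity of each $\fb_j^{\var}$ (Proposition~\ref{def:basic}): for $q>\max_{j\le i}\lbex_j(A)$, fix $q'\in(\max_j\lbex_j,q)$. For each $j$ there exists $\eps_n^{(j)}\to 0$ with $(\eps_n^{(j)})^{q'-j}\fb_j^{\var}(\eps_n^{(j)})\to 0$; monotonicity of $\fb_j^{\var}$ then upgrades this to
\[
\eps^{q-j}\fb_j^{\var}(\eps)\le C^{q-j}(\eps_n^{(j)})^{q-q'}\cdot(\eps_n^{(j)})^{q'-j}\fb_j^{\var}(\eps_n^{(j)})\to 0
\]
uniformly for $\eps\in[\eps_n^{(j)},C\eps_n^{(j)}]$ and any fixed $C>1$. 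A scale-matching argument then locates a sequence $\eps_n\to 0$ lying in these good intervals for all $j\in\{0,\ldots,i\}$ simultaneously, so the right-hand side of the key inequality tends to zero along $\eps_n$, giving $\lSC_i^{\var,q}(A)=0$. The lower version of \eqref{eq:m<smax} is handled analogously via \eqref{eq:bound-basic-up}. Finally, the upper bound in \eqref{eq:s_i-ineq}, $\usex_i\le\udimout_M A$, combines the upper version of \eqref{eq:s<max} with Theorem~\ref{thm:basic-exp}'s equality $\max_j\ubex_j(A)=\udimout_M(A)$. In \eqref{eq:sd-1-dim}, the first equality combines this with $\usex_{d-1}(A)=\udimout_M(A)$ from Proposition~\ref{prop:s_d-1}, while the lower sandwich $\ldimout_M(A)\ge\max_i\lsex_i(A)\ge\ldim_S(A)$ combines the lower version of \eqref{eq:s<max} with Theorem~\ref{thm:basic-exp}'s bound $\max_j\lbex_j(A)\le\ldimout_M(A)$ and the identification $\lsex_{d-1}(A)=\ldim_S(A)$ from \eqref{eq:S-dim-rel}.
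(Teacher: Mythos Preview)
Your upper-exponent arguments and the derivation of \eqref{eq:sd-1-dim} match the paper's, but two parts of your proposal contain genuine gaps where the paper takes a different, cleaner route. For non-triviality and $\lsex_i\ge 0$, the paper applies Proposition~\ref{prop:pos-curv} to $A$ itself (not to $A_\eps$), obtaining $B\subset N(A)$ with $\dl(A,\cdot)\equiv\infty$ on $B$, and then feeds $\hat B=T_\eps(B)$ into the local Steiner formula \eqref{eq:mu-parallel-local} to get $\mu_i(A_\eps;T_\eps(B))=\sum_{j\le i} c_{i,j}\,\eps^{i-j}\mu_j(A;B)\ge c_{i,0}\,\mu_0(A;B)\,\eps^i$. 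This yields $|\mu_i|(A_\eps)\ge c\,\eps^i$ with $c>0$ \emph{independent of $\eps$}, delivering both non-triviality and $\lsex_i\ge 0$ at once. Your route (Proposition~\ref{prop:pos-curv} applied to each $A_\eps$ separately) gives no uniform-in-$\eps$ lower bound; your suggested path to $\lsex_i\ge 0$ via \eqref{eq:bound-basic-up} and \eqref{eq:m<smax} only reaches the case $i=0$; and the assertion that the curvatures on $B\subset N(A_\eps)$ are finite ``because $A_\eps$ is a parallel set'' is not justified.

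For the lower-exponent halves of \eqref{eq:s<max} and \eqref{eq:m<smax}, the paper argues in the \emph{opposite} direction: from $q<\lsex_i(A)$ one has $\eps^{q-i}|\mu_i|(A_\eps)\to+\infty$ as a full limit, so the bounding sum from Lemma~\ref{lem:m-decomp-up} diverges along every null sequence, and a pigeonhole argument extracts a single index $j$ and a subsequence along which $\eps^{q-j}\fb_j^{\var}(\eps)\to+\infty$. This sidesteps any need to synchronize null sequences across different $j$. Your ``scale-matching'' step is a genuine gap: the good intervals $[\eps_n^{(j)},C\eps_n^{(j)}]$ for distinct $j$ need not overlap (nothing prevents, e.g., $\eps_n^{(0)}=2^{-2^n}$ while $\eps_n^{(1)}=3^{-3^n}$), so the common sequence you posit is not guaranteed to exist. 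For the lower half of \eqref{eq:m<smax} there is an additional obstacle: the ``analogous'' argument via \eqref{eq:bound-basic-up} would require monotonicity of $\eps\mapsto|\mu_j|(A_\eps)$, which is nowhere established.
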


\begin{remark} \label{rem:increasing-exp}
For compact sets in $\R^2$, both exponents, $\usex_0$ and $\usex_1$, are completely determined by the basic exponents $\ubex_0$ and $\ubex_1$. Specifically, for any compact set $A\subset\R^2$, we have $\usex_0(A)=\ubex_0(A)$, as noted in Remark~\ref{rem:b0-s0-rel}. Furthermore, by Proposition~\ref{prop:s_d-1}, 
$\usex_1(A)=\udimout_M A=\max\{\ubex_0(A),\ubex_1(A)\}$.  In particular, this implies the relation $\usex_0(A)\leq\usex_1(A)$. Hence $\usex_0$ is never larger than $\usex_1$, in contrast to the situation for the basic exponents.
More generally, combining \eqref{eq:sd-1-dim} with \eqref{eq:s_d-1_dimout}, one can deduce that, for any compact set $A\subset\R^d$ and any $i\in I_d$, the relation
$$
\usex_{i}(A)\leq\usex_{d-1}(A)
$$
holds.

It is clear that the inequalities in \eqref{eq:m<smax} can be strict. (Recall that even $\ubex_i(A)=-\infty$ is possible, while $\usex_i(A)\geq 0$.) This is not so clear for the inequalities in \eqref{eq:s<max}. We think that the second $\leq$-relation in \eqref{eq:s<max} can be strict for some specially constructed sets.
In contrast to this, we believe that the first $\leq$-relation in \eqref{eq:s<max} is in fact an equality. We are not aware of any counterexamples.  If equality holds in these relations for some compact set $A\subset\R^d$, then the upper support scaling exponents form a non-decreasing sequence, that is, they satisfy
\begin{align}
    \label{eq:increasing-exp}
    \usex_0(A)\leq\usex_1(A)\leq\cdots\leq\usex_{d-2}(A)\leq\usex_{d-1}(A).
\end{align}
 While we have deduced this above for compact sets $A\subset\R^2$,  the general validity of \eqref{eq:increasing-exp} for sets in $\R^d$, $d\geq 3$, remains an interesting open question.
\end{remark}

Another immediate consequence of Theorem~\ref{thm:support-exp} is the following statement which complements Theorem~\ref{thm:basic-exp}.
\begin{corollary} \label{cor:lower-bd-max-m}
  For any nonempty compact set $A\subset\R^d$,
  $$
  \ldim_S(A)\leq \max_{i\in I_d}\lbex_i(A).
  $$
\end{corollary}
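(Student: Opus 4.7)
The plan is to chain together two relations that are already packaged inside Theorem~\ref{thm:support-exp}. First I would invoke the identity $\ldim_S A = \lsex_{d-1}(A)$ supplied by equation \eqref{eq:S-dim-rel}, which identifies the lower $S$-dimension with the lower support scaling exponent of top order. Next I would apply the second inequality in \eqref{eq:s<max} with the specific index $i=d-1$ to obtain
\[
\lsex_{d-1}(A)\leq \max_{j\in\{0,\ldots,d-1\}}\lbex_j(A)=\max_{j\in I_d}\lbex_j(A).
\]
Concatenating these two inequalities immediately yields the asserted bound $\ldim_S(A)\leq \max_{i\in I_d}\lbex_i(A)$.

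There is no genuine obstacle here, since the real work has been absorbed into Theorem~\ref{thm:support-exp}: the identification $\lsex_{d-1}=\ldim_S$ is part of the content of \eqref{eq:S-dim-rel}, and the dominance of $\lsex_i$ by the maximum of the lower basic exponents up to index $i$ is \eqref{eq:s<max}. Alternatively, one could avoid the explicit identification by using the ``last'' chain in \eqref{eq:sd-1-dim}, namely $\max_{i\in I_d}\lsex_i(A)\geq \ldim_S(A)$, and then bound each $\lsex_i(A)$ by $\max_{j\in\{0,\ldots,i\}}\lbex_j(A)\leq \max_{j\in I_d}\lbex_j(A)$ using \eqref{eq:s<max}; taking the maximum over $i$ on both sides delivers the same conclusion. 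Either route is essentially a one-line argument, so the statement is best presented as a brief corollary immediately following Theorem~\ref{thm:support-exp}.
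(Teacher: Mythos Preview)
Your proposal is correct and essentially matches the paper's proof. In fact, your ``alternative route'' is precisely the argument the paper gives (combine the last inequality in \eqref{eq:sd-1-dim} with the second inequality in \eqref{eq:s<max}), while your primary route is the trivially equivalent variant that uses the equality $\lsex_{d-1}=\ldim_S$ from \eqref{eq:S-dim-rel} directly rather than the weaker inequality packaged into \eqref{eq:sd-1-dim}.
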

\begin{proof}
   Combine the last inequality in \eqref{eq:sd-1-dim} with the last one in \eqref{eq:s<max}.
\end{proof}

For the proof of Theorem \ref{thm:support-exp} we require an analog of \eqref{eq:bound-basic-up} for tube-support functions.

\begin{lemma}\label{lem:m-decomp-up}
Let $A\subseteq\R^d$ be nonempty and compact.
    Then, for any $i\in I_d$, the following inequality holds for all $\e>0$:
    \begin{equation}\label{eq:mu-parallel-bound}
	|\mu_i|(A_{\eps})\leq2\sum_{j=0}^ic_{i,j}\,\e^{i-j}\,\fb_j^{\var}(\e),
 \end{equation}
where the constants $c_{i,j}$ are as defined in \eqref{eq:cij}.
\end{lemma}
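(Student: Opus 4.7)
The proof proposal is to reduce this statement directly to the local Steiner formula \eqref{eq:mu-parallel-local} by a Hahn-decomposition argument, which is exactly analogous to the passage from \eqref{eq:basic_from_support_local} to \eqref{eq:bound-basic-up} used in the proof of Proposition~\ref{prop:invert-rela}. Indeed, observe first that all coefficients $c_{i,j}$ defined in \eqref{eq:cij} are nonnegative (binomial coefficient times a ratio of positive $\kappa$-values). Hence, for any $\sH^{d-1}$-measurable $\hat B\subseteq N(A_\e)$, writing $\mu_j=\mu_j^+-\mu_j^-$ on the right-hand side of \eqref{eq:mu-parallel-local} and using that the integrand $\1\{\e<\dl(A,x,u)\}\1\{T_\e(x,u)\in\hat B\}$ is nonnegative yields the useful bound
\begin{equation}\label{eq:plan-abs-bound}
|\mu_i(A_\e;\hat B)|\leq \sum_{j=0}^i c_{i,j}\,\e^{i-j}\int_{N(A)}\1\{\e<\dl(A,x,u)\}\1\{T_\e(x,u)\in\hat B\}\,|\mu_j|(A;\d(x,u)).
\end{equation}

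Next, I would take a Hahn decomposition $N(A_\e)=\hat N^+\cup \hat N^-$ with respect to the signed measure $\mu_i(A_\e;\cdot)$, so that
\begin{equation*}
|\mu_i|(A_\e)=\mu_i(A_\e;\hat N^+)+\bigl|\mu_i(A_\e;\hat N^-)\bigr|.
\end{equation*}
Applying \eqref{eq:plan-abs-bound} separately to $\hat B=\hat N^+$ and $\hat B=\hat N^-$ and dropping the indicator $\1\{T_\e(x,u)\in\hat B\}$ (bounding it by $1$) converts each inner integral into $\fb_j^{\var}(\e)$ as defined in \eqref{eq:m_i-tot}. Summing the two resulting estimates produces exactly the factor $2$ in front and yields \eqref{eq:mu-parallel-bound}.

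There is no real obstacle here: the only two points to verify carefully are the nonnegativity of the $c_{i,j}$ (so that absolute values pass through the sum in \eqref{eq:mu-parallel-local}) and the measurability of $\hat N^{\pm}$, which is standard from Hahn's decomposition theorem together with the fact that $|\mu_i|(A_\e)$ is finite by \cite[Corollary 4.4]{HugLasWeil}. The factor $2$ is intrinsic to the Hahn-decomposition strategy and matches the analogous factor appearing in \eqref{eq:bound-basic-up}; it cannot obviously be removed without extra cancellation hypotheses, and for the applications of this lemma in Theorem~\ref{thm:support-exp} (bounding $\usex_i$ by a maximum of basic exponents) a constant prefactor is harmless.
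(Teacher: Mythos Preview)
Your proof is correct and follows essentially the same approach as the paper, which omits the argument and refers to the analogous Hahn-decomposition reasoning in the proof of Proposition~\ref{prop:invert-rela} (noting that here the $c_{i,j}$ are strictly positive). Your remark that the factor $2$ is not truly essential is accurate---since $\hat N^+$ and $\hat N^-$ partition $N(A_\e)$, the two indicator terms actually sum to $1$ on $\{\e<\dl(A,x,u)\}$---but, as you note, the constant is harmless for the intended applications.
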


This inequality can be verified following the lines of the proof of Proposition \ref{prop:invert-rela} (and  keeping in mind that $c_{i,j}$ are strictly positive for $i\geq j$). Therefore we omit it.

\begin{proof}[{\bf Proof of Theorem~\ref{thm:support-exp}:}]
   Let $B$ be the set from Proposition~\ref{prop:pos-curv} and, for any $\e>0$, let $T_\e$ be the mapping defined in equation \eqref{eq:T_eps-def}.  Then $T_\e(B)\subset N(A_\e)$,  and so,
    by \eqref{eq:mu-parallel-local},
    for any $i\in I_d$,
    \begin{equation}
        \mu_i(A_{\eps};T_\e(B))=\sum_{j=0}^ic_{i,j}\,\e^{i-j}\,\mu_j(A;B).
    \end{equation}
    Here we have used that on the set $B$ the local reach is infinite.
    Recall that $c_{i,j}>0$. By Proposition \ref{prop:pos-curv}, we have $\mu_j(A;B)\geq 0$, with strict inequality for $j=0$.
    This implies that $\mu_i(A_{\eps};T_\e(B))\geq c\cdot\e^{i}$, where $c:=c_{i,0}\cdot\mu_0(A;B)>0$.
    We conclude that
    \begin{equation}
        |\mu_i|(A_{\eps})\geq|\mu_i|(A_{\eps};T_\e(B))\geq\mu_i(A_{\eps};T_\e(B))\geq c\cdot\e^{i}
    \end{equation}
    for any $\e>0$, which implies $\mu_i(A_\eps;\cdot)\not\equiv 0$ and, moreover, that $\lsex_i(A)\geq 0$.

    For the proof of the relations in \eqref{eq:s<max}, we will employ Lemma~\ref{lem:m-decomp-up}.

     To prove \eqref{eq:s<max}, fix $i\in I_d$. Let $q>\max_{j\leq i}\ubex_j(A)$. Then, by definition of $\ubex_j$, one has, for each $j=0,\ldots, i$, $\e^{q-j} \fb_j^{\var}(A;\e)\to 0$, as $\e\to 0^+$. By  Lemma~\ref{lem:m-decomp-up}, this implies that
     $$
     \e^{q-i}|\mu_i|(A_{\eps})\leq2\sum_{j=0}^ic_{i,j}\,\e^{q-j}\,\fb_j^{\var}(\e)\to 0 \text{ as } \e\to 0^+,
     $$
     and hence $q\geq \usex_i(A)$. Since $q>\max_{j\leq i}\ubex_j(A)$ was arbitrary, we conclude that $\max_{j\leq i}\ubex_j(A)\geq \usex_i(A)$ as asserted in \eqref{eq:s<max}.

     For the second inequality in \eqref{eq:s<max},  let $q< \lsex_i(A)$. Then, by definition of $\lsex_i$, there is a null sequence $(\e_n)$ such that $\e_n^{q-i}|\mu_i|(A_{\eps_n})\to +\infty$ as $n\to\infty$. Hence, by Lemma~\ref{lem:m-decomp-up}, this implies that
     $$
     \sum_{j=0}^ic_{i,j}\,\e_n^{q-j}\,\fb_j^{\var}(\e_n)\geq\frac 12 \e_n^{q-i}|\mu_i|(A_{\e_n})\to +\infty \text{ as } n\to\infty.
     $$
     It follows that there is some $j\in\{0,\ldots,i\}$ and some subsequence $(\e_{n_l})_{l\in\N}$ of $(\e_n)$ such that $\e_{n_l}^{q-j}\,\fb_j^{\var}(\e_{n_l})\to+\infty$ as $l\to\infty$. Hence $q\leq \lbex_j(A)\leq \max_{j\leq i} \lbex_j(A)$. Since $q<\lsex_i(A)$ was arbitrary, we conclude that $\lsex_i(A)\leq \max_{j\leq i}\lbex_j(A)$, proving the second inequality in \eqref{eq:s<max}.

   For the proof of \eqref{eq:m<smax}, we employ the relation \eqref{eq:bound-basic-up} in Proposition~\ref{prop:invert-rela}. Fix $i\in I_d$.

 To see the first inequality in \eqref{eq:m<smax}, let $q>\max_{j\leq i}\usex_j(A)$. Then, by definition of $\usex_j$, one has, for each $j=0,\ldots, i$, $\e^{q-j} |\mu_j|(A_\e)\to 0$, as $\e\to 0^+$. By  \eqref{eq:bound-basic-up}, this implies that
     $$
     \e^{q-i}\fb^{\var}_i(\eps)\leq 2\sum_{j=0}^i|b_{i,j}| \e^{q-j}\,|\mu_j|(A_{\e})\to 0 \text{ as } \e\to 0^+,
     $$
     and hence $q\geq \ubex_i(A)$. Since $q>\max_{j\leq i}\usex_j(A)$ was arbitrary, we conclude that $\max_{j\leq i}\usex_j(A)\geq \ubex_i(A)$ as asserted.

     For the second inequality in \eqref{eq:m<smax}, there is nothing to prove in case $\lbex_i(A)=-\infty$. So assume $\lbex_i(A)\neq-\infty$ (which, by Theorem~\ref{thm:basic-exp}, implies $\lbex_i\geq i$).  Let $q< \lbex_i(A)$. Then, by definition of $\lbex_i$, there is a null sequence $(\e_n)$ such that $\e_n^{q-i} \fb_i^{\var}(A,{\eps_n})\to +\infty$ as $n\to\infty$. By \eqref{eq:bound-basic-up}, this implies that
     $$
     \sum_{j=0}^i|b_{i,j}| \e_n^{q-j}\,|\mu_j|(A_{\e_n})\geq \frac 12 \e_n^{q-i} \fb_i^{\var}(A,{\eps_n})\to +\infty
     \text{ as } n\to\infty.
     $$
     It follows that there is some $j\in\{0,\ldots,i\}$ and some subsequence $(\e_{n_l})_{l\in\N}$ of $(\e_n)$ such that $\e_{n_l}^{q-j}\,|\mu_j|(A_{\e_{n_l}})\to+\infty$ as $l\to\infty$. Hence $q\leq \lsex_j(A)\leq \max_{j\leq i} \lsex_j(A)$. Since $q<\lbex_i(A)$ was arbitrary, we conclude that $\lbex_i(A)\leq \max_{j\leq i}\lsex_j(A)$, proving the second inequality in \eqref{eq:s<max}.

    The `$\leq$'-relation in the first equation in \eqref{eq:sd-1-dim} follows directly from \eqref{eq:s_d-1_dimout} in Proposition \ref{prop:s_d-1}: $\udimout_M(A)=\usex_{d-1}(A)\leq \max_{i\in I_d} \usex_i(A)$. For the reverse relation, combine equation \eqref{eq:dim_max_m} in Theorem~\ref{thm:basic-exp} with \eqref{eq:s<max}:
    $$
    \max_{i\in I_d} \usex_i(A)\leq\max_{i\in I_d} \max_{j\leq i} \ubex_j(A)=\max_{i\in I_d} \ubex_i(A)=\udimout_M(A).
$$
    The same argument works for the lower exponents $\lsex_i$, showing the second relation stated in \eqref{eq:sd-1-dim}. (Note that this provides an alternative proof of the '$\leq$'-relations in \eqref{eq:s_d-1_dimout}.) The last relation in \eqref{eq:sd-1-dim} is immediate from  \eqref{eq:S-dim-rel}. Finally, from \eqref{eq:sd-1-dim} it is now also transparent that $\usex_i(A)\leq\udimout_M A$ holds for any $i\in I_d$, completing the proof of \eqref{eq:s_i-ineq} and thus of Theorem~\ref{thm:support-exp}.
\end{proof}

Basic contents (in case they exist) may be used to determine support contents as well as the outer Minkowski content.

\begin{theorem} \label{thm:SC-BC-rel}
	Let $A\subseteq\R^d$ be a compact set and $i\in I_d$.
	Assume that for some $q\in\R$ the $q$-dimensional basic  contents $\BC^q_j(A)$ of order $j$ exist (in $\R$) for all $j\in\{0,\ldots,i\}$.
	Then, the ($q$-dim.) support content of order $i$ exists and is given by
	\begin{equation}
		\SC_i^q(A)=\sum_{j=0}^ic_{i,j}\mathcal{M}_j^q(A),
	\end{equation}
where the constants $c_{i,j}$ are as in \eqref{eq:cij}.
    Moreover, if the basic contents $\BC^q_j(A)$ exist for all $j\in I_d$, then the outer Minkowski content exists and is given by
    \begin{equation} \label{eq:Mink-BC-rel}
		\Minkout{q}(A)=\frac{1}{d-q}\sum_{j=0}^{d-1}\omega_{d-j}\mathcal{M}_j^q(A).
	\end{equation}
\end{theorem}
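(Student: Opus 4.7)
For the first assertion, my plan is to start directly from the Steiner-type representation \eqref{eq:mu-parallel}, which identifies the $i$-th tube-support function with an explicit linear combination of the basic functions of $A$:
\begin{equation*}
    \mu_i(A_\e)=\sum_{j=0}^i c_{i,j}\,\e^{i-j}\fb_j(\e).
\end{equation*}
Multiplying both sides by $\e^{q-i}$ yields
\begin{equation*}
    \e^{q-i}\mu_i(A_\e)=\sum_{j=0}^i c_{i,j}\,\e^{q-j}\fb_j(\e).
\end{equation*}
By hypothesis, each of the $i+1$ summands on the right has a limit as $\e\to 0^+$, namely $c_{i,j}\BC^q_j(A)$, so by linearity the left-hand side converges to $\sum_{j=0}^i c_{i,j}\BC^q_j(A)$. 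By Definition~\ref{def:support-content} this common value is $\SC^q_i(A)$, which proves the first claim.

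For the outer Minkowski content formula \eqref{eq:Mink-BC-rel}, I would invoke the general Steiner formula \eqref{eq:v-par}:
\begin{equation*}
    V(A_\e\setminus A)=\sum_{i=0}^{d-1}\omega_{d-i}\int_0^\e t^{d-i-1}\fb_i(t)\,\d t,
\end{equation*}
multiply by $\e^{q-d}$, and reduce, for each $i\in I_d$ separately, the problem to establishing
\begin{equation*}
    \e^{q-d}\int_0^\e t^{d-i-1}\fb_i(t)\,\d t\;\xrightarrow[\e\to 0^+]{}\;\frac{\BC^q_i(A)}{d-q}.
\end{equation*}
To do this I would rewrite $t^{d-i-1}\fb_i(t)=t^{d-q-1}g_i(t)$ with $g_i(t):=t^{q-i}\fb_i(t)$. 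By hypothesis $g_i(t)\to \BC^q_i(A)$ as $t\to 0^+$, so $g_i$ is bounded on some interval $(0,\e_0]$. A standard $\e$-$\delta$ argument then finishes: given $\delta>0$, pick $\e_0$ with $|g_i(t)-\BC^q_i(A)|<\delta$ on $(0,\e_0]$, split the integral as the principal part (which equals $\BC^q_i(A)\,\e^{d-q}/(d-q)$ after using $\int_0^\e t^{d-q-1}\d t=\e^{d-q}/(d-q)$) plus an error term bounded by $\delta\,\e^{d-q}/(d-q)$. Summing the resulting limits over $i=0,\ldots,d-1$ weighted by $\omega_{d-i}$ and dividing by $(d-q)$ produces precisely \eqref{eq:Mink-BC-rel}.

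The main obstacle is rather mild: it is this integral-asymptotics step, where the pointwise convergence of $g_i$ must be transferred to convergence of a weighted integral average of Cesàro type. The argument requires the weight $t^{d-q-1}$ to be integrable near $0$, i.e.\ $q<d$, which is implicit in the statement since otherwise $\Minkout{q}(A)$ degenerates ($V(A_\e\setminus A)\to 0$ as $\e\to 0^+$). Apart from this, the proof is essentially book-keeping built on the two Steiner-type formulas \eqref{eq:v-par} and \eqref{eq:mu-parallel} already established, combined with the assumed existence of the basic contents.
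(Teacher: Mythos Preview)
Your proof of the first assertion is essentially identical to the paper's: multiply \eqref{eq:mu-parallel} by $\e^{q-i}$ and pass to the limit.

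For the second assertion, however, you take a genuinely different route. The paper does not go back to the Steiner formula~\eqref{eq:v-par}; instead it simply applies the first assertion with $i=d-1$ to obtain $\SC_{d-1}^q(A)=\sum_{j=0}^{d-1}c_{d-1,j}\BC^q_j(A)$, and then invokes the last assertion of Proposition~\ref{prop:s_d-1}, which says that existence of $\SC_{d-1}^q(A)$ (for $q\in[0,d)$) forces existence of $\Minkout{q}(A)$ with $\Minkout{q}(A)=\frac{2}{d-q}\SC_{d-1}^q(A)$. Unwinding the constants $c_{d-1,j}=\tfrac{\omega_{d-j}}{2}$ gives~\eqref{eq:Mink-BC-rel}. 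Your approach, by contrast, bypasses Proposition~\ref{prop:s_d-1} entirely and proves the integral asymptotics directly from~\eqref{eq:v-par} via a Ces\`aro-type argument. This is correct and more self-contained---Proposition~\ref{prop:s_d-1} itself rests on nontrivial results from \cite{RaWi2010}---at the cost of redoing work that is morally already encoded there. Both arguments need $q<d$, which you correctly flag; the paper's route needs it because Proposition~\ref{prop:s_d-1} is stated for $s\in[0,d)$.
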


\begin{proof}
	The statement follows by multiplying $\eps^{q-i}$ in equation \eqref{eq:mu-parallel} and taking the limit as $\eps\to 0^+$, recalling the basic and support contents from Definitions \ref{def:reach-measure} and \ref{def:support-content}, respectively. The assumed existence of the limits on the right of the equation implies the existence of the limit on the left.
    The second assertion follows from the first one (choose $i=d-1$) and the last assertion in Proposition~\ref{prop:s_d-1}, recalling the definition of the constants $c_{i,j}$.
\end{proof}

Equation \eqref{eq:Mink-BC-rel} provides a decomposition of the outer Minkowski content into a sum, in which each summand describes the individual contribution of a single support measure to the Minkowski content. The summand related to the $i$-th support measure may be (vaguely) interpreted as the contribution of the `\emph{$i$-dimensional features}' of the set $A$ to its Minkowski content. In some examples this can be made precise, see Section~\ref{sec:ex}. Of course, the interesting choice for the exponent $q$ in \eqref{eq:Mink-BC-rel} is $q=\dimout_M A$. In the sum on the right of \eqref{eq:Mink-BC-rel}, only those basic contents are relevant, for which the corresponding basic exponents $\bex_i(A)$ equal $\dimout_M A$ (and are thus maximal by \eqref{eq:dim_max_m}). For sets $A$ in $\R^2$, for instance, equation \eqref{eq:Mink-BC-rel} simplifies to
 \begin{equation} \label{eq:Mink-BC-rel2}
		\Minkout{q}(A)=\frac{2}{2-q}\left(\pi\BC^q_0(A)+\BC^q_1(A)\right).
	\end{equation}
Depending on whether the outer Minkowski dimension $q:=\dimout_M(A)$ satisfies $q=\bex_1(A)>\bex_0(A)$, $q=\bex_0(A)>\bex_1(A)$ or $q=\bex_1(A)=\bex_0(A)$, the outer Minkowski content $\Minkout{q}(A)$ will be determined by either $\BC^q_1(A)$, $\BC^q_0(A)$, or a linear combination of these two basic contents. In Section~\ref{sec:ex}, we discuss examples for all three situations, see Examples~\ref{ex:SG}, \ref{ex:FD} and \ref{ex:FW}, respectively.

While the support content $\SC^q_{d-1}(A)$ of order $d-1$ is directly related to the outer Minkowski content, other support contents (of order $i\neq d-1$) should be compared to \emph{fractal curvatures}. We briefly recall fractal curvatures and associated curvature scaling exponents, which are
defined very analogously to support contents just with the support measures replaced by the curvature measures; see below. A difficulty is that often curvature measures are  not defined for \emph{all} parallel sets $A_\e$ of a given compact set $A\subset\R^d$. Note that for sets in $\R^d$ with $d\leq 3$, curvature measures $C_0(A_\e,\cdot),\ldots, C_{d-1}(A_\e,\cdot)$ are well-defined for Lebesgue almost all radii $\e>0$, see e.g.\ \cite{Zahle2011} or \cite[Ch.~10]{RZ19}. In dimensions $d>3$, this is put as an assumption.
Then one considers (upper or lower) essential limits instead of limits, meaning that the Lebesgue null set of radii $\e>0$ for which $A_\e$ does not admit curvature measures, is taken out. As before, we write $C_i(A_\e):=C_i(A_\e,\R^d\times\eS^{d-1})$ for the $i$-th total curvature and $C_i^{\var}(A_\e):=C_i^{\var}(A_\e,\R^d\times\eS^{d-1})$ for the total mass of the total variation measure of $C_i(A_\e,\cdot)$.
 A better name for fractal curvatures in the present context would probably be `\emph{curvature contents}'.

\begin{definition}[Fractal curvatures of a compact set]\label{def:fr-curv}
Let $A\subseteq\R^d$ be a nonempty compact set such that, for Lebesgue a.a.\ $\e>0$, $A_\e$ admits curvature measures. For any $i\in I_d$ and $q\in\R$, the {\em ($q$-dimensional)  $i$-th upper fractal curvature of $A$} is defined by
\begin{equation}
	\uFC_i^q(A):=\esslimsup_{\e\to0^+}\e^{q-i}C_i(A_{\e}),
	\end{equation}
	and  its total variation analog by
	\begin{equation}
	\uFC_i^{\var,q}(A):=\esslimsup_{\e\to0^+}\e^{q-i}C_i^{\var}(A_{\e}).
	\end{equation}
	Lower variants are introduced by replacing the essential upper limits by essential lower limits, and denoted by $\lFC_i^q(A)$ and $\lFC_i^{\var,q}(A)$, respectively.
	Moreover, the {\em upper $i$-th curvature scaling exponent} $\ucex_i(A)$ of $A$ is defined by
	\begin{equation}
		\label{eq:curv-exp}
		\ucex_i:=\ucex_i(A):=\inf\{q\in\R:\uFC_i^{\var,q}(A)=0\}=\sup\{q\in\R:\uFC_i^{\var,q}(A)=+\iy\},
	\end{equation}
	and its {\em lower} counterpart $\lcex_i=\lcex_i(A)$ by replacing $\uFC_i^{\var,q}(A)$ with $\lFC_i^{\var,q}(A)$.
	
	If $\ucex_i=\lcex_i$, then we denote the common value by $\cex_i$ and call it the {\em $i$-th curvature scaling exponent} of $A$. If, in addition, $\uFC_i^{\cex_i}(A)=\lFC_i^{\cex_i}(A)$, then  we denote the common value by $\FC_i^{\cex_i}(A)$ and call it the {\em ($\cex_i$-dimensional) $i$-th  fractal curvature of $A$}. 
\end{definition}
The existence of fractal curvatures $\FC^q_i(F)$ has been studied extensively in particular for self-similar sets, see e.g.~\cite{Wi08,Zahle2011}, and the question has been raised, whether there exist some connections with fractal tube formulas as studied in \cite{LF12,FZF}. Let us clarify the connection with support contents, for which the relation \eqref{eq:mu-C-rel} is crucial. To avoid any technicalities, let us assume here that $A$ is such that for almost all $\e>0$, the parallel set $A_\e$ is an UPR-set. Then for such $\e$ one has, by \eqref{eq:mu-C-rel},
\begin{align}
  \label{eq:mu-C-rel2}
  \mu_i(A_\e;\cdot)=C_i(A_{\e};N(A_{\e})\cap\cdot),
\end{align}
which implies in particular
$$
|\mu_i|(A_\e)\leq C_i^{\var}(A_{\e}).
$$
From this it is easy to deduce that, for any $i\in I_d$,
\begin{align*}
  \usex_i(A)&\leq \ucex_i(A) &\text{and} &&  \lsex_i(A)&\leq \lcex_i(A).
\end{align*}
Since support contents and fractal curvatures are signed quantities, it is not possible to conclude corresponding inequalities for them (only the variation analogues satisfy $\uSC^{\var,q}_i(A)\leq \uFC^{\var,q}_i(A)$ for any $q$), but it is very clear from \eqref{eq:mu-C-rel2} that fractal curvatures can capture more geometric information not contained in the support measures.
To illustrate how much more information may be captured in fractal curvatures, we briefly discuss the Sierpi\'nski gasket $SG$. It is shown below in Example~\ref{ex:SG} that $\bex_0(SG)=0$, which implies $\sex_0(SG)=0$, see Remark~\ref{rem:b0-s0-rel}. (In fact, the measure $\mu_0(SG;\cdot)$ is only supported by the three vertices of the initial triangle of the construction and does not see the inner structure of $SG$, resulting in $|\mu_0|(SG_{\e})=\mu_0(SG_\e)=1$ for any $\e>0$.)
In contrast, the curvature measures $C_0(SG_\eps,\cdot)$ have a rich structure. They are defined for all $\e>0$ (as all the parallel sets $SG_\e$ are polyconvex).  It can be deduced from \cite[Ex.~2.4.1]{Wi08} that, for $q:=\dimout_M(SG)=\log_2 3$, $\e^{q}C_0^{\var}(SG_\e)$ is bounded from above and below by positive constants as $\e\to 0^+$ and that therefore $\cex_0(SG)=q>0$. Hence the curvature measures $C_0(SG_{\e},\cdot)$ are so much richer in structure that even the scaling exponent $\cex_0$ is strictly larger than the corresponding support scaling exponent $\sex_0$. One can show that the limit $\FC^q_0(SG)$ does not exist due to oscillations ($SG$ is a lattice self-similar set) but that upper and lower fractal curvatures exist.

Proposition~\ref{prop:invert-rela} allows to obtain a certain converse to Theorem~\ref{thm:SC-BC-rel}. The basic contents can also be expressed in terms of the support contents (in case the latter exist).

\begin{theorem} \label{thm:BC-SC-reverse}
	Let $A\subseteq\R^d$ be a compact set and $i\in I_d$.
	Assume that for some $q\in\R$ the $q$-dimensional support  contents $\SC^q_j(A)$ exist (in $\R$) for all $j\in\{0,\ldots,i\}$.
	Then, the ($q$-dim.) basic content of order $i$ exists (in $\R$) and
	\begin{equation}
		\BC_i^q(A)=\sum_{j=0}^ib_{i,j}\SC_j^q(A),
	\end{equation}
    where the coefficients $b_{i,j}$ are given by \eqref{eq:b_ij}.
\end{theorem}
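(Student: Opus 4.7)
The proof proposal mirrors exactly that of Theorem~\ref{thm:SC-BC-rel}, but now I would exploit the inverted Steiner-type relation \eqref{eq:basic_from_support} established in Proposition~\ref{prop:invert-rela}. Recall that this proposition, obtained by applying Cramer's rule to the triangular linear system arising from \eqref{eq:mu-parallel}, gives for every $\eps>0$ and every $i\in I_d$ the identity
$$
\fb_i(A;\eps)=\sum_{j=0}^i b_{i,j}\,\eps^{i-j}\,\mu_j(A_\eps),
$$
with coefficients $b_{i,j}$ as in \eqref{eq:b_ij} which depend neither on $A$ nor on $\eps$. This is the key tool that converts information about the tube-support functions $\eps\mapsto\mu_j(A_\eps)$ into information about the basic functions $\fb_i(A;\cdot)$.

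The plan is then as follows. First, I would multiply both sides of the identity above by $\eps^{q-i}$, obtaining
$$
\eps^{q-i}\fb_i(A;\eps)=\sum_{j=0}^i b_{i,j}\,\eps^{q-j}\,\mu_j(A_\eps).
$$
The left-hand side is precisely the quantity whose upper and lower limits as $\eps\to 0^+$ define $\uBC_i^q(A)$ and $\lBC_i^q(A)$, respectively (cf.\ \eqref{eq:upper-basic}). On the right-hand side, the standing assumption that $\SC_j^q(A)$ exists in $\R$ for every $j\in\{0,\ldots,i\}$ means that each term $\eps^{q-j}\mu_j(A_\eps)$ converges to $\SC_j^q(A)\in\R$ as $\eps\to 0^+$. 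Since the right-hand side is a finite $\R$-linear combination of such convergent expressions, with coefficients $b_{i,j}$ independent of $\eps$, the limit of the right-hand side exists and equals $\sum_{j=0}^i b_{i,j}\SC_j^q(A)\in\R$.

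Consequently, the limit of $\eps^{q-i}\fb_i(A;\eps)$ as $\eps\to 0^+$ also exists in $\R$, which is equivalent to $\lBC_i^q(A)=\uBC_i^q(A)$. Therefore $\BC_i^q(A)$ is well-defined (in $\R$) and coincides with $\sum_{j=0}^i b_{i,j}\SC_j^q(A)$, as claimed. I do not foresee any serious obstacle: the real work has already been carried out in Proposition~\ref{prop:invert-rela}, where the invertibility of the triangular system $C\mathrm{x}=\mathrm{y}$ and the explicit form of $b_{i,j}$ were established; at the level of contents, Theorem~\ref{thm:BC-SC-reverse} is simply the reading of that linear relation in the limit $\eps\to 0^+$, which is justified because the sum has only finitely many terms, each with a finite limit by hypothesis.
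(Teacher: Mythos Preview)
Your proof is correct and follows exactly the same route as the paper: multiply \eqref{eq:basic_from_support} by $\eps^{q-i}$ and pass to the limit, using that the right-hand side is a finite linear combination of terms whose limits exist by hypothesis. The paper's proof is stated in two sentences, but your more detailed write-up expands precisely the same argument.
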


\begin{proof}
	The statement follows by multiplying $\eps^{q-i}$ in equation \eqref{eq:basic_from_support} in Proposition~\ref{prop:invert-rela} and taking the limit as $\eps\to 0^+$. The assumed existence of the limits on the right of the equation implies the existence of the limit on the left.
\end{proof}

\section{Examples} \label{sec:ex}

 In the section, the possible behavior of basic exponents and contents is examined through several examples in $\R^2$, which demonstrate how these functionals are computed for specific sets. The first examples are nonfractal. The circle considered in Example~\ref{ex:circle} illustrates that the measure $\mu_0$ can vanish. The frames in Example~\ref{ex:bdsq} demonstrate the difference between curvature measures and support measures.
These examples exhibit typical nonfractal behavior:  each $\bex_i$ exists and equals either $i$ or $-\infty$.  In contrast,
for fractals, at least one of the basic exponents $\bex_i$ shows a different behavior.  In $\R^2$, it is e.g.\ possible that $\bex_0=0$ and $\bex_1=\dimout_M>1$, as observed below for the Sierpi\'nski gasket (see Example~\ref{ex:SG}). It demonstrates that the measure $\mu_1$ can be responsible for the `fractality'.
But one can also have $\bex_0=\bex_1(=\dimout_M)$  (see the \emph{fractal windows} in Example~\ref{ex:FW}) and $\bex_1<\bex_0(=\dimout_M)$ (see the \emph{enclosed fractal dusts} in Example \ref{ex:FD}). Hence the Minkowski dimension can also be determined by the support measure $\mu_0$ alone or jointly by the measures $\mu_0$ and $\mu_1$, see also Remark~\ref{rem:frac-ex}.

We also address the support scaling exponents. Recall that, for sets in $\R^2$, these are easily deduced from the basic exponents; see Remark \ref{rem:increasing-exp}.

Recall that for any compact set $A\subset\R^2$ and $\sH^{1}$-a.a.\ $(x,u)\in N(A)$,
\begin{equation}\label{eq:H01}
	H_0(A,x,u)=\frac{1}{\sqrt{1+k_1(A,x,u)^2}} \quad\textrm{and}\quad H_1(A,x,u)=\frac{k_1(A,x,u)}{\sqrt{1+k_1(A,x,u)^2}},
\end{equation}
where $k_1(A,x,u)$ denotes the generalized principal curvature of $A$ at $(x,u)$ .
In case of a smooth manifold, the unit normal $u$ is unique at each foot point $x$, and the generalized principal curvatures coincide with the classical ones.
We start by discussing the basic contents and exponents for a disc and a circle in $\R^2$ of radius $R>0$.

\begin{examp}[Disc of radius $R$]\label{ex:disc}
Let $B:=B(0,R)$ be the disc in $\R^2$ with radius $R$ centered at the origin. 
Then $N(B)=\{(x,u): x\in\partial B, u=x/|x|\}$ and the generalized principal curvature at each $(x,u)\in N(B)$ is given by $k_1(B,x,u)=1/R$. Hence, $$
H_0(B,x,u)=\frac{R}{\sqrt{1+R^2}}=:H_0(B) \text{ and }H_1(B,x,u)=\frac{1}{\sqrt{1+R^2}}=:H_1(B)
$$
are both constant.  Moreover, the reach of $B$ is infinite.
Combining equations \eqref{eq:m_i} and \eqref{eq:support-formula}, one can now easily obtain the basic functions of $B$ (note also that $\omega_1=2$ and $\omega_2=2\pi$):
\begin{equation}\label{eq:M0B}
	\fb_0(t)=\int_{N(B)}\mu_0(B,\d (x,u))=\frac{H_1(B)}{\o_2}\int_{N(B)}\sH^1(\d(x,u))=1
\end{equation}
and
\begin{equation}\label{eq:M1B}
	\fb_1(t)=\int_{N(B)}\mu_1(B,\d (x,u))=\frac{H_0(B)}{\o_1}\int_{N(B)}\sH^1(\d(x,u))=R\pi.
\end{equation}
Here we used the fact that $\sH^1(N(B))=2\pi\sqrt{1+R^2}$. Indeed, one can identify $N(B)$ to be a circle  embedded in $\R^4$ of radius $\sqrt{1+R^2}$ and then $\sH^1(N(B))$ is just its arc length.
More precisely, $N(B)=\{(x,u)\in\R^4:\|x\|=R,\ u=x/R\}$ which is a circle in $\R^4$ that can be parametrized by $F(\varphi):=(R\cos\varphi,R\sin\varphi,\cos\varphi,\sin\varphi)$, for $\varphi\in[0,2\pi)$, hence $\sH^1(N(B))=\int_0^{2\pi}\|\frac{\d}{\d\varphi}F(\varphi)\|\d \varphi=2\pi\sqrt{1+R^2}$.

Since $\fb_0$ and $\fb_1$ are both constant functions, no rescaling is necessary as $t\to 0^+$ and we conclude for the basic contents
that \begin{equation}
	\BC_0^0(B)=1\quad\text{ and }\quad \BC_1^1(B)=R\pi.
\end{equation}
Hence, the corresponding basic scaling exponents are $\bex_0(B)=0$ and $\bex_1(B)=1$ (in correspondence with Proposition~\ref{prop:convex}).
Now Remark~\ref{rem:b0-s0-rel} and Theorem~\ref{thm:support-exp} imply for the support scaling exponents of $B$   that $\sex_0(B)=\bex_0(B)=0$ and $\sex_1(B)=\dimout_M B=1=\max_{i\in\{0,1\}}\bex_i(B)$, and Theorem~\ref{thm:SC-BC-rel} implies for the support contents of $B$ that $\SC_0^0(B)=1$ and $\SC^1_1(B)=\BC^1_1(B)=R\pi$, since $\BC^1_0(B)=0$. This can also be verified by direct computations. $\lozenge$
\end{examp}

\begin{examp}[Circle of radius $R$]\label{ex:circle}
Let now $S:=\partial B$ be the circle in $\R^2$ of radius $R$ centered at $0$.
The calculation of the basic functions $\fb_0$ and $\fb_1$ is analogous to that for the disc $B$ but we have to take into account that each foot point $x\in  S$ has now two unit normals, one outward pointing normal $u:=u(x)={x}/{|x|}$ and one inward pointing normal $-u$. Therefore, we separate the  normal bundle into the disjoint union $N( S)=N^+\cup N^-$
with $N^+:=\{(x,u(x)): x\in S\}$ and $N^-:=\{(x,-u(x)): x\in S\}$. Observe that the reach function satisfies $\delta( S; x,u)=\infty$ for $(x,u)\in N^+$ and  $\delta( S; x,-u)=R$ for $(x,-u)\in N^-$. 
The principal curvatures satisfy the relation $k_1( S,x,u)=-k_1( S,x,-u)$, and hence we get
$$
H_1( S,x,u)=-H_1( S,x,-u)=\frac{1}{\sqrt{1+R^2}}.
$$
We infer that, for $t<R$,
\begin{equation}
	\begin{aligned}
	\fb_0(t)&=\int_{N^+}\mu_0( S,\d (x,u))+\int_{N^-}\mu_0( S,\d (x,u))\\
	&=\frac{1}{\o_2}\int_{N^+}H_1( S,x,u)\sH^1(\d(x,u))+\frac{1}{\o_2}\int_{N^-}H_1( S,x,u)\sH^1(\d(x,u))\\
	&=\frac{1}{\o_2\sqrt{1+R^2}}\left(\sH^1(N^+)-\sH^1(N^-)\right)=0,
	\end{aligned}
\end{equation}
since $\sH^1(N^+)=\sH^1(N^-)$ by symmetry. 
For $t\geq R$, the integral over $N^-$ in the above formula disappears, while the integral over $N^+$ is still present due to the infinite reach on this set. Thus we obtain
\begin{equation}
	\fb_0(t)=\1{\{t\geq R\}}\frac{1}{\o_2\sqrt{1+R^2}}\sH^1(N^+)=\1{\{t\geq R\}},
\end{equation}
since $\sH^1(N^+)=\sH^1(N(B))=2\pi\sqrt{1+R^2}$, cf.\ Example~\ref{ex:disc}, and $\omega_2=2\pi$. However, note that the total variation analog of $\fb_0$ is given by
\begin{equation}
	\fbv_0(t)=1+\1{\{t<R\}},
\end{equation}
where $\fb_0^+(t)=1$ and $\fb_0^-(t)=\1{\{t<R\}}$ are the positive and negative part of $\fb_0(t)$, respectively.
These observations imply that
\begin{equation}
	\mathcal{M}_0^0( S)=0\quad\text{and}\quad \mathcal{M}_0^{\var,0}( S)=2.
\end{equation}
We conclude that $\bex_0( S)=0$. In fact, we also get $\mathcal{M}_0^q( S)=0$ for any $q\in\R$. This particular example illustrates, why one should use the contents $\BC^{\var,q}_i( S)$ in definition \eqref{eq:reach-exp} of the $i$-th basic scaling exponent and not the basic contents  $\BC^{q}_i( S)$ themselves.
Clearly, $0$ is the correct $0$-th basic exponent here.

Let us turn now to $\beta_1$. Since $H_0( S,x,u)=\frac{R}{\sqrt{1+R^2}}=:H_0( S)$ for all $(x,u)\in N( S)=N^+\cup N^-$, we obtain
\begin{equation}\label{eq:M1paB}
	\begin{aligned}
	\fb_1(t)&=\int_{N^+}\mu_1( S,\d (x,u))+\int_{N^-}\mu_1( S,\d (x,u))\\
	&=\frac{H_0( S)}{\o_1}\int_{N^+}\sH^1(\d(x,u))+\frac{H_0( S)}{\o_1}\int_{N^-}\1{\{t<R\}}\sH^1(\d(x,u))\\
	&=(1+\1{\{t<R\}})R\pi.
	\end{aligned}
\end{equation}
Hence, $\mathcal{M}_1^1( S)=\mathcal{M}_1^{\var,1}( S)=2R\pi$ and so $\bex_1( S)=1$.
Now Theorem~\ref{thm:SC-BC-rel} can be used to deduce the (outer) Minkowski content of $ S$. Note that $\dimout_M S=1$ and so, by equation \eqref{eq:Mink-BC-rel2}, we recover
$
\BC^1(S)=\Minkout{1}(S)=2\left(\pi\BC^1_0(S)+\BC^1_1(S)\right)=4\pi R.
$ $\lozenge$

\end{examp}

\begin{examp}[Boundary of a square in $\R^2$]
\label{ex:bdsq}
Let $Q$ be the boundary of a square in $\R^2$ with side length $a>0$ and denote by $V$ the set of vertices of $Q$.
The generalized normal bundle of $Q$ can be decomposed into three parts, $N(Q)=N^{\mathrm{v}}\cup N^+\cup N^-$, where we let $(x,u)\in N^{\mathrm{v}}$ iff $x\in V$, $(x,u)\in N^+$ iff $x\notin V$ and $u$ points outwards, and  $(x,u)\in N^-$ iff $u$ points inwards.
For $(x,u)\in N^{\mathrm{v}}$, we have $\delta(Q,x,u)=\infty$ and $k_1(Q,x,u)=\infty$.
For $(x,u)\in N^+$, $\delta(Q,x,u)=\infty$ and $k_1(Q,x,u)=0$, and for $(x,u)\in N^-$, $\delta(Q,x,u)=\min\{|x-v|: v\in V\}$ and $k_1(Q,x,u)=0$.
(For the local reach in the last case note that the exoskeleton of $Q$ is the union of its two diagonals and hence $\delta(Q,x,u)$  is the distance of $x$ in direction $u$ to the closest diagonal, which equals the above minimum.)

It follows that $H_0(Q,\cdot,\cdot)$ vanishes on $N^+\cup N^-$ and so the support measure $\mu_0(Q,\cdot)$ is concentrated on $N^{\mathrm{v}}$. For $(x,u)\in N^{\mathrm{v}}$, we have $H_1(Q,x,u)=1$ and hence
\begin{equation*}
    \fb_0(t)=\frac{1}{\o_2}\int_{N^{\mathrm{v}}}\mathscr{H}^{1}(\d (x,u))=1,
\end{equation*}
since $N^{\mathrm{v}}$ consists of four unit quarter-circles in the $u$-plane (one around each vertex).

In contrast, the support measure $\mu_1(Q,\cdot)$ is concentrated on $N^+\cup N^-$ and on this set $H_0(Q,\cdot,\cdot) \equiv 1$. (Indeed, $H_0(Q,x,u)=0$ for $(x,u)\in N^{\mathrm{v}}$.)
Therefore,
\begin{equation*}
\begin{aligned}
     \fb_1(t)&=\frac{1}{\o_1}\int_{N^+\cup N^-}\1\{t<\delta(Q,x,u)\}\mathscr{H}^{1}(\d (x,u))\\
     &=2a+ (2a-4t)\,\1\{t<a/2\}.
\end{aligned}
\end{equation*}

We conclude that $\mathcal{M}_0^0(Q)=1$ and $\mathcal{M}_1^1(Q)=4a$.
In particular, this implies  $\bex_0(Q)=0$ and $\bex_1(Q)=1$. While $\mathcal{M}_1^1(Q)$ equals the length of $Q$, $\mathcal{M}_0^0(Q)$ is not equal to the Euler characteristic here, reflecting the fact that $\mu_0(Q,\cdot)$ 'sees' the outer angles but not the inner ones. In contrast, the fractal curvature $\FC_0^0(Q)$ of $Q$ (see Def.~\ref{def:fr-curv} for the definition) equals 1, and thus the Euler characteristic of $Q$. (Indeed the parallel sets $Q_\e$ are polyconvex for all $\e>0$ and thus $C_0(Q_\e)$ is well-defined (and equals 1). 

With some effort, one can show in a similar way that, for the boundary $Q$ of any $d$-dimensional hypercube in $\R^d$ and each $i\in I_d$, $\bex_i(Q)=i$ holds.
$\lozenge$
\end{examp}

Next we discuss three examples of fractal sets in $\R^2$ that demonstrate how to  apply basic functions and basic scaling exponents in fractal geometry. We start with the classical Sierpi\'nski gasket.

\begin{examp}[The Sierpi\'nski gasket] \label{ex:SG}
    Let $SG$ be the classical Sierpi\'nski gasket constructed by repeated removal of open equilateral triangles from a closed equilateral triangle $A$ of side length $1$, see Figure~\ref{fig:ex-joint}~(left). 
In each step $n\in\N$, we remove $3^{n-1}$ open triangles of side length $2^{-n}$ and $SG$ is the limit set of this procedure.

First observe that the generalized normal bundle of $SG$ satisfies $N(SG)=N(A)\cup N^-$, where $N^-$ is the set of all pairs $(x,u)$ such that $x$ is an interior point of a side of one of the removed triangles and $u$ is the unique inner unit normal of this triangle at $x$. For any $(x,u)\in N^-$, near $x$ the boundary of $SG$ is locally flat and so $k_1(SG,x,u)=0$.

It is rather straightforward to conclude that $\fb_0(SG;t)=1$ for all $t>0$, because $\mu_0(SG,\cdot)$ is concentrated on those pairs $(x,u)\in N(A)$ for which $x$ is a vertex of the initial triangle $A$. (In fact, one has $\mu_0(SG,\cdot)=\mu_0(A,\cdot)$; cf.\ also  \cite[Example 4.8]{HugLasWeil} and the calculations in Example~\ref{ex:bdsq} just above.)
Hence, one obtains immediately $\BC_0^0(SG)=1$ and, since $\mu_0(A,\cdot)$ is nonnegative, $\bex_0(SG)=0$.

The determination of $\bex_1(SG)$ is more involved.
 The integral over the subset $N(A)\subset N(SG)$ contributes a constant term to the function $\fb_1(SG;t)$, namely the value $\fb_1(A;t)=3/2$ for each $t>0$ (corresponding to half the boundary length of $A$).

So let us turn our attention to the set $N^-$.
Let  $g=1/(4\sqrt{3})$ be the {\em inradius} of the triangle removed in the first step. For $k\in\N$, denote by $\Delta_k$ the set of triangles removed at step $k$. Clearly, the cardinality of $\Delta_k$ is $3^{k-1}$ and each $T\in \Delta_k$ has side length $2^{-k}$ and inradius $g\cdot2^{-k+1}$. Hence the local reach at pairs $(x,u)\in N^{-}$ with foot point $x\in\pa T$ is at most $g\cdot2^{-k+1}$ and so the triangle $T$ will not contribute to $\fb_1(SG;t)$ for $t\geq g\cdot2^{-k+1}$. The precise contribution $\ell(T,t)$ of $T$ to $\fb_1(SG;t)$ is given by
$$
\ell(T,t)=3\cdot 2^{-k-1}-3\cot(\pi/6) t=3(2^{-k-1}-\sqrt{3} t), \text{ for } t<g\cdot2^{-k+1},
$$
which is easily seen from Figure~\ref{fig:SG}. (It is half the total length of the red segements.)  
\begin{figure}[ht]
    \includegraphics[width=0.75\textwidth]{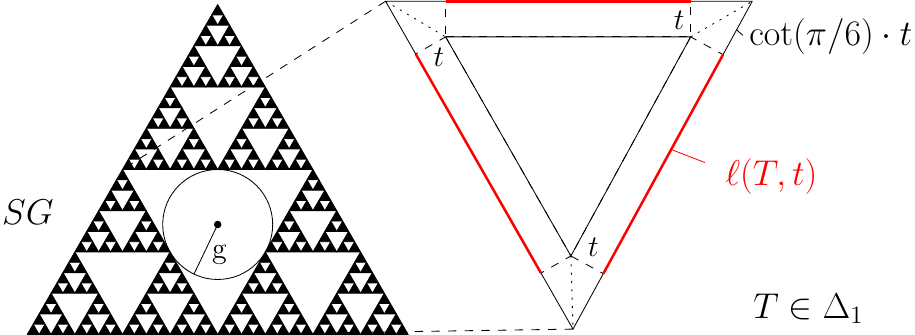}
    \caption{
    \label{fig:SG} Sierpi\'nski gasket $SG$ as discussed in Example~\ref{ex:SG}, generated from an initial equilateral triangle of sidelength 1. The largest removed triangle $T$ has inradius $g=1/(4\sqrt{3})$. The set of foot points in $\pa T$ of pairs in the generalized normal bundle of $SG$ contributing to $\beta_1(SG;t)$ is marked in red (and the contribution is given by $\ell(T,t)$).
    }
\end{figure}

Fix $n\in\N$ and let $t$ be such that $g\cdot 2^{-n}\leq t< g\cdot 2^{-n+1}$. Then triangles up to step $n$ contribute to $\fb_1(SG;t)$. More precisely, we get for such $t$
\begin{equation} \label{eq:bf-SG}
\begin{aligned}
     \fb_1(SG;t)&=\fb_1(A;t)+\sum_{k=1}^n \sum_{T\in \Delta_k} \ell(T,t)
    =\tfrac32+\sum_{k=1}^n 3^{k-1} \cdot 3(2^{-k-1}-\sqrt{3} t)\\
    &=\left(\tfrac32\right)^{n+1} - \tfrac 32 \sqrt{3} (3^n-1)\cdot t-\tfrac 34.
\end{aligned}
\end{equation}
It is now obvious that $\fb_1(SG;t)$ is bounded from above by the first term $\left(3/2\right)^{n+1}$. Our assumption $t\geq g\cdot 2^{-n}$ implies
$$
t^{1-\log_2 3}\geq g^{1-\log_2 3}\cdot 2^{-n(1-\log_2 3)}=c\cdot \left(3/2\right)^{n+1},
$$
where the constant $c:=\tfrac 23\cdot g^{1-\log_2 3}$ does not depend on $n$.
Hence, we infer that
$$
\fb_1(SG;t)\leq \left(3/2\right)^{n+1}\leq c^{-1}\cdot t^{1-\log_2 3}.
$$
Since this estimate holds for any sufficiently small $t>0$, we
conclude $\uBC_1^{\log_23}(SG)<\infty$ and so $\ubex_1(SG)\leq \log_2 3$.

To see that $D:=\log_23$ is also a lower bound for $\lbex_1(SG)$ (and hence equal to $\bex_1(SG)$), observe from \eqref{eq:bf-SG} that for all $t$ such that $g\cdot 2^{-n}\leq t< g\cdot 2^{-n+1}$
\begin{equation*} 
\begin{aligned}
     \fb_1(SG;t) &>\tfrac 32\left(\left(\tfrac 32\right)^n - \sqrt{3}\cdot 3^n\cdot t-\tfrac 12\right)\\
     &> \tfrac 32\left( \left(\tfrac 32\right)^n - \sqrt{3}\cdot g\, (\tfrac 32)^n-\tfrac 12\right)= \tfrac 32 \left( \tfrac 34 \left(\tfrac 32\right)^n -\tfrac 12\right)
     =\tfrac 12\left(\tfrac 32\right)^{n+2} -\tfrac 34.
\end{aligned}
\end{equation*}
Since $t<g\cdot 2^{-n+1}$ implies
$$
t^{1-D}<g^{1-D}\cdot 2^{-(n-1)(1-\log_23)}=g^{1-D}\cdot \left(\tfrac 32\right)^{n-1},
$$
where $g^{1-D}>0$,
and so $t^{D-1}>g^{D-1}\left(3/2\right)^{-(n-1)}$, we infer that
\begin{align*}
t^{D-1}\fb_1(SG;t)
&>g^{D-1}\left(\tfrac 32\right)^{-(n-1)}\cdot \left( \tfrac 12\left(\tfrac 32\right)^{n+2} -\tfrac 34\right)\\
&=g^{D-1}\tfrac 12\left(\left(\tfrac 32\right)^{3}-\left(\tfrac 32\right)^{-n+2}\right)
\geq\tfrac {15}{16}\cdot g^{D-1}.
\end{align*}
Hence, for all $t>0$ sufficiently small, $t^{D-1}\fb_1(SG;t)$ is bounded from below by the positive constant $15g^{D-1}/16$. This shows $\lBC^D_1(SG)>0$ and thus in particular $\lbex_1(SG)\geq D$.
We conclude that $\bex_1(SG)=D=\log_23$. It can be shown that the basic content $\BC_1^D(SG)$ does not exist as a limit. $\lozenge$
\end{examp}

Although the basic functions and exponents in the following two examples---the fractal window and the enclosed fractal dust---can be computed directly, doing so requires substantial effort.
However, these computations can be greatly simplified by employing \emph{basic zeta functions}.

{\bf Outlook: Basic zeta functions.} \label{page:zeta} In a follow-up paper \cite{RaWi2} we show, how one can connect the present results with the theory of fractal zeta functions \cite{FZF} via \emph{basic zeta functions} and how they can be used to determine the basic exponents (and, in fact, to extract much more information about a fractal set).
Here we briefly present some of the ideas.

Recall from \cite[Ch.\ 2]{FZF} that the \emph{distance zeta function} $\z_A\colon\C\to\C$ of a nonempty compact set $A\subseteq\R^d$ is defined by
\begin{equation}
	\label{eq:distance-zeta}
	\z_A(s):=\int_{A_{\e}\setminus A}\dist(z,A)^{s-d}\d z,\quad s\in\C,
\end{equation}
where $\e>0$ is fixed.
In fact, in \cite[Ch.\ 2]{FZF}, the function $\zeta_A$ is defined via integration over $A_\e$, under the assumption that $A$ has zero Lebesgue measure. In this case, the definitions are clearly equivalent and thus \eqref{eq:distance-zeta} represents a straightforward generalization to arbitrary compact sets.
\footnote{In the terminology of \cite[Ch.\ 4]{FZF}, we are working here with the relative fractal drum (RFD) $(A,\Omega)$ with $\Omega:=A_{\e}\setminus A$. Note that $\udimout_MA=\udim_M(A,A_\eps\setminus A)$, where the latter is the relative upper Minkowski dimension of the RFD $(A,A_\eps\setminus A)$.}

We recall some properties of $\zeta_A$ and refer to \cite[Ch.\ 2]{FZF} for details.
The integral \eqref{eq:distance-zeta} is absolutely convergent and defines a holomorphic function in the open half-plane $\{\re s>\udimout_MA\}.$
Furthermore, for any real $s<\udimout_MA$, the integral is divergent. Under the additional assumptions that $D:=\dimout_MA$ exists, $\lMinkout{D}(A)>0$ and $\zeta_A$ possesses a meromorphic continuation to some larger half-plane containing $D$, the number $D$ is a pole of $\zeta_A$; see \cite[Thm. 2.1.11 and 4.1.7]{FZF}.

Given any open set $W\supseteq\{\re s>\udimout_MA\}$ at which $\zeta_A$ possesses a meromorphic continuation (usually called a window), one defines the set of \emph{complex dimensions} of $A$ \emph{visible in $W$} as the set of poles of $\zeta_A$.
It is easy to see that the complex dimensions do not depend on the parameter $\e$, since varying $\eps>0$ in \eqref{eq:distance-zeta} amounts to adding an entire function.
Hence, the complex dimensions can be thought of as a generalization of the classical Minkowski dimension and although they are defined analytically via the distance zeta function, they carry some geometric meaning.
In particular, one can derive asymptotic Steiner-type formulas for $A$ expressing the tube volume $V(A_\e\setminus A)$ in terms of the complex dimensions and the corresponding residues; see \cite[Ch.\ 5]{FZF}.

We show in \cite{RaWi2} that the distance zeta function can be decomposed into a sum of \emph{basic zeta functions} arising from the support measures, that is,
\begin{equation}
		\label{eq:eqf-shell}
		\z_A(s)=\sum_{i=0}^{d-1}\o_{d-i}\breve{\z}_{A,i}(s),		
	\end{equation}
	where, for $i\in I_d$, the {\em $i$-th basic zeta function} $\breve{\z}_{A,i}$ of $A$ is defined by the following Mellin-type integral:
	\begin{equation}
		\label{eq:support-zeta}
		\breve{\z}_{A,i}(s)=\breve{\z}_{A,i}(s;\e):=\int_0^{\e}t^{s-i-1}\fb_i(t)\d t.
	\end{equation}
Here we use the same fixed $\e>0$ as in \eqref{eq:distance-zeta}. We show that the integral defining $\breve{\z}_{A,i}$ is absolutely convergent in the open half-plane $\{\re s>\ov{\bex}_i(A)\}$.

Then one can use a Mellin inversion technique, similarly as in \cite[Ch.\ 5]{FZF}, in order to reconstruct the $i$-th basic function $\fb_i(A;\cdot)$ as a generalized asymptotic series. From this representation one can deduce easily the corresponding basic exponent but also much more, like the corresponding upper and lower basic contents as well as higher order asymptotic terms and possible non-powerlike scaling laws.
By using two types of functional equations obtained in \cite{RaWi2}, we are able to obtain closed forms of the $i$-th basic zeta functions $\breve{\z}_{A,i}$ without explicit knowledge of the basic functions $\fb_i$, which demonstrates the advantage of using the zeta function technique.

\medskip

Applying this zeta function approach to the Sierpi\'nski  gasket discussed above in Example~\ref{ex:SG},  we obtain for all $t\in(0,g)$, where $g=1/(4\sqrt{3})$ is the inradius of $SG$, that
$$
\begin{aligned}
\fb_1(SG;t)&=t^{1-\log_23}\frac{3\sqrt{3}}{\log 2}\sum_{k\in\Z}\frac{(4\sqrt{3})^{-\nu_k}\mathrm{e}^{2\pi\I k\log_{1/2}t}}{\nu_k(\nu_k-1)}    +   \frac{3\sqrt{3}}{2}t,
\end{aligned}
$$
where $\nu_k:=\log_23+\frac{2\pi\I k}{\log 2}$ for all $k\in\Z$. The Fourier series appearing in the above expression converges absolutely, and hence defines a continuous, multiplicatively periodic and non-constant function $G$ on $(0,g)$.
We conclude that $\bex_1(SG)=\log_23=\dim_M(SG)$, and that the Minkowski content $\BC_1^{\log_23}(SG)$ does not exist.
Moreover, $\uBC_1^{\log_23}(SG)$ and $\lBC_1^{\log_23}(SG)$ are equal to the maximum and the minimum of $G$, respectively.

Furthermore, applying now the general Steiner formula \eqref{eq:v-par}, we will recover the fractal tube formula given in \cite[Example 5.4.12]{FZF}, i.e., an exact expression for $V(SG_t)$ in terms of the poles of $\z_A$.

\begin{examp}[The fractal window] \label{ex:FW}
We discuss a family of compact sets $A=A(r)$ in $\R^2$, depending on a parameter $r\in(0,1/2)$, for which the basic exponents are equal and attain the same prescribed value in $[1,2)$. Hence, the outer Minkowski dimension will be encoded in both of these basic exponents,
i.e., we will have $\bex_0(A)=\bex_1(A)=\dimout_M A$.
The set $A$ is an inhomogeneous self-similar set generated by four similarities with mutually equal scaling parameter $r\in(0,1/2)$.

\begin{figure}[ht]
\includegraphics[width=5cm]{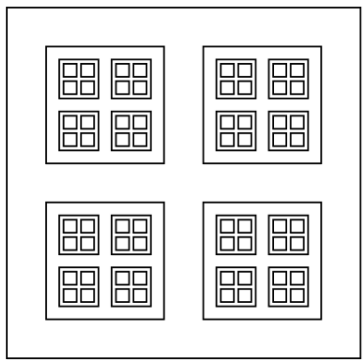}
\caption{\label{fig:window} The fourth prefractal approximation of the `fractal window' with scaling ratios equal to $r\in(0,1/2)$.
It is an example of a inhomogeneous self-similar set defined as a unique solution of the inhomogeneous equation $A=\bigcup_{i=1}^4\Phi_i(A)\bigcup Q$, where $Q$ is the boundary of the unit square and $\Phi_i$ are the 4 similarity contraction mappings.
The (outer) Minkowski dimension of the fractal window is $\max\{1,\log_{1/r}4\}$ and can attain any number in $[1,2)$. Here the set is depicted for $r=1/3$.}
\end{figure}

We start with the boundary $Q$ of the unit square $[0,1]^2$. Let us call it the `initial frame', and apply four contracting similarities $\Phi_1,\ldots,\Phi_4$ to it, which scale by $r$ and translate by $\mathbf{x}_i$, where $\mathbf{x}_1=(p,p)$, $\mathbf{x}_2=(p,2p+r)$, $\mathbf{x}_3=(2p+r,p)$ and $\mathbf{x}_4=(2p+r,2p+r)$.
Here $p:=(1-2r)/3$ is chosen such that the four smaller `frames' $\Phi_i(Q)$ are exactly distance $p$ apart from each other and from the initial frame $Q$. One then continues this process ad infinitum.
The resulting limit set $A$ is the unique solution of the inhomogeneous equation $A=\bigcup_{i=1}^4\Phi_i(A)\cup Q$, see Figure~\ref{fig:window}.

Note that the corresponding homogeneous self-similar set is a Cantor dust (satisfying the strong separation condition) and, by well-known results, its Minkowski, Hausdorff and similarity dimensions coincide and are equal to the unique real solution of the Moran equation $1-4r^s=0$, i.e., to the value $s=\log_{1/r}4$; see \cite{Hut81}.
Moreover, since, for any $r\in(0,1/2)$, the fractal window $A$ is an inhomogeneous self-similar set, it has (outer) Minkowski dimension equal to $\max\{1,\log_{1/r}4\}$; see \cite{MR3881123,MR3024316}.

Since obtaining directly the asymptotics of the basic functions of $A$ involves considerable technical effort, we give here only a rough argument on how to determine the leading behavior of the basic functions of the set $A$.
The fractal zeta function approach developed in \cite{RaWi2} provides an exact expression for the basic functions of $A$ with less computational effort, from which the basic exponents can be immediately derived.

Roughly speaking, for the set $A$, $\fb_0(A;t)$ counts (up to a constant factor) the number of vertices of the frames in $A$
that can be observed at `resolution' $t$.
This means  a vertex $x$ of some frame $Q'$ is counted if along all normal directions of $A$ at $x$ there is no point of $A$ at distance less than or equal to $2t$. (Note that there are no normals at $x$ pointing to the inside of $Q'$.)

For any fixed $t$, the vertices of the frames constructed in the $j$-th step will be counted if the spacing between the $j$-th level frames, which equals  $p\cdot r^{j-1}$, is larger than $2t$.
That is, for any $t$, the vertices up to the level $j(t):=\lfloor 1-\log_{1/4}(2t/p)\rfloor$ are counted.
Since at the $j$-th level there are exactly $4^{j+1}$ vertices, we obtain, for small $t$,
\begin{align*}
  \fb_0(A;t)\approx\sum_{j=0}^{j(t)}4^{j+1}\approx C_1\cdot 4^{j(t)+1}\approx C_2\cdot 4^{-\log_{1/r}t}=C_2t^{-\log_{1/r}4}
\end{align*}
for some constants $C_1$ and $C_2>0$.

In fact, the above heuristics are quite rough and imprecise because in reality, the leading asymptotic term of the basic function $\fb_0(A;\cdot)$ does not have a monotone behavior. In fact, it is oscillatory, which reflects the lattice type geometric self-similarity of the vertices of the set $A$.
As already mentioned, the fractal zeta function approach provides a precise asymptotic formula for $\fb_0(A;\cdot)$ from which its oscillatory nature becomes apparent:
Setting $\nu_k:=\log_{1/r}4+\frac{2k\pi\I}{\log r}$ for $k\in\Z$ and $g:=(1-2r)/(3\sqrt{2})$ (the inradius of the bounded part of $A^c$), one has, for any $t\in(0,g)$,
\begin{equation}\label{eq:fb0Ar}
\begin{aligned}
	\fb_0(A;t)=t^{-\log_{1/r}4}\,G_{0,r}(-\log_rt)-\frac{1}{3},
	\end{aligned}
\end{equation}
where $G_{0,r}$ is a nonconstant, 1-periodic, positive and bounded function on $\R$; 
see \cite{RaWi2} for the details. 
From \eqref{eq:fb0Ar} and the properties of $G_{0,r}$ one now easily concludes that $\bex_0(A)=\log_{1/r}4$ and $0<\lBC_0^{\bex_0}(A)<\uBC_0^{\bex_0}(A)<\infty$.

The heuristics for the leading asymptotic term of $\fb_1(A,\cdot)$ are as follows.
We assume additionally that $r\in(1/4,1/2)$ so that $\log_{1/r}4=\dimout_MA>1$.\footnote{
For $r\in(0,1/4)$ one needs to adapt the heuristics somewhat, since the leading asymptotic term becomes a constant ($C\cdot t^0$) in this case, reflecting the fact that the set $A$ is then 1-rectifiable.
The special case $r=1/4$ requires even more careful consideration.
}
Then $\fb_1(A;t)$
does essentially measure the total length of those frames in $A$ that are visible at resolution $t$.
That is, frames are taken into account in  $\fb_1(A;t)$ that  are least $2t$ apart from any other frame.
(In fact, we are overestimating the contribution of each frame here by considering  its total length. But Example \ref{ex:bdsq} demonstrates that the correction term we are ignoring is linear in $t$ and hence will not contribute to the leading asymptotic term in $\fb_1(A;\cdot)$.)

From the discussion of $\fb_0(A;\cdot)$ we already know that, for fixed $t$, the  frames of level $j$ will contribute if $j\leq j(t):=\lfloor 1-\log_{1/4}(2t/p)\rfloor$.
Noting that the sum of the total lengths of the frames of level $j$ is $4\cdot(4r)^j$,  we obtain, for small enough $t$,
$$
\fb_1(A;t)\approx 4\sum_{j=0}^{j(t)}(4r)^{j+1}\approx C_1\cdot (4r)^{j(t)+1}\approx C_2\cdot 4^{-\log_{1/r}t}\cdot t=C_2t^{1-\log_{1/r}4},
$$
for some constants $C_1,C_2>0$.
Similarly as for $\fb_0(A;\cdot)$, our argument is not precise enough to reveal the oscillatory nature of $\fb_1(A;\cdot)$.
In \cite{RaWi2}, an explicit formula for $\fb_1(A;\cdot)$ is derived for any choice of the parameter $r\in(0,1/2)$.
For any $r\neq1/4$, the
function $\fb_1(A;t)$ is given explicitly, for all $t\in(0,g)$, by
\begin{equation}\label{eq:fb1Ar}
\begin{aligned}
	\fb_1(A;t)=t^{1-\log_{1/r}4}\,G_{1,r}(-\log_rt)+\frac{4}{1-4r}+\frac{4}{3}t,
	\end{aligned}
\end{equation}
where $G_{1,r}$ is a nonconstant, 1-periodic function, bounded away from zero and infinity.
Examining the expression for $\fb_1$ in \eqref{eq:fb1Ar}, we conclude that the leading asymptotic term depends on the sign of $1-\log_{1/r}4$. If $r\in(1/4,1/2)$, then this sign is negative and, hence, we have
$$
\bex_0(A)=\bex_1(A)=\log_{1/r}4=\dimout_M(A),
$$
i.e., the basic exponents of $A$ exhibit the desired behavior and are equal.
In contrast, if $r\in(0,1/4)$, then the sign of $1-\log_{1/r}4$ is positive and hence,  $\bex_0(A)=\log_{1/r}4<1=\bex_1(A)=\dimout_M(A)$.
Note that in this case, the set $A$ is Minkowski measurable. It is a curve of finite length and $\BC_1^1(A)= \textrm{length}(A)=\frac{4}{1-4r}=\tfrac{1}{2}\Minkout{1}(A)$.

Without going into detail (see \cite{RaWi2}) we point out that an interesting phenomenon can be observed when $r=1/4$, in which case $\fb_1(A;t)$ possesses logarithmic terms in $t$.
In this case $\bex_0(A)=\bex_1(A)=\dimout_M(A)=1$ but $\BC_1^1(A)=\infty$, which reflects the fact that the curve $A$ has infinite length.
In fact, one might say that $r=1/4$ is the critical value of a `phase transition', at which the set $A$ changes from a (nonfractal) curve of finite length to a fractal. 
However, note also that the basic function $\fb_0$ retains information about self-similarity of $A$ for all values of $r\in(0,1/2)$.
Although $A$ is of finite length for $r\in(0,1/4)$, the set possesses a kind of `lower level' or `second order' fractality arising from its self-similarity and encoded in the basic exponent $\bex_0$.
$\lozenge$
\end{examp}

In the next example we construct compact sets $A\subset\R^2$ for which the leading asymptotic term in the general Steiner formula is governed solely by the support measure $\mu_0(A,\cdot)$, meaning that the Minkowski dimension will be determined by the $0$-th basic scaling exponent.
In other words, we will construct sets $A$ for which $\bex_1(A)<\bex_0(A)=\dim_M A$. This also shows that, in general, one does not have an increasing order relation for the basic scaling exponents (compare to Remark~\ref{rem:increasing-exp}, where such relation is discussed for the support scaling exponents $\sex_i$).
It would be interesting to characterize those compact sets $A\subseteq\R^d$ for which $\bex_0(A)\leq\bex_1(A)\leq\ldots\leq \bex_{d-1}(A)$ holds.

\begin{examp}[Enclosed fractal dust] \label{ex:FD}
The building blocks of the {\em enclosed fractal dust} are boundaries of squares (`frames'), within which points are placed equidistantly on a grid. The side lengths $\ell_j$ of the $j$-th frame and the number of points $n_j^2$ placed inside will be chosen appropriately for each $j\in\N$.
The construction that we will present can be generalized considerably by choosing different sequences $(\ell_j)_{j\in\N}$ and $(n_j)_{j\in\N}$. For instance, $(\ell_j)$ might be chosen to be the $a$-string (Minkowski measurable case) or a self-similar string \cite{LF12} in order for $A$ to become Minkowski nonmeasurable.
Here, we will choose the side lengths and the number of points in a specific way in order to guarantee that the union of all squares has finite area and that it can be perfectly packed in a larger square.

For each $j\in\N$, let $K_j$ be a closed square in $\R^2$ of some side length $\ell_j>0$. We do not specify at the moment, where exactly these squares are located, but we will assume throughout that the interiors of these squares are pairwise disjoint, that is, they form a packing.

We denote by $\mathcal L:=(\ell_j)_{j\in\N}$ the (possibly unbounded) fractal string generated by the side lengths $\ell_j$.
We impose the restriction that $\sum_{j=1}^{\infty}\ell_j^2<\infty$ or, equivalently, that $\bigcup_{j=1}^{\infty}K_j$ has finite area. Then it is possible that the squares $K_j$ are all located inside some compact set $K$ and this will be another assumption. That is, we suppose that there is a compact set $K\subset\R^2$ such that $\bigcup_j K_j\subset K$.

For each $j\in\N$, we place $n_j^2$ equidistant points in the interior of $K_j$.
More precisely, we place them at the grid points of a square lattice inside $K_j$, where the grid length is
\begin{equation}\label{eq:r_j}
	r_j:=\frac{\ell_j}{n_j+1}.
\end{equation}

\begin{figure}[ht]
	\begin{tikzpicture}[scale=3]
	\draw (0, 0) rectangle (1,1);
		\equidistantPoints{xshift=1cm}{0.69}{1}{3};
		\equidistantPoints{xshift=1cm+0.69cm}{0.56}{2}{3}
		\equidistantPoints{xshift=1cm+0.69cm+0.56cm}{0.48}{3}{3}
		\equidistantPoints{yshift=1cm}{0.43}{4}{3}
		\equidistantPoints{yshift=1cm,xshift=0.43cm}{0.39}{5}{3}
		\equidistantPoints{yshift=0.69cm,xshift=1cm}{0.37}{6}{3}
		\equidistantPoints{yshift=0.56cm,xshift=1cm+0.69cm}{0.33}{7}{3}
		\equidistantPoints{yshift=0.48cm,xshift=1cm+0.69cm+0.56cm}{0.31}{8}{3}
		\equidistantPoints{yshift=0.69cm,xshift=1cm+0.37cm}{0.29}{9}{3}
		\equidistantPoints{yshift=0.69cm+0.37cm,xshift=1cm}{0.28}{10}{3}
		\equidistantPoints{yshift=1cm+0.43cm}{0.26}{11}{3}
	\end{tikzpicture}
	\caption{The first 12 building blocks of the `Riemann string' enclosed fractal dust $A$ with $\alpha=17/32$ and $m=1$. Note that the placements depicted here will not produce a perfect packing into a square and are for visualization purposes only.
	The placement procedure for perfect packing is much more complicated; see \cite{Jan23}.\label{fig:dust}}
\end{figure}
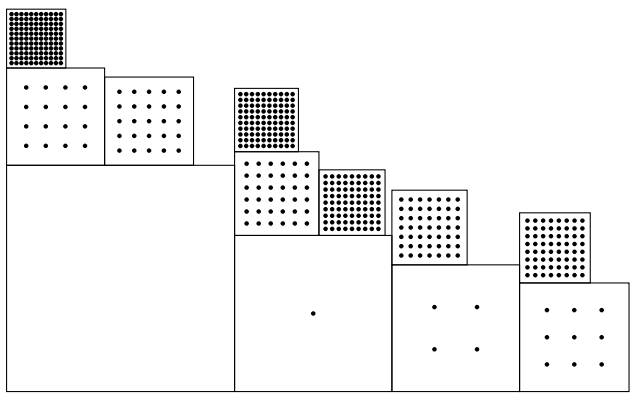

Let $A_j$ be the union of $\pa K_j$ and the $n_j^2$ points placed inside $K_j$. The \emph{enclosed fractal dust} $A$ is defined to be the closure of the union of the $A_j$, $A:=\overline{\bigcup_{j=1}^{\infty}A_j}$.

Generally, the placement of the sets $A_j$ inside $K$, that is, their relative positions, will affect the Minkowski dimension of $A$ as well as its basic exponents and contents.
Since in our construction we want the simplest possible behavior, we further restrict the choice of the fractal string $\mathcal{L}$ so that the squares $K_j$ can be perfectly packed into a larger square, in the sense that they exhaust all of its interior.

This can be done e.g.\ by choosing the fractal string $\mathcal{L}$ to be a Riemann-type string:
\begin{equation}
	\ell_j:=j^{-\alpha},\quad\textrm{where}\quad \alpha\in\left(\frac{1}{2},\frac{2}{3}\right].
\end{equation}
Indeed, it is known from \cite{MR4227795} that, for the above values of the parameter $\alpha$, a perfect packing into a single large square $K$ exists.
In this case the large square $K$ has area exactly $\sum_{j=1}^{\infty}j^{-2\alpha}=\zeta(2\alpha)$, i.e., $K$ has side length $\sqrt{\zeta(2\alpha)}$ where $\zeta$ is the Riemann zeta function.
(Note that, by \cite{PAM_doi:10.1007/s00454-023-00523-y}, for any $\alpha\in(1/2,1)$, there exists an index $N_{\alpha}\in\N$ such that the squares of side lengths $j^{-\alpha}$, $j\geq N_{\alpha}$, can be perfectly packed into a square of area $\sum_{j=N_{\alpha}}^{\infty}j^{-2\alpha}$.
Hence the current example could be easily modified to this generality if we would discard the first $N_{\alpha}-1$ squares in our construction.)

We now fix $m\in\N$ and choose the integers $n_j^2$, i.e., the number of equidistant points in each square, such that
\begin{equation}
	n_j=\ell_j^{-m/\alpha}-1=j^m-1.
\end{equation}

The basic function $\fb_0(A;t)$ of $A$ is given for any $t>0$ smaller than the inradius of the largest square by
\begin{equation}
    \begin{aligned}
        \fb_0(A;t)&=a_{1}t^{-\frac{1+2m}{\alpha+m}}+a_{2}t^{-\frac{1+m}{\alpha+m}}+a_{3}t^{-\frac{1}{\alpha+m}}+a_4,
    \end{aligned}
\end{equation}
where $a_{j}$, $j=1,\ldots,4$ are constants (depending on $m$ and $\alpha$) which are explicitly known.
This can be obtained with considerable effort by careful direct computation but is much easier derived
by using the fractal zeta functions technique from our follow-up paper \cite{RaWi2}.
Here we will only give a brief heuristic argument for the exponent in the first asymptotic term and leave it for the interested reader to go through all the details.

Roughly speaking, in our case (and up to a constant), $\fb_0(A;\cdot)$ counts the number of isolated points of $A$
that can be seen at resolution $t$, i.e., that are separated by at least $2t$ from any other point in  $A$.
The ``support'' of the measure $\mu_0(A;\cdot)$ are essentially the points inside the frames $\pa K_j$ (and the four vertices of the square $K$ which we can ignored since they only contribute to a higher order term in $\fb_0(A;\cdot)$).
For any fixed $t$, the points of the $j$-th frame will be counted if the spacing $r_j$ between the points is larger than $2t$, that is if $r_j=j^{-(\alpha+m)}>2t$.
Hence, we need to count all of the points contained in frames up to the index $j(t):=\lfloor(2t)^{-\frac{1}{\alpha+m}}\rfloor$.
So, for small $t$, we have
$$
\fb_0(A;t)\approx\sum_{j=1}^{j(t)}j^{2m}\approx\int_1^{j(t)}\tau^{2m}\d \tau\approx \frac{1}{1+2m}j(t)^{1+2m}\approx  Ct^{-\frac{1+2m}{\alpha+m}}
$$
for some constant $C>0$.

Similarly, we now give heuristics for the leading asymptotic term for $\fb_1(A;\cdot)$.
For any $t$, $\fb_1(A;t)$ essentially corresponds (up to a constant factor) to the total length of the $1$-dimensional features of the set $A$ visible at resolution $t$, that is, it measures the sum of (half the) total lengths of those frames in $A$ which are separated from their inserted points by at least $2t$.
(Again, we ignore the boundary of the large square $K$ and that the contribution of each frame is in fact not half its total length but has a linear correction, cf.~Example \ref{ex:bdsq}. Both simplifications will not affect the leading term of $\fb_1(A;\cdot)$.)
So, similarly as for $\fb_0(A;\cdot)$, for any fixed $t$, the $j$-th frame will be included, if the spacing $r_j$ between the points and the frame is less than $2t$, that is, if $j\leq j(t):=\lfloor(2t)^{-\frac{1}{\alpha+m}}\rfloor$.
Since the half the perimeter of $\pa K_j$ equals to $2\ell_j=2j^{-\alpha}$, we get
$$
\fb_1(A;t)\approx 2\sum_{j=1}^{j(t)}j^{-\alpha}\approx \frac{2}{1-\alpha}j(t)^{1-\alpha}\approx  Ct^{-\frac{1-\alpha}{\alpha+m}}=Ct^{1-\frac{1+m}{\alpha+m}}
$$
for some constant $C>0$.

However, the reconstruction by using the fractal zeta function approach gives us a precise estimate of $\fb_1(A;t)$ including higher order correction terms.
\begin{equation}
     \fb_1(A;t)=b_{1}t^{1-\frac{1+m}{\alpha+m}}+b_{2}t^{1-\frac{1}{\alpha+m}}+b_3+b_4t,
\end{equation}
where again $b_{j}$, $j=1,\ldots,4$ are constants depending on $\alpha$ and $m$ which are explicitly known. 
From these expressions for $\fb_0$ and $\fb_1$ we immediately deduce that
$$
\bex_0(A)=\frac{1+2m}{\alpha+m}>\bex_1(A)=\frac{1+m}{\alpha+m}=\bex_0(A)-\frac{m}{\alpha+m}>1,
$$
noting that $\alpha<1$.

Furthermore, it follows that the basic contents of $A$ exist and are given by
$$
\BC_0^{\bex_0}(A)=a_1 \quad \text{ and } \quad \BC_1^{\bex_1}(A)=b_1,
$$
respectively. We refer to \cite{RaWi2} for the computations and exact expressions of the constants.

In Figure \ref{fig:dust} the case $\alpha=17/32$ and $m=1$ is depicted, in which one obtains
$\dimout_M(A)=\bex_0(A)=\tfrac{96}{49}\approx 1.96$ and $\bex_1(A)=\frac{64}{49}\approx 1.31$.
Note that, by varying $\alpha\in(1/2,2/3]$ and $m\in\N$, we can obtain for $\bex_0(A)$ any value in the interval $[9/5,2)=[1.8,2)$, while $\bex_1(A)$ will always assume a value in $[6/5,4/3)=[1.2,1.\overline{3})$. In all cases, $A$ is a set with $\bex_0(A)>\bex_1(A)$. $\lozenge$
\end{examp}

\section{Subsets of \texorpdfstring{$\R$}{R}.}\label{sec:R1}

Although the support measures in \cite{HugLasWeil} are developed for dimension $d\geq 2$, one can include dimension $d=1$ into the discussion without difficulty and in fact beneficially as we briefly demonstrate. As an example we analyze the classic ternary Cantor set below.

Recall that the complement of any closed set $A\subset \R$ consists of countably many open intervals (possibly two of them unbounded). A point $x\in\partial A$ will be in the image of the metric projection onto $A$ if and only if it is an endpoint of such an interval. We write $\partial^+ A$ for the set of these endpoints. This is consistent with the notation in \cite{HugLasWeil}, where it is called the \emph{positive boundary} of $A$, cf.~\cite[p.~251]{HugLasWeil}. Note that $\partial^+ A$ is countable. Since there are only two directions in $\R$, any unit normal of $A$ at $x\in\partial^+ A$ will be either $-1$ or $+1$, depending on whether a complementary interval is located to the left or the right of $x$, respectively (or on both sides). Hence
$$
N(A)=\left\{(x,n): x\in\partial^+ A, n\in\{+1,-1\}, (x,x+n\eps]\subset A^c \text{ for some } \eps>0\right\}.
$$
In dimension $d=1$ generalized principal curvatures are not defined (and not needed). However, it is consistent with higher dimensions to define, for any closed set $A\subseteq \R$, the symmetric function $H_0(A,\cdot)$ to be equal to 1 on $N(A)$. The support measure $\mu_0(A;\cdot)$ is then defined by
\begin{equation}
\begin{aligned}
    \mu_0(A;\cdot)&:=\frac{1}{2}\int_{N(A)}\1\{(x,u)\in\cdot\}\mathscr{H}^0(\d (x,u))\\
    &\,=\frac{1}{2}\#\{x:(x,-1)\in N(A)\cap\cdot\}+\frac{1}{2}\#\{x:(x,1)\in N(A)\cap\cdot\},
    \end{aligned}
\end{equation}
where $\mathscr{H}^0(\d (x,u))$ is the counting measure on $N(A)$ and $\#$ denotes the cardinality of a set; compare also to \cite[Prop.\ 4.1]{HugLasWeil}.

Suppose now that $A\subseteq \R$ is compact. Then there exist $a_m,a_M\in A$, the minimal and maximal value of $A$, respectively, such that $A\subset[a_m,a_M]$.
Obviously, $(a_m,-1),(a_M,1)\in N(A)$ and the local reach is infinite at those points, which adds a summand 1 to the basic function $\fb_0(A;\cdot)$ of $A$.
Hence, we obtain, for any $t>0$,
\begin{equation}
\begin{aligned}
    \fb_0(A;t)=&1+\frac{1}{2}\#\left\{x\in \partial^+ A\setminus\{a_m,a_M\} : \dl(A,x,-1)>t\right\}\\
    &+\frac{1}{2}\#\left\{x\in \partial^+ A\setminus\{a_m,a_M\} : \dl(A,x,1)>t\right\}.
\end{aligned}
\end{equation}
The compact set $A$ can also be described in terms of its fractal string representation as in \cite{LF12}. Recall that the complement of $A$ consists of a countable union of bounded open intervals $I_1,I_2,\ldots$ (plus the two unbounded ones, which we do not count here). Assume that they are ordered according to their lengths (in non-increasing order). In the terminology of \cite{LF12}, the sequence of the lengths $\mathcal{L}_A=(\ell_j)_{j\in\N}$ of these intervals is called the \emph{fractal string} associated to $A$. Note that in $\mathcal{L}_A$ each length appears with the multiplicity corresponding to the number of complementary intervals of $A$ with that length.\footnote{Note that, since $A$ is compact, all the multiplicities are necessarily finite.}

Observe that each complementary interval $I_j$ of $A$ (with length $\ell_j$) has two endpoints $x_j,x_j'$ and contributes the two pairs $(x_j,-1),(x_j',1)$ to $N(A)$, both with local reach $\ell_j/2$. Those two pairs will contribute to $\fb_0(A;t)$ if and only if $|x_j-x_j'|=\ell_j>2t$.
Thus, for any $t>0$,
\begin{equation}\label{eq:b0string}
    \fb_0(A;t)=1+\#\{j\geq 1:\ell_j>2t\}.
\end{equation}
Recall now from \cite[\S1.1]{LF12} the {\em geometric counting function} $N_{\mathcal{L}_A}(x):=\#\{j:\ell_j^{-1}\leq x\}$ of the fractal string $\mathcal{L}_A$.
It is closely related to the volume of the parallel set of $A$ as well as to the theory of fractal zeta functions and complex dimensions developed in \cite{LF12}.
It is easy to see that, for any $t>0$,
\begin{equation}
    \label{eq:fb0-NL}
    \fb_0(A;t)=1+N_{\mathcal{L}_A}\left(\tfrac{1}{2t}\right)-w(2t),
\end{equation}
where $w(x)$ denotes the multiplicity of the length $x$ in the fractal string $\mathcal{L}_A$ (being equal to zero if the length $x$ does not appear in $\mathcal{L}_A$).
Note that there are at most countably many $t\in(0,+\infty)$ such that $2t$ is a length in $\mathcal{L}_A$, that is, such that $w(2t)$ is nonzero. Hence we have
$\fb_0(A;t)=1+N_{\mathcal{L}_A}\left(\tfrac{1}{2t}\right)$ almost everywhere (in Lebesgue sense).

After these considerations we can now extend the general Steiner formula \eqref{eq:v-par} to compact subsets of $\R$.
\begin{theorem}[General Steiner formula in dimension $d=1$]\label{thm:stein1}
Let $A\subseteq \R$ be a nonempty compact set and let $\mathcal{L}_A=(\ell_j)_{j\in\N}$ be the associated fractal string as defined above.
Then, for any $\e>0$ the volume of the parallel set of $A$ is given by
    \begin{equation}
    \label{eq:stein_d=1}
    V(A_\e\setminus A)=2\int_0^{\e}\fb_0(A;t)\,\d t =2\e+2\int_0^{\e}N_{\mathcal{L}_A}\left(\tfrac{1}{2t}\right)\d t,
\end{equation}
where $N_{\mathcal{L}_A}$ is the geometric counting function of $\mathcal{L}_A$.
\end{theorem}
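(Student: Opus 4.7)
My plan is to establish the first equality in \eqref{eq:stein_d=1} by a direct computation of $V(A_\e\setminus A)$ based on the fractal string decomposition of the complement of $A$, and then to recover the second equality using the identity \eqref{eq:fb0-NL} relating $\fb_0(A;\cdot)$ to the geometric counting function $N_{\mathcal{L}_A}$.

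First I would decompose $A_\e\setminus A$ by the complementary intervals of $A$. Since $A\subseteq\R$ is nonempty and compact, writing $a_m:=\min A$ and $a_M:=\max A$, the complement $\R\setminus A$ consists of the two unbounded intervals $(-\iy,a_m)$ and $(a_M,+\iy)$ together with a countable family of bounded open intervals of lengths $(\ell_j)_{j\in\N}=\mathcal{L}_A$. The intersection of $A_\e$ with each unbounded component has Lebesgue measure $\e$. For a bounded complementary interval of length $\ell_j$, every point is within distance $\e$ of $A$ when $\ell_j\leq 2\e$ (contributing $\ell_j$), while for $\ell_j>2\e$ only two sub-intervals of length $\e$ at its endpoints lie in $A_\e$ (contributing $2\e$). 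In both cases the contribution equals $\min(2\e,\ell_j)$, which gives
\begin{equation*}
   V(A_\e\setminus A)=2\e+\sum_{j\in\N}\min(2\e,\ell_j).
\end{equation*}

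Next I would rewrite $\min(2\e,\ell_j)=2\int_0^{\e}\1\{2t<\ell_j\}\,\d t$ and interchange sum and integral by Tonelli's theorem (everything is nonnegative) to obtain
\begin{equation*}
   V(A_\e\setminus A)=2\e+2\int_0^{\e}\#\{j\in\N:\ell_j>2t\}\,\d t=2\int_0^{\e}\bigl(1+\#\{j\in\N:\ell_j>2t\}\bigr)\d t.
\end{equation*}
By the expression \eqref{eq:b0string} for the basic function in dimension one, the integrand is exactly $\fb_0(A;t)$, yielding the first equality in \eqref{eq:stein_d=1}.

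For the second equality I would invoke identity \eqref{eq:fb0-NL}, namely $\fb_0(A;t)=1+N_{\mathcal{L}_A}(1/(2t))-w(2t)$, and note that the set of $t>0$ at which $w(2t)\neq 0$ is at most countable (each length $\ell_j\in\mathcal{L}_A$ appears with finite multiplicity, and the corresponding values of $t$ form a null set). Integrating \eqref{eq:fb0-NL} from $0$ to $\e$ therefore gives $\int_0^{\e}\fb_0(A;t)\,\d t=\e+\int_0^{\e}N_{\mathcal{L}_A}(1/(2t))\,\d t$, and multiplying by $2$ produces the second equality in \eqref{eq:stein_d=1}. There is no genuine obstacle here; the main care needed is the bookkeeping for the boundary pairs with infinite local reach (contributing the constant $1$ in $\fb_0$ that corresponds to the $2\e$ coming from the two unbounded components of $A^c$) and the verification that the measure-zero set where $w(2t)\neq 0$ can indeed be discarded.
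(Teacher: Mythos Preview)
Your proof is correct and is in fact more streamlined than the paper's. The paper starts from the elementary formula \eqref{eq:stein_d=1old}, namely $V(A_\e\setminus A)=2\e+2\e\cdot N_{\mathcal{L}_A}(\tfrac{1}{2\e})+\sum_{j:\ell_j<2\e}\ell_j$, and then matches it with the right-hand side of \eqref{eq:stein_d=1} by partitioning the integral $\int_0^\e N_{\mathcal{L}_A}(\tfrac{1}{2t})\,\d t$ over the intervals $(\tilde\ell_{m+1}/2,\tilde\ell_m/2)$ determined by the distinct lengths and performing a telescoping-sum computation. You instead derive $V(A_\e\setminus A)=2\e+\sum_j\min(2\e,\ell_j)$ directly from the geometry and then use the identity $\min(2\e,\ell_j)=2\int_0^\e\1\{2t<\ell_j\}\,\d t$ together with Tonelli's theorem, which bypasses the case-by-case bookkeeping entirely. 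Both arguments rest on the same underlying decomposition of the complement, but your use of the ``min as an integral'' trick makes the passage to $\fb_0$ a one-line interchange of sum and integral rather than a multi-step telescoping computation; the paper's approach has the minor advantage of making the piecewise-constant structure of the integrand (and hence the piecewise linearity of $V_A$) visible along the way.
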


\begin{remark}
    A version of Theorem \ref{thm:stein1} was already stated and proven directly in \cite[Prop.~1]{klimes2021reading} (without reference to support measures and modulo the term $2\e$ which corresponds to the outer contribution of the minimum and maximum of $A$).
    However, in \cite[Prop.~1]{klimes2021reading} it was implicitly assumed that all of the lengths of $\mathcal{L}_A$ are mutually distinct since $A$ was an orbit of a discrete dynamical system.
    This assumption is not needed in general as we show here.

    We provide a direct proof of Theorem \ref{thm:stein1} starting from  the well-known (and elementary) formula
    \begin{equation}\label{eq:stein_d=1old}
        V(A_\e\setminus A)=2\e+2\e\cdot N_{\mathcal{L}_A}\left(\tfrac{1}{2\e}\right)+\sum_{j:\ell_j<2\e}\ell_j, \quad \e>0.
    \end{equation}
   \end{remark}

   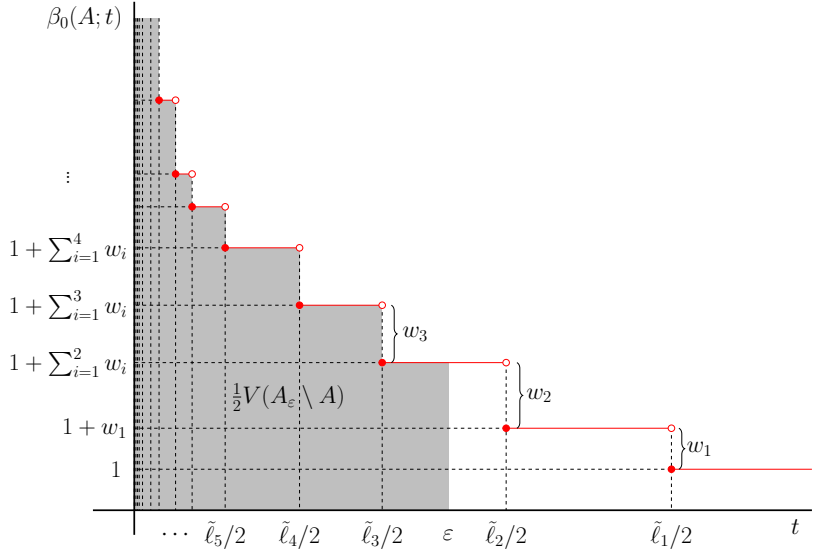
\begin{figure}[!ht]
\centering
\resizebox{0.75\textwidth}{!}{%
\begin{circuitikz}
\tikzstyle{every node}=[font=\large]

\node [font=\huge] at (25,3.25) {$t$};
\node [font=\huge] at (3.5,18.75) {$\fb_0(A;t)$};
\draw [short,red] (21.25,5) -- (25.5,5);
\fill[fill=lightgray, draw=none] (5,18.75) -- (5.75,18.75) -- (5.75,3.75) -- (5,3.75) -- cycle;
\fill[fill=lightgray, draw=none] (5.75,16.25) -- (6.25,16.25) -- (6.25,3.75) -- (5.75,3.75) -- cycle;
\fill[fill=lightgray, draw=none] (6.25,14) -- (6.75,14) -- (6.75,3.75) -- (6.25,3.75) -- cycle;
\fill[fill=lightgray, draw=none] (6.75,13) -- (7.75,13) -- (7.75,3.75) -- (6.75,3.75) -- cycle;
\fill[fill=lightgray, draw=none] (7.75,11.75) -- (10,11.75) -- (10,3.75) -- (7.75,3.75) -- cycle;
\fill[fill=lightgray, draw=none] (10,10) -- (12.5,10) -- (12.5,3.75) -- (10,3.75) -- cycle;
\fill[fill=lightgray, draw=none] (12.5,8.25) -- (14.5,8.25) -- (14.5,3.75) -- (12.5,3.75) -- cycle;
\draw [short,line width=1.5pt] (3.75,3.75) -- (25.5,3.75);
\draw [short,line width=1.5pt] (5,3) -- (5,19.25);
\draw [short,red] (16.25,6.25) -- (21.25,6.25);
\node [font=\huge] at (22.1,5.625) {$w_1$};
\draw [decorate,
	decoration = {brace,mirror,
		raise=5pt,
		amplitude=5pt}]  (21.3,5) --  (21.3,6.25);
\draw [short,red] (12.5,8.25) -- (16.25,8.25);
\node [font=\huge] at (17.25,7.25) {$w_2$};
\draw [decorate,
	decoration = {brace,mirror,
		raise=5pt,
		amplitude=5pt}]  (16.4,6.25)  --  (16.4,8.25);
\draw [short,red] (10,10) -- (12.5,10);
\node [font=\huge] at (13.5,9.125) {$w_3$};
\draw [decorate,
	decoration = {brace,mirror,
		raise=5pt,
		amplitude=5pt}]  (12.6,8.25) --  (12.6,10);
\draw [line width=0.1pt, dashed] (21.25,6.25) -- (21.25,3.75);
\node [font=\huge] at (21.25,3) {$\tilde{\ell}_1/2$};
\draw [line width=0.1pt, dashed] (16.25,8.25) -- (16.25,3.75);
\node [font=\huge] at (16.25,3) {$\tilde{\ell}_2/2$};
\draw [line width=0.1pt, dashed] (12.5,10) -- (12.5,3.75);
\node [font=\huge] at (12.5,3) {$\tilde{\ell}_3/2$};
\draw [line width=0.1pt, dashed] (10,11.75) -- (10,3.75);
\node [font=\huge] at (10,3) {$\tilde{\ell}_4/2$};
\draw [line width=0.1pt, dashed] (7.75,13) -- (7.75,3.75);
\node [font=\huge] at (7.75,3) {$\tilde{\ell}_5/2$};
\node [font=\huge] at (14.5,3) {$\eps$};
\node [font=\huge] at (9.66,7.2) {$\frac{1}{2}V(A_{\eps}\setminus A)$};
\draw [line width=0.1pt, dashed] (6.75,14) -- (6.75,3.75);
\node [font=\huge] at (6.3,3) {$\cdots$};
\draw [line width=0.1pt, dashed] (6.25,16.25) -- (6.25,3.75);
\draw [line width=0.1pt, dashed] (5.75,18.75) -- (5.75,3.75);
\draw [line width=0.1pt, dashed] (21.25,5) -- (5,5);
\draw [line width=0.1pt, dashed] (16.25,6.25) -- (5,6.25);
\draw [line width=0.1pt, dashed] (12.5,8.25) -- (5,8.25);
\draw [line width=0.1pt, dashed] (10,10) -- (5,10);
\draw [line width=0.1pt, dashed] (7.75,11.75) -- (5,11.75);
\draw [line width=0.1pt, dashed] (6.75,13) -- (5,13);
\draw [line width=0.1pt, dashed] (6.25,14) -- (5,14);
\draw [line width=0.1pt, dashed] (5.75,16.25) -- (5,16.25);
\draw [line width=0.1pt, dashed] (5.25,18.75) -- (5.25,3.75);
\draw [line width=0.1pt, dashed] (5.5,18.75) -- (5.5,3.75);
\draw [line width=0.1pt, dashed] (5.10,18.75) -- (5.10,3.75);
\draw [line width=0.1pt, dashed] (5,18.75) -- (5,3.75);
\draw [line width=0.1pt, dashed] (5.05,18.75) -- (5.05,3.75);
\draw [line width=0.1pt, dashed] (5.15,18.75) -- (5.15,3.75);
\node [font=\huge] at (4.45,5) {1};
\node [font=\huge] at (3.8,6.25) {$1+w_1$};
\node [font=\huge] at (3,8.25) {$1+\sum_{i=1}^2w_i$};
\node [font=\huge] at (3,10) {$1+\sum_{i=1}^3w_i$};
\node [font=\huge] at (3,11.75) {$1+\sum_{i=1}^4w_i$};
\node [font=\Huge] at (3,14) {$\vdots$};
\draw node [draw=red, circle, fill=white, inner sep=2pt] at (21.25,6.25) {};
\draw node [draw=red, circle, fill=red, inner sep=2pt] at (16.25,6.25) {};
\draw node [draw=red, circle, fill=white, inner sep=2pt] at (16.25,8.25) {};
\draw node [draw=red, circle, fill=red, inner sep=2pt] at (12.5,8.25) {};
\draw node [draw=red, circle, fill=red, inner sep=2pt] at (21.25,5) {};
\draw node [draw=red, circle, fill=white, inner sep=2pt] at (12.5,10) {};
\draw node [draw=red, circle, fill=red, inner sep=2pt] at (10,10) {};
\draw [short,red] (7.75,11.75) -- (10,11.75);
\draw node [draw=red, circle, fill=white, inner sep=2pt] at (10,11.75) {};
\draw node [draw=red, circle, fill=red, inner sep=2pt] at (7.75,11.75) {};
\draw [short,red] (6.75,13) -- (7.75,13);
\draw node [draw=red, circle, fill=white, inner sep=2pt] at (7.75,13) {};
\draw node [draw=red, circle, fill=red, inner sep=2pt] at (6.75,13) {};
\draw [short,red] (6.25,14) -- (6.75,14);
\draw node [draw=red, circle, fill=white, inner sep=2pt] at (6.75,14) {};
\draw node [draw=red, circle, fill=red, inner sep=2pt] at (6.25,14) {};
\draw [short,red] (5.75,16.25) -- (6.25,16.25);
\draw node [draw=red, circle, fill=white, inner sep=2pt] at (6.25,16.25) {};
\draw node [draw=red, circle, fill=red, inner sep=2pt] at (5.75,16.25) {};
\end{circuitikz}
}%

\caption{Visualization of Theorem \ref{thm:stein1} for some compact set $A\subset\R$. The graph of the basic function $t\mapsto \fb_0(A;t)$ is depicted as well as half of the volume, i.e.\ of the length $V(A_{\e}\setminus A)$ (the gray area under the graph between the abscissae 0 and $\e$). The numbers $\tilde{\ell}_m$, $m\in\N$, are the distinct lengths appearing in the fractal string $\mathcal{L}_A$ of $A$, while $w_m\in\N$ is the multiplicity of the length $\tilde{\ell}_m$ in $\mathcal{L}_A$.
Note that the graph of the counting function $t\mapsto N_{\mathcal{L}_A}(1/(2t))$ can be obtained by interchanging the white circles with the red ones and shifting the depicted graph vertically by $-1$. \label{fig:fbv0}
}
\end{figure}

\begin{proof}[Proof of Theorem \ref{thm:stein1}]
     Since the second equality in \eqref{eq:stein_d=1} follows from the fact that $\fb_0(A;t)=N_{\mathcal{L}_A}\left(\tfrac{1}{2t}\right)+1$ a.e., it is enough to show directly that the right-hand side of \eqref{eq:stein_d=1} is equal to the right-hand side of \eqref{eq:stein_d=1old}.
     To convince oneself that these two formulas agree, consider a new fractal string $\tilde{\mathcal{L}}_A:=(\tilde{\ell}_m)_{m\in\N}$ consisting of all of the distinct lengths that appear in the original string $\mathcal{L}_A$ in descending order and denote the multiplicity of the length $\tilde{\ell}_m$ in the original fractal string $\mathcal{L}_A$ by $w_m$.
    First we subdivide the last integral in \eqref{eq:stein_d=1}  as follows:
    \begin{equation}\label{eq:string-st-pom}
        \int_0^{\e}N_{\mathcal{L}_A}\left(\tfrac{1}{2t}\right)\d t=
        \sum_{m=m_0}^{\infty}\int_{{\tilde{\ell}_{m+1}/2}}^{\tilde{\ell}_{m}/2}N_{\mathcal{L}_A}\left(\tfrac{1}{2t}\right)\d t
        +\int_{{\tilde{\ell}_{m_0}/2}}^{\e}N_{\mathcal{L}_A}\left(\tfrac{1}{2t}\right)\d t.
    \end{equation}
    Here $m_0\in\N$ is the smallest integer such that $\e\geq\tilde{\ell}_{m_0}/2$.
    Note that on each subinterval $(\tilde{\ell}_{m+1}/2,\tilde{\ell}_{m}/2)$ the counting function $t\mapsto N_{\mathcal{L}_A}\left(\tfrac{1}{2t}\right)$ is constant and equal to the sum of all the weights $w_k$ for $k$ from $1$ up to $m$ (see Figure \ref{fig:fbv0}), i.e., we have
    $$
    \int_0^{\e}N_{\mathcal{L}_A}\left(\tfrac{1}{2t}\right)\d t=\frac 12\sum_{m=m_0}^{\infty}\sum_{k=1}^mw_k(\tilde{\ell}_m-\tilde{\ell}_{m+1})
    +\int_{{\tilde{\ell}_{m_0}/2}}^{\e}N_{\mathcal{L}_A}\left(\tfrac{1}{2t}\right)\d t.
    $$
    The double sum is in fact a telescoping sum and can be simplified. The counting function in the second integral is again constant on the open subinterval over which we integrate. Moreover, we have $N_{\mathcal{L}_A}\left(\tfrac{1}{2t}\right)=N_{\mathcal{L}_A}\left(\tfrac{1}{2\e}\right)=\sum_{k=1}^{m_0}w_{k}$ for any $t\in(\tilde{\ell}_{m_0}/2,\e)$. Hence
    $$
    \begin{aligned}
    \int_0^{\e}N_{\mathcal{L}_A}\left(\tfrac{1}{2t}\right)\d t&=\frac 12\sum_{k=1}^{m_0}w_k\tilde{\ell}_{m_0}
    +\frac 12\sum_{m=m_0+1}^{\infty}w_m\tilde{\ell}_m
    +\e\cdot N_{\mathcal{L}_A}\left(\tfrac{1}{2\e}\right)-\tfrac{\tilde{\ell}_{m_0}}{2}\sum_{k=1}^{m_0}w_{k}\\
    &=\frac 12\sum_{m=m_0+1}^{\infty}w_m\tilde{\ell}_m
    +\e\cdot N_{\mathcal{L}_A}\left(\tfrac{1}{2\e}\right)
    =\frac 12\sum_{j:\ell_j<2\e}\ell_j+\e\cdot N_{\mathcal{L}_A}\left(\tfrac{1}{2\e}\right),
    \end{aligned}
    $$
    where in the last step we have reinterpreted the sum in terms of the original fractal string $\mathcal{L}_A$.
    Note that the argument is also correct when $\e=\tilde{\ell}_{m_0}/2$. In this case the last integral in \eqref{eq:string-st-pom} does not appear, but instead the equality $\sum_{k=1}^{m_0}w_k\tilde{\ell}_{m_0}=2\e\cdot N_{\mathcal{L}_A}\left(\tfrac{1}{2\e}\right)$ holds. The assertion follows by multiplying 2 in the last equation, adding $2\e$ and comparing with \eqref{eq:stein_d=1old}.
\end{proof}

Theorem \ref{thm:stein1} provides directly a complete description of the differentiability of the tube function $\e\mapsto V(A_\e\setminus A)$ in dimension $d=1$.
\begin{corollary}[Differentiability of the tube function in $\R$]\label{cor:diffR1}
    Let $A\subseteq \R$ be a nonempty compact set and let ${\mathcal{L}}_A=({\ell}_j)_{j\in\N}$ be the associated fractal string.
Then the tube function $\e\mapsto V_A(\e):=V(A_\e\setminus A)$ is piecewise linear (and thus piecewise differentiable) and, for each $\e\in(0,\infty)\setminus\{\ell_j/2:j\in\N\}$,
\begin{equation}
    V_A'(\e)=2\fb_0(A;\e)=2+2N_{\mathcal{L}_A}\left(\tfrac{1}{2\e}\right).
\end{equation}
Moreover, right and left derivatives exist for all $\e>0$ and satisfy 
$$
V_A^{(+)}(\e)=2\fb_0(A;\e) \quad \text{ and } \quad 
V_A^{(-)}(\e)=2+2N_{\mathcal{L}_A}\left(\tfrac{1}{2\e}\right),
$$ respectively.
Furthermore, if $\tilde{\mathcal{L}}_A=(\tilde{\ell}_m)_{m\in\N}$ denotes the fractal string consisting of all the {\em distinct} finite lengths of the disjoint open intervals that comprise the complement of $A$, and $w_m$ the multiplicity of $\tilde{\ell}_m$ for $m\in\N$, then,
for each $\e\in(\tilde{\ell}_{m+1}/{2},{\tilde{\ell}_m}/{2})$, $$V_A'(\e)=2+2\sum_{k=1}^mw_k.$$
\end{corollary}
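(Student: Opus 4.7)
The plan is to reduce everything to Theorem~\ref{thm:stein1}, which provides the integral representation $V_A(\e) = 2\int_0^\e \fb_0(A;t)\,\d t$ for all $\e>0$. The entire corollary then follows from a careful analysis of the step-function $\fb_0(A;\cdot)$ combined with the fundamental theorem of calculus. First I would record the structure of $\fb_0(A;\cdot)$ on $(0,\infty)$: by \eqref{eq:b0string}, $\fb_0(A;t) = 1 + \#\{j : \ell_j > 2t\}$, so $\fb_0(A;\cdot)$ is nonnegative, integer-valued and nonincreasing, and its discontinuities are precisely the points $\tilde{\ell}_m/2$, $m\in\N$. On each open interval $(\tilde{\ell}_{m+1}/2,\tilde{\ell}_m/2)$ it is constant and equals $1 + \sum_{k=1}^{m}w_k$ (with the empty sum covering the region $(\tilde{\ell}_1/2,\infty)$, where $\fb_0 \equiv 1$). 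By Proposition~\ref{def:basic}, it is right-continuous, so at $\e_0 = \tilde{\ell}_m/2$ it attains the value $1+\sum_{k=1}^{m-1}w_k$ of the right-adjacent interval, while its left limit equals $1+\sum_{k=1}^{m}w_k$.

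Second, I would derive piecewise linearity of $V_A$ and the derivative formula at regular points. Since $\fb_0(A;\cdot)$ is bounded on any compact subinterval of $(0,\infty)$, $V_A$ is locally absolutely continuous, and on each interval of constancy $(\tilde{\ell}_{m+1}/2,\tilde{\ell}_m/2)$ the integrand is constant so that $V_A$ is affine with slope $2\fb_0(A;\e) = 2+2\sum_{k=1}^{m}w_k$. This simultaneously establishes the piecewise-linearity, the derivative formula $V_A'(\e) = 2\fb_0(A;\e)$ away from the set $\{\ell_j/2:j\in\N\}$, and the explicit slope formula on each constancy interval stated at the end of the corollary. Using \eqref{eq:fb0-NL} (which gives $\fb_0(A;\e) = 1 + N_{\mathcal{L}_A}(1/(2\e))$ off the exceptional set), the alternative form $V_A'(\e) = 2 + 2N_{\mathcal{L}_A}(1/(2\e))$ follows.

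Third, I would compute the one-sided derivatives at an exceptional point $\e_0 = \tilde{\ell}_m/2$ directly from the integral representation: since $\fb_0(A;\cdot)$ is constant on a one-sided punctured neighborhood of $\e_0$,
\[
V_A^{(+)}(\e_0) \;=\; 2\lim_{h\to 0^+}\tfrac{1}{h}\int_{\e_0}^{\e_0+h}\fb_0(A;t)\,\d t \;=\; 2\,\fb_0(A;\e_0^+), \qquad V_A^{(-)}(\e_0) \;=\; 2\,\fb_0(A;\e_0^-).
\]
By right-continuity, $\fb_0(A;\e_0^+) = \fb_0(A;\e_0)$, yielding $V_A^{(+)}(\e_0) = 2\fb_0(A;\e_0)$. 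For the left version, the left limit equals $1+\sum_{k=1}^{m}w_k$; identifying this with $1 + N_{\mathcal{L}_A}(1/(2\e_0))$ via a direct count (or via \eqref{eq:fb0-NL}, noting that $w(2\e_0) = w_m$ is subtracted at the jump point itself) yields $V_A^{(-)}(\e_0) = 2 + 2N_{\mathcal{L}_A}(1/(2\e_0))$.

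No step presents a genuine obstacle; the main thing to be careful about is the opposite one-sided continuity of the two representing functions. The function $\fb_0(A;\cdot)$ is right-continuous whereas $t\mapsto N_{\mathcal{L}_A}(1/(2t))$ is left-continuous at each jump point (a length $\tilde{\ell}_m$ is included in $N_{\mathcal{L}_A}(1/\tilde{\ell}_m)$ but excluded from the strict inequality defining $\fb_0$). Keeping this bookkeeping straight explains why the two expressions for the derivative in the corollary use different counting functions for the left and right derivatives, yet coincide at all non-exceptional $\e$.
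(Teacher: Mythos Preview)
Your proof is correct and follows essentially the same approach as the paper: both rely on the integral representation from Theorem~\ref{thm:stein1} and the observation that $\fb_0(A;\cdot)$ is a right-continuous step function constant on $[\tilde{\ell}_{m+1}/2,\tilde{\ell}_m/2)$ with value $1+\sum_{k=1}^m w_k$, while $t\mapsto N_{\mathcal{L}_A}(1/(2t))$ is left-continuous and constant on $(\tilde{\ell}_{m+1}/2,\tilde{\ell}_m/2]$ with value $\sum_{k=1}^m w_k$. The paper states this in two sentences, whereas you have spelled out the one-sided derivative computations more carefully; the content is the same.
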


\begin{proof}
 The assertion follows directly from \eqref{eq:stein_d=1} and the definitions of $\fb_0$ and $N_{\mathcal{L}_A}$.
    Note that $t\mapsto \fb_0(A;t)$ is constant on $[\tilde{\ell}_{m+1}/2,\tilde{\ell}_{m}/2)$ and equal to $1+\sum_{k=1}^mw_k$ while $t\mapsto N_{\mathcal{L}_A}((2t)^{-1})$ is constant on $(\tilde{\ell}_{m+1}/2,\tilde{\ell}_{m}/2]$ and equal to $\sum_{k=1}^mw_k$.
\end{proof}

Theorem \ref{thm:stein1} allows to establish a connection with the one-dimensional theory of complex dimensions in \cite{LF12}, which we briefly address now. Similarly, the general Steiner formula in Theorem \ref{thm:generalSteiner} will serve as a  bridge to the general theory of complex dimensions in $\R^d$ developed in  \cite{FZF}, which will be further elaborated in the forthcoming paper \cite{RaWi2}.

Recall that for a compact set $A\subseteq\R$, the {\em geometric zeta function} of the fractal fractal string $\mathcal{L}_A$ is defined as the Dirichlet series
\begin{equation}
    \label{eq:geo-zeta}
    \zeta_{\mathcal{L}_A}(s):=\sum_{j\in\N}\ell_j^s=\sum_{m\in\N}w_m\tilde{\ell}_m^s,
\end{equation}
absolutely convergent for all $s\in\C$ such that $\re s>\udimout_MA$, and hence, holomorphic in the corresponding half-plane; see \cite[Ch.\ 1]{LF12}.
The corresponding theory of complex dimensions then studies the poles and other singularities of $\zeta_{\mathcal{L}_A}$ giving them geometric meaning that reflects the fractality of the set $A$.
It is not difficult to see directly that $\zeta_{\mathcal{L}_A}$ can be also expressed in terms of the Mellin transform of the geometric counting function; see \cite[Proof of Thm.\ 1.17]{LF12}:
\begin{equation}
    \label{eq:geo-zeta-count}
    \zeta_{\mathcal{L}_A}(s)=s\int_0^{\infty}\tau^{-s-1}N_{\mathcal{L}_A}(\tau)\,\d \tau.
\end{equation}
Now one can use a change of variables and \eqref{eq:fb0-NL} in order to obtain a functional equation for $\zeta_{\mathcal{L}_A}$ in terms of the basic function $\fb_0(A;\cdot)$ as follows:
\begin{equation}
    \begin{aligned}
        \zeta_{\mathcal{L}_A}(s)&=2^ss\int_0^{\infty}t^{s-1}N_{\mathcal{L}_A}(\tfrac{1}{2t})\,\d t=2^ss\int_0^{\e}t^{s-1}N_{\mathcal{L}_A}(\tfrac{1}{2t})\,\d t\\
        &=-2^s\e^s+2^ss\int_0^{\e}t^{s-1}\fb_0(A;t)\,\d t,
    \end{aligned}
\end{equation}
where $\e\geq\ell_1/2$ can be taken arbitrary since $N_{\mathcal{L}_A}(\tfrac{1}{2t})=0$ for all $t>\ell_1/2$. The functional equation is valid (initially) for all $s\in\C$ such that $\re s>1$. Ultimately, this relation will allow to express the terms appearing in fractal tube formulas utilizing support measures, resulting in new geometric insights.

\medskip

Let us discuss the classical ternary Cantor set as an illustrating example.
\begin{examp}[The ternary Cantor set]
The ternary Cantor set $C$ is constructed starting from the closed unit interval by removing its (open) middle third interval in the first step and doing the same with all the remaining intervals in each step.
The fractal string $\mathcal{L}_C$ associated to $C$ contains length $3^{-k}$ with multiplicity $2^{k-1}$ for each $k\in\N$.
For $t<1/6$, one has $\#\{j\geq 1:\ell_j>2t\}=2^n-1$, where $n:=n(t):=\lceil -\log_3(2t)\rceil-1$ and $\lceil\cdot\rceil$ is the ceiling function.
Hence, for any $t<1/6$,
\begin{equation}\label{eq:fb0-pom}
\begin{aligned}
    \fb_0(C;t)&=2^n=2^{\lceil -\log_3(2t)\rceil-1}=2^{\{\log_3(2t)\}-\log_3(2t)-1}\\
    &=2^{\{\log_3(2t)\}-1}\cdot (2t)^{-\log_32} ,
\end{aligned}
\end{equation}
where $\{\cdot\}$ is the fractional part function.
We conclude that $\bex_0(C)=\log_32=\dimout_M C=\dim_M C$ as expected. The basic content $\BC_0^{\log_3 2}(C)$ does not exist as a limit due to the multiplicatively periodic prefactor $t\mapsto 2^{\{\log_3(2t)\}-1}$, while the upper and lower basic contents exist and are equal to the maximum and the minimum of this function, respectively.

We can now recover the fractal tube formula from \cite[Eq.\ (1.14)]{LF12} for the Cantor set (for $\e<1/6$) by applying the general Steiner formula from Theorem \ref{thm:stein1}, which yields that $V(C_\e\setminus C)=2\int_0^{\e}\fb_0(C;t)\,\d t$.
For computing the integral we use the Fourier series expansion of the one-periodic function $u\mapsto b^{\{u\}}$ given by $\lim_{N\to +\infty}\sum_{n=-N}^N\frac{(1-b)\E^{2\pi\I n u}}{-\log b+2\pi\I n}$ and apply it to \eqref{eq:fb0-pom} in order to obtain
\begin{equation}
    \fb_0(C;t)= \frac{(2t)^{-D}}{2\log 3}\lim_{N\to +\infty}\sum_{n=-N}^{N}\frac{(2t)^{\I n\mathbf{p}}}{D-\I n\mathbf{p}},\quad t\in(0,\tfrac{1}{6})\setminus\{3^{-k}/2:k\in\N\},
\end{equation}
where $D:=\log_32$ and $\mathbf{p}:=\frac{2\pi}{\log 3}$.\footnote{Note that the Fourier series representation of $\fb_0(C;t)$ is conditionally pointwise convergent except at points at which $\fb_0(C;\cdot)$ has jump discontinuities, i.e., at the set $\{3^{-k}/2:k\in\N\}$ where it converges to the average value of the corresponding left and right limit of $\fb_0(C;\cdot)$.}
Now we integrate the above series term by term
in order to obtain
\begin{equation}
    V(C_\e\setminus C)=\frac{(2\e)^{1-D}}{2\log 3}\sum_{n\in\Z}\frac{(2\e)^{\I n\mathbf{p}}}{(D-\I n\mathbf{p})(1-D+\I n\mathbf{p})}, \quad \e\in(0,\tfrac{1}{6})
\end{equation}
which recovers the fractal tube formula \cite[Eq.~(1.14)]{LF12} (modulo the term $2\e$ which corresponds to the contribution of the two endpoints of $C$).
Note that, by well known facts about the Fourier series, the fractal tube formula is valid pointwise for all $\e\in(0,1/6)$ and moreover, the series is absolutely and, hence, uniformly convergent on $(0,1/6)$.
In fact, the formula is actually valid on $[0,1/6]$ which can be verified directly and is in agreement with the fact that $\e\mapsto V(C_\e\setminus C)$ is a continuous function on $[0,+\infty)$.
Finally, the reader can easily convince oneself that $V(C_\e\setminus C)=1+2\e$ for $\e\in[1/6,\infty)$ which gives us a complete description of the fractal tube formula for the Cantor set $C$.
\end{examp}

Finally, we remark that even in the one-dimensional context, the outer Minkowski dimension is really the notion which is encoded in the  basic function $\fb_0$.
To clarify this, one can consider a segment in $\R$ or, more interestingly, Smith-Volterra-Cantor sets (see \cite{Smith1875} or, e.g., \cite[Example 4.92]{LaRa24}) for which the Minkowski dimension equals 1 but the outer Minkowski dimension is strictly smaller:
Fix some $a>3$. Starting from the closed unit interval, the Smith-Volterra-Cantor set $SVC_a$ is constructed by removing in the first step an open interval of length $1/a$ centered at $1/2$. Then, for each $k>1$, 
a centered open interval of length $1/a^k$ is removed from each remaining interval.
It is easy to check that  $SVC_a$ has nonzero Lebesgue measure, hence it is of Minkowski dimension 1.
On the other hand, for such a set, by using \eqref{eq:b0string} and analogous reasoning as above, one obtains that $\bex_0(SVC_a)=\log_a2=\dimout_M(SVC_a)<1=\dim_M(SVC_a)$.

\bibliographystyle{abbrv}
\bibliography{GR-bib}
\end{document}